\newtheorem{theorem}{Theorem}[section]
\newtheorem{proposition}[theorem]{Proposition}
\newtheorem{conjecture}[theorem]{Conjecture}
\newtheorem{lemma}[theorem]{Lemma}
\newtheorem{corollary}[theorem]{Corollary}
\theoremstyle{definition}
\newtheorem{assumption}[theorem]{Assumption}
\newtheorem{remark}[theorem]{Remark}
\numberwithin{equation}{section}
\definecolor{dgreen}{rgb}{0,0.5,0.2}
\newcommand{\bx}{{\bf x}}
\newcommand{\by}{{\bf y}}
\newcommand{\bz}{{\bf z}}
\newcommand{\U}{\mathcal{U}}
\newcommand{\V}{\mathcal{V}}
\newcommand{\pd}{\partial}
\newcommand{\eps}{\varepsilon}
\newcommand{\R}{\mathbb{R}}
\def\ga{\alpha}     \def\gb{\beta}       
                         \def\vge{\varepsilon}
           \def\gh{\eta}
            \def\gl{\lambda}
      \def\gw{\omega}
\def\gx{\xi}
\def\Gw{\Omega}              
\begin{document}


\title[Heat kernels in twisted tubes]{Sharp two-sided heat kernel estimates of twisted tubes and applications}

\author {Gabriele Grillo}

\address {Gabriele Grillo, Dipartimento di Matematica, Politecnico di Milano\\
              Piazza Leonardo da Vinci 32, 20133 Milano, Italy}

\email {gabriele.grillo@polimi.it}

\author {Hynek Kova\v{r}\'{\i}k}

\address {Hynek Kova\v{r}\'{\i}k, DICATAM, Sezione di Matematica, Universit\`a degli studi di Brescia\\
              Via Branze, 38, 25123 Brescia, Italy}

\email {hynek.kovarik@polito.it}

\author{Yehuda Pinchover}

\address{Yehuda Pinchover,
Department of Mathematics, Technion - Israel Institute of Technology\\
           32000 Haifa, Israel}

\email{pincho@techunix.technion.ac.il}

\thanks{G.G. has been partially supported by the MIUR-PRIN 2009 grant ``Metodi di viscosit\`a, geometrici e di controllo per modelli diffusivi nonlineari''. H.K. has been partially supported by the MIUR-PRIN 2010-11 grant for the project  ``Calcolo delle Variazioni''. Y.P. acknowledges the support of the Israel Science
Foundation (grants No. 963/11) founded by the Israel Academy of Sciences and Humanities.}



\begin {abstract}
We prove  on-diagonal bounds for    the heat kernel of the Dirichlet
Laplacian $-\Delta^D_\Omega$ in locally twisted three-dimensional
tubes $\Omega$. In particular, we show that for any fixed $x$ the heat kernel
decays for large times as $\mathrm{e}^{-E_1t}\, t^{-3/2}$, where $E_1$ is the
fundamental eigenvalue of the Dirichlet Laplacian on the cross
section of the tube. This shows that any, suitably regular, local twisting speeds up the decay of the
heat kernel with respect to the case of straight (untwisted) tubes. Moreover, the above large time decay is valid for a wide class of subcritical operators defined on a straight tube.
We also discuss
some applications of this result, such as Sobolev
inequalities and spectral estimates for
Schr\"odinger operators $-\Delta^D_\Omega-V$.
\end{abstract}

\maketitle


\section{Introduction}

\noindent Let $\omega\subset\R^2$ be an open bounded set and let
$\Omega_0 = \omega\times\R$  be a straight tube in $\R^3$. By
separation of variables it is easy to see that the heat kernel of
the Dirichlet Laplacian $-\Delta^D_{\Omega_0}$ on $\Omega_0$
satisfies
\begin{equation} \label{straight-decay}
   k(t,x,y):=    \mathrm{e}^{t E_1}\, \mathrm{e}^{t \Delta^D_{\Omega_0}}(x,y)\  \sim \  t^{-\frac
12}\, \qquad \text{as \ } t\to\infty,
\end{equation}
where $E_1$ is the principal eigenvalue of $-\Delta^D_\omega$, the
Dirichlet Laplacian on $\omega$. Let us now define the twisted tube
$\Omega$ by
$$
\Omega = \{ r_\theta(x_3)\, x\mid  x=(x_1,x_2,x_3) \in \Omega_0\},
$$
where
$$
r_\theta(x_3) = \left( \begin{array}{rcc}
\cos \theta(x_3) & \sin \theta(x_3) & 0 \\
-\sin \theta(x_3) & \cos \theta(x_3) & 0 \\
0 & 0& 1 \end{array} \right)
$$
and $\theta:\R\to\R$ is the angle of rotation. Here and in the
sequel we will denote by $x$ the variable in the straight tube
$\Omega_0$ and by $\bx$ the variable in the twisted tube $\Omega$.
We assume that the support of $\dot\theta$, the derivative  of
$\theta$, is compact, see Section~\ref{sec-prelim} for more details.
It then follows that the spectrum of $-\Delta^D_\Omega$  coincides with  the
half-line $[E_1,\infty)$. Therefore, it is convenient to work with
the shifted operator $-\Delta^D_\Omega-E_1$. This is a nonnegative
self-adjoint operator which generates a contraction, positivity
preserving semigroup $\mathrm{e}^{t(\Delta^D_\Omega+E_1)}$ on $L^2(\Omega)$.
The main object of our interest is its integral kernel
\begin{equation} \label{heat-kernel}
k(t, \bx, \by) := \mathrm{e}^{t (\Delta^D_\Omega+E_1)}(\bx,\by), \quad \bx,\,
\by \in \Omega.
\end{equation}
In particular, we are interested in the influence of twisting on the
long time behavior of $k(t, \bx, \by)$. This is motivated by the
fact that, under appropriate assumptions on $\omega$ and
$\dot\theta$, the Dirichlet Laplacian in the twisted tube $\Omega$
satisfies a Hardy-type inequality
\begin{equation} \label{hardy-operator}
-\Delta^D_\Omega-E_1 \, \geq \, \frac{c}{1+\bx_3^2}
\end{equation}
in the sense of quadratic forms, see \cite{ekk}. One of the
consequences of this inequality is the existence of a finite    positive (minimal)    Green
function of $-\Delta^D_\Omega-E_1$, see e.g.   \cite{grig05,pt}.   Using a different terminology,
the associated semigroup is transient.    On the other hand, \eqref{straight-decay} implies that the associated semigroup of $-\Delta^D_{\Omega_0}-E_1$ corresponding to  the straight tube
is recurrent (see also \cite{f}).    In other
words, inequality \eqref{hardy-operator} implies that $\textstyle\int_0^\infty\, k(t, \bx, \by)\, \mathrm{d}t < \infty$ for all $
\bx\neq \by$,    while the recurrency of $-\Delta^D_{\Omega_0}-E_1$ means that $\textstyle\int_0^\infty\, k(t, x, y)\, \mathrm{d}t = \infty$.  Moreover, since $k(t, \bx, \by)$ and $k(t, \bx, \bx)$ are pointwise equivalent for all $t\geq 1$    \cite[Theorem~10]{da97},   and since $k(t, \bx, \bx)$ is nonincreasing in $t$ it follows that
\begin{equation} \label{green}
k(t, \bx, \bx) = o(t^{-1}) \qquad \text{as \ } t\to\infty.
\end{equation}
This means that the heat kernel of the twisted tube  must
decay faster to zero than the heat kernel of the straight tube given by
\eqref{straight-decay}.  In fact, the validity of the Hardy inequality \eqref{hardy-operator} can be viewed as a qualitative description of the improved decay of $k(t, \bx, \by)$, so that the present investigation of obtaining sharp quantitative bounds for $k(t, \bx, \by)$ is a natural continuation of \cite{ekk}.

Twisted tubes or more generally curved waveguides provide thoughtful models to study various theoretical and experimental physical phenomena, like heat transfer \cite{B,MB}, wave propagation \cite{H}, optics and electromagnetic theory \cite{KO,PR}, and fluid and quantum mechanics \cite{ekk,Po,DJ}. Surprisingly, the connection between twisting and the heat equation was pointed out for the first time only in the recent paper of Krej\v ci\v
r\'{\i}k and Zuazua  \cite{kz}. Let $L^2(\Omega,K)$ be the weighted $L^2$ space with the Gaussian weight $K(x)= \mathrm{e}^{\bx_3^2/4}$. The authors of \cite{kz} proved that for any $a < 3/4$ there exists a constant $C_a$ such that
\begin{equation} \label{kz}
\| \mathrm{e}^{t\, (\Delta^D_\Omega+E_1)}\|_{L^2(\Omega,K)\to L^2(\Omega)} \,
\leq \, C_a\, (1+t)^{-a}, \qquad a < 3/4, \quad \forall \ t\geq 0.
\end{equation}
Notice that $a$ cannot be larger than $1/4$ in a straight tube. Similar result was obtained by the same authors in \cite{kz2} for the heat semigroup of the twisted Dirichlet-Neumann waveguide. From the applicative point of view, we mention that it is known that twisting enhances heat transfer, see for example \cite{B}, \cite{MB}. This pheno\-me\-non seems to be utilized in the so called Twisted Tube technology.

\smallskip

The aim of this paper is to establish sharp pointwise   on-diagonal   heat kernel estimates. In fact,
as one of our main results we will show that
\begin{equation} \label{main-intro}
k(t,\bx,\bx)  \ \asymp  \ \frac{\mbox{dist}(\bx,\partial\Omega)^2}{\sqrt t}\, \,
\min\Big\{\frac{1+\bx_3^2}{t}, \, \, 1\Big\}, \qquad \forall\ t \geq 1,
\end{equation}
see Theorem \ref{main-result}.
Such a two-sided pointwise bound on the heat kernel of course
gives   us   a more detailed information than an integral bound.
Moreover, a simple application of \eqref{main-intro}  allows us to extend inequality \eqref{kz}
to the critical value $a=3/4$ and at the same time to  a  wider class of subspaces of $L^2(\Omega)$
and $L^1(\Omega)$, see Proposition \ref{prop-l2} and Theorem \ref{thm-lp}.

The proof of estimate \eqref{main-intro} relies on the study of positive global solutions of the equation
$(-\Delta^D_\Omega-E_1)u=0$ in $\Gw$   and of suitable functional inequalities on the corresponding
weighted   $L^2$   spaces. We would like to point out that the Hardy inequality \eqref{hardy-operator}
is used only implicitly, to ensure the subcriticality of $-\Delta^D_\Omega-E_1$.
Since heat kernel estimates can
be reformulated in probabilistic terms in term of the survival
probability of the Brownian bridge killed upon exiting $\Omega$,
our results also imply sharp bounds for such probability:
roughly speaking, the $t^{-3/2}$ decay of the kernel for a fixed
$\bx$ might be expressed by saying that the longitudinal part of the
Brownian bridge sees, asymptotically as $t\to\infty$,
the twist as if it were a Dirichlet boundary condition imposed on the cross-section
of the tube. Hence the corresponding heat kernel resembles the one generated by the
Dirichlet Laplacian on a half-line.

As applications of our heat kernel estimates we prove a family of Sobolev-type inequalities for
the operator $-\Delta_\Omega^D-E_1$, see Theorem \ref{thm-sobolev},
and an upper bound on the number of eigenvalues of Schr\"odinger operators
$-\Delta_\Omega^D-V$, where $V$ is an additional electric potential, Theorem \ref{CLR}.
Both these results fail in straight tubes.

While the behavior of the Dirichlet heat kernel on bounded Euclidean regions is well understood (see e.g. \cite{da87}, \cite{zhang} and references therein), much less is known in unbounded regions because
of the great variety of possible geometrical situations. In fact, a rather complete study is available, as far as we know, only in exterior domains, namely in domains of the form $A^c$, $A$ being a compact set with nonempty interior: see \cite{gs2} (and \cite{cms} for some particular cases) for its behavior both in the transient and in the recurrent case when in addition the spatial variables are required to be not too close to the boundary, and \cite{zhang2} for the remaining range, at least in the transient case. See also \cite{DB} and references quoted therein for the study of heat kernel behavior in other special classes of unbounded domains. Note also that the behavior of the heat kernel for $t\le1$ in the class of domains considered in this paper is entirely known from \cite{zhang}.
We would like to mention that the fact that  heat kernels of subcritical operators decay faster than heat kernels of suitably related critical operators has been proved in larger generality in \cite{fkp}, but the general
situation studied there does not allow for quantitative statements.

Our results are not restricted only to twisted tubes. Indeed, if $L: =-\nabla\cdot(a \nabla)+V$ is a uniformly elliptic operator with smooth enough coefficients which is defined on  $\Omega_0$ such that  $L=-\Delta-E_1$ in $\{(x',x_3)\in \Gw_0 \mid |x_3|>R \}$, for some $R>0$ \it and $L$ is subcritical \rm in $\Omega_0$, then
 a straightforward application of our technique yields
\begin{equation}\label{eq-gen}
\exp(-t L)(x,x)  \ \asymp  \ \frac{\mbox{dist}(x,\partial\Omega_0)^2}{\sqrt t}\, \,
\min\Big\{\frac{1+x_3^2}{t}, \, \, 1\Big\}, \qquad \forall\ t \geq 1, \quad \forall\ x\in \Omega_0.
\end{equation} See Subsection~\ref{sec-general} and in particular Theorem \ref{thm-generalization} for a more detailed discussion.

Let us briefly outline the content of the paper. In Section~\ref{sec-prelim} we formulate our main assumptions on $\omega$ and
$\theta$ and fix some necessary notation. The crucial heat kernel
upper bound is proven in Section~\ref{sec-upperb}, see Theorem~\ref{main-ub}. The central idea of the proof is to establish suitable generalized
Nash inequalities on carefully chosen  weighted
$L^2$ spaces and to use the
equivalence between such inequalities and ultracontractivity estimates, cf.  \cite{cou}. Off-diagonal upper bound are then a straightforward consequence of \cite{grig97}.
In Section~\ref{lowerb-sect} we prove the lower bound in \eqref{main-intro}  by means of a
Dirichlet bracketing argument.  Improvements of inequality \eqref{kz} for a larger class of data, including optimal $L^1$ and $L^\infty$ versions, are given in Section~\ref{sec-l2}.  In the closing Section~\ref{sec-clr-sobol} we prove spectral estimates for Schr\"odinger
operators on $\Omega$ and a family of  Hardy-Sobolev type   inequalities for functions from $H^1_0(\Omega)$ (cf. \cite{pt09}). The latter are, similarly as the Hardy inequality \eqref{hardy-operator}, yet another example of functional
inequalities induced by twisting; i.e. they fail in the straight tube $\Omega_0$. The appendices contain some instrumental technical material, mainly on one-dimensional Schr\"odinger operators, and some remarks on Davies' conjecture (see \cite{da97}) in the present case.


\section{\bf Preliminaries}
\label{sec-prelim}

\noindent Throughout the paper we will work under the following
hypotheses on $\omega$ and $\theta$:

\begin{assumption}
$\omega$ is an open bounded connected subset of $\R^2$  with a
$C^2-$regular boundary which contains the origin. Moreover, $\omega$
is not a disc   or a ring centered at    the origin.
\end{assumption}

\begin{assumption}
The function $\theta$ belongs to the class $C^{2,\alpha}(\R)$ with
some $\alpha >0$ and the support of $\dot\theta$ is compact. Without
loss of generality we assume that $\theta(x_3)=0$ for all $x_3 <\,
\inf\, \mathrm{supp}\, \dot\theta$.
\end{assumption}

\noindent Under these assumptions we define the Dirichlet Laplacian
$-\Delta^D_\Omega$ as the unique self-adjoint operator in
$L^2(\Omega)$ generated by the closed quadratic form
\begin{equation}
\int_{\Omega} |\nabla u|^2\, \mathrm{d}\bx  \qquad u\in H^1_0(\Omega).
\end{equation}

As for the notation, given a set $M$ and functions $f_1,f_2:M\to \R_+$
we will use the convention
$$
f_1(z)   \asymp   f_2(z) \quad \Longleftrightarrow \quad \exists \, c>0\, : \, \forall\, z\in M\quad c^{-1} f_1(z) \,
\leq \, f_2(z) \, \leq c \, f_1(z).
$$
Moreover, given a measure $d\mu(x)=\mu(x)\, {\rm d}x$ on $\Omega_0$ and
$p\geq 1$, we denote by $L^p(\Omega_0, \mu)$ the corresponding $L^p$
space with respect to $d\mu$.  The same notation will be used for the
Sobolev spaces $H^1$ and $H^1_0$.
A point $x\in\Gw_0$ will be denoted by $x=(x',x_3)$, where $x'\in \gw$ and $x_3\in \mathbb{R}$.
Set
$$
\gw_a:=\{(x',x_3)\in \Gw_0 \mid x_3=a \}.
$$
We will also need the functions
\begin{equation}
\gamma(t) := \left\{
\begin{array}{l@{\quad}c}
t^{-5/2} , & 0 < t \leq 1 \, , \\
t^{-3/2} , & 1 < t <\infty,
\end{array}
\right. \qquad \Gamma(t) := \left\{
\begin{array}{l@{\quad}c}
t^{-5/2} , & 0 < t \leq 1 \, , \\
t^{-1/2} , & 1 < t <\infty.
\end{array}
\right.
\end{equation}
By the symbol $c$ we will denote a generic positive constant whose value might change from line to line. Finally, we introduce the distance function
$$
\rho(\bx):= \mbox{dist}(\bx, \partial\Omega), \qquad \bx\in \Omega.
$$
We have the following auxiliary result.

\begin{lemma} \label{dist}
Let $\psi_1$ be the normalized principal eigenfunction of $-\Delta^D_\omega$ associated to $E_1$. Let
$T_\theta: \Omega\to \omega$ be defined by
$$
T_\theta(\bx)=  \left(\cos \theta(\bx_3)\, \bx_1 -
\sin \theta(\bx_3)\, \bx_2,\, \sin \theta(\bx_3)\, \bx_1 + \cos
\theta(\bx_3)\, \bx_2 \right).
$$
Then $\psi_1(T_\theta(\bx))\, \asymp \, \rho(\bx)$.
\end{lemma}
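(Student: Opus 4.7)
The plan is to factor the claim as $\psi_1(T_\theta(\bx)) \asymp \mathrm{dist}(T_\theta(\bx), \partial\omega) \asymp \rho(\bx)$. The first equivalence is standard, and the real work lies in the second one, where the twist has to be ``unwound''.

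For the first step, I would invoke the fact that $\omega$ has $C^2$ boundary, so by elliptic regularity $\psi_1 \in C^2(\overline{\omega})$, vanishes on $\partial\omega$, and is strictly positive inside. The Hopf boundary point lemma then yields $-\partial_\nu \psi_1 \geq c > 0$ on $\partial\omega$, and a standard compactness argument (covering $\omega$ by a boundary strip where $\psi_1$ is comparable to the distance by the Hopf estimate, together with an interior compact set where $\psi_1$ is bounded below) gives $\psi_1(x') \asymp \mathrm{dist}(x', \partial\omega)$ for $x' \in \omega$.

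For the second step, I observe that $T_\theta$ is, slice by slice, the inverse rotation: if $\bx \in \Omega$, then $T_\theta(\bx) \in \omega$, and for every $q \in \partial\omega$ the point $(r_\theta(\bx_3) q, \bx_3)$ lies on $\partial\Omega$ at the same height as $\bx$. Since rotations are isometries of $\R^2$, this yields immediately the upper bound
\begin{equation*}
\rho(\bx) \;\leq\; \bigl|(\bx_1,\bx_2) - r_\theta(\bx_3) q\bigr| \;=\; |T_\theta(\bx) - q| \qquad \forall q \in \partial\omega,
\end{equation*}
and minimizing over $q$ gives $\rho(\bx) \leq \mathrm{dist}(T_\theta(\bx), \partial\omega)$. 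For the reverse bound, let $p = (r_\theta(p_3) q,\, p_3) \in \partial\Omega$ be a closest point to $\bx$, with $q \in \partial\omega$ and set $h := \bx_3 - p_3$. Using that $\dot\theta$ is bounded (compact support plus $C^{2,\alpha}$) and $\omega$ is bounded, one gets $|r_\theta(\bx_3) q - r_\theta(p_3) q| \leq C|h|$, hence by the triangle inequality
\begin{equation*}
\bigl|(\bx_1,\bx_2) - r_\theta(p_3) q\bigr| \;\geq\; \bigl|(\bx_1,\bx_2) - r_\theta(\bx_3) q\bigr| - C|h| \;\geq\; \mathrm{dist}(T_\theta(\bx), \partial\omega) - C|h|.
\end{equation*}
Combining with $\rho(\bx)^2 = |(\bx_1,\bx_2) - r_\theta(p_3) q|^2 + h^2$ and splitting into the two cases $|h| \gtrless \frac{1}{2C}\mathrm{dist}(T_\theta(\bx),\partial\omega)$ produces $\rho(\bx) \geq c\,\mathrm{dist}(T_\theta(\bx), \partial\omega)$ in both regimes.

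The only genuinely non-trivial ingredient is the lower bound in step two, because the closest boundary point to $\bx$ need not lie in the cross-section at height $\bx_3$: the twisting moves nearby cross-sections closer, and one must show that this gain is controlled by $\|\dot\theta\|_\infty$. Everything else is a direct consequence of the Hopf lemma and the isometry of $r_\theta$ slice by slice.
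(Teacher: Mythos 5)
Your proof is correct and follows essentially the same two-step strategy as the paper's: first use the Hopf boundary point lemma (plus $C^2$ regularity of $\partial\omega$) to get $\psi_1 \asymp \mathrm{dist}(\cdot,\partial\omega)$, then use the boundedness of $\dot\theta$ and of $\omega$ to compare the cross-sectional distance $\mathrm{dist}(T_\theta(\bx),\partial\omega)$ with the full distance $\rho(\bx)$. The paper states both equivalences without elaboration; you have simply filled in the details (in particular the case split on $|h|$ for the lower bound), which is a legitimate unpacking of the same argument.
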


\begin{proof}
Let $\Omega_{\bx} = \{\by\in\Omega : \by_3=\bx_3\}$ and define $\tilde\rho(\bx)=$dist$(\bx,\partial \Omega_{\bx})$. Since the boundary of $\omega$ is $C^2$-smooth, the Hopf boundary point lemma,  cf. \cite[Sect.4.6]{da}, implies that  $\psi_1(T_\theta(\bx))\, \asymp \, \tilde\rho(\bx)$.
On the other hand, from the regularity assumptions on $\theta$ it follows that
$\tilde\rho(\bx) \asymp \, \rho(\bx)$.
\end{proof}


\section{Heat kernel bounds}
\label{sec-bounds}

\noindent The main result of  this section is the following

\begin{theorem} \label{main-result}
There exists a constant $c>0$ such that for any $\bx \in \Omega$ and
any $t\geq 1$ we have
 \begin{equation} \label{two-sided-eq}
c^{-1} \ \frac{\rho^2(\bx)}{\sqrt{t}}\,
\min\Big\{\frac{1+\bx_3^2}{t}, \, 1\Big\}\  \leq \  k(t,\bx,\bx)
\ \leq \   c\ \frac{\rho^2(\bx)}{\sqrt{t}}\,
\min\Big\{\frac{1+\bx_3^2}{t}, \, 1\Big\} \,.
\end{equation}
\end{theorem}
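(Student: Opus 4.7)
The plan is to prove the two inequalities in \eqref{two-sided-eq} separately, by rather different techniques, and then combine them.

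\textbf{Upper bound.} The idea is to exploit the subcriticality of $-\Delta^D_\Omega-E_1$, which follows from the Hardy inequality \eqref{hardy-operator}. Subcriticality guarantees the existence of positive global solutions of $(-\Delta^D_\Omega-E_1)u=0$ with various prescribed growth at infinity. I would construct two such solutions: a bounded one $\Phi_0(\bx)\asymp\rho(\bx)$, playing the role of a ground state, and a linearly growing one $\Phi_\infty(\bx)\asymp\rho(\bx)\sqrt{1+\bx_3^2}$ as $|\bx_3|\to\infty$. Performing the ground-state transform $u\mapsto u/\Phi_0$, one obtains a self-adjoint Dirichlet-form operator $L$ on the weighted space $L^2(\Omega,\Phi_0^2\,d\bx)$; its kernel $p(t,\bx,\by)$ satisfies $k(t,\bx,\by)=\Phi_0(\bx)\Phi_0(\by)\,p(t,\bx,\by)$. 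The technical core of the argument is then to establish a weighted Nash-type inequality on $L^2(\Omega,\Phi_0^2\,d\bx)$ involving the second weight $\Phi_\infty/\Phi_0$, so that Coulhon's equivalence (cf.\ \cite{cou}) between such generalized Nash inequalities and ultracontractivity yields an on-diagonal bound for $p(t,\bx,\bx)$ of order $\min\{(1+\bx_3^2)/t,1\}/\sqrt{t}$. Undoing the transform and invoking Lemma~\ref{dist} to identify $\Phi_0^2(\bx)\asymp\rho^2(\bx)$ gives the desired upper bound; off-diagonal estimates, although not needed for \eqref{two-sided-eq}, would follow via \cite{grig97}.

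\textbf{Lower bound.} I would proceed by Dirichlet bracketing. Choose $R>0$ with $\supp\dot\theta\subset(-R,R)$, so that $\Omega$ coincides with the straight tube $\Omega_0$ outside $\{|\bx_3|\le R\}$. For $\bx$ with $\bx_3>R$, domain monotonicity of Dirichlet heat kernels gives $k(t,\bx,\bx)\ge k_+(t,\bx,\bx)$, where $k_+$ denotes the heat kernel of $-\Delta-E_1$ on the straight half-tube $\omega\times(R,\infty)$ with Dirichlet conditions on the whole boundary. By separation of variables $k_+$ factorizes as $\psi_1^2(T_\theta(\bx))$ times the one-dimensional Dirichlet heat kernel on $(R,\infty)$, and the latter satisfies $\asymp\min\{(\bx_3-R)^2/t,\,1\}/\sqrt{t}$ by an elementary reflection-principle computation. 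Combined with Lemma~\ref{dist}, this delivers the claimed lower bound for all $\bx$ with $|\bx_3|$ large (the case $\bx_3<-R$ being symmetric). For $\bx$ in the central twisted region $\{|\bx_3|\le R\}$, $\min\{(1+\bx_3^2)/t,1\}=(1+\bx_3^2)/t$ for large $t$, and a parabolic Harnack inequality propagates the already established bound from the exterior into the core via a Chapman--Kolmogorov iteration.

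\textbf{Main obstacle.} The most delicate step is establishing the weighted Nash inequality in the upper-bound argument. The presence of the $\min\{\cdot,1\}$ in \eqref{two-sided-eq} encodes two distinct decay regimes (the ``straight-tube'' $t^{-1/2}$ and the ``genuinely twisted'' $t^{-3/2}$), and no single standard (unweighted) Nash inequality can reproduce both. Choosing the correct weight $\Phi_\infty/\Phi_0$ and verifying the corresponding functional inequality through a careful Poincar\'e-type estimate on longitudinal annuli—reflecting simultaneously the transverse confinement of $\omega$ and the longitudinal linear growth of $\Phi_\infty$—is the principal technical challenge. The lower bound, by contrast, is more routine once the correct comparison half-tube is identified.
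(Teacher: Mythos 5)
Your lower bound argument (Dirichlet bracketing, reflection principle, then a Chapman--Kolmogorov/Harnack propagation into the central region) is essentially the paper's proof of Theorem~\ref{thm-lowerb-1}, and it is sound.

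Your upper bound argument, however, starts from a premise that is false in this setting. You posit a \emph{bounded} positive global solution $\Phi_0$ of $(-\Delta^D_\Omega-E_1)u=0$ with $\Phi_0\asymp\rho(\bx)$ and use it as a ground state. Such a solution does not exist here. After the unitary transformation to the straight tube, the cone of positive solutions vanishing on $\partial\Omega_0$ is spanned by exactly two minimal Martin functions $w_1=\psi_1\,g_1$ and $w_2=\psi_1\,g_2$, where $g_1(x_3)\asymp 1+\Theta(-x_3)|x_3|$ and $g_2(x_3)\asymp 1+\Theta(x_3)|x_3|$; every positive solution $\alpha w_1+\beta w_2$ therefore grows linearly in at least one longitudinal direction (see \eqref{gj-asymp}--\eqref{w-eq} and the remark following \eqref{w-eq}, based on \cite[Theorem~7.1]{m}). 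Subcriticality does \emph{not} give you freedom to prescribe arbitrary growth; the Martin boundary here has only two points, and neither admits a bounded representative. So the ground-state transform by your $\Phi_0$ cannot even be set up, and the ``single weighted Nash inequality with auxiliary weight $\Phi_\infty/\Phi_0$'' has no operator to attach to. This is not a technical obstacle (as your ``Main obstacle'' paragraph suggests) but a structural one.

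The paper circumvents this by running the ground-state transform \emph{twice}, with the two admissible (growing) solutions. Transforming by $v_0\asymp\psi_1(1+|x_3|)$ and proving a Nash inequality (Lemma~\ref{nash-gen}) on $L^2(\Omega_0,w_0^2)$ yields the regime $k(t,\bx,\bx)\lesssim\rho^2(\bx)(1+\bx_3^2)t^{-3/2}$ (Proposition~\ref{hk-twisted}); transforming separately by $v_1,v_2\asymp\psi_1 g_1,\psi_1 g_2$ and proving the Nash inequality of Lemma~\ref{nash-gen2} yields $k(t,\bx,\bx)\lesssim\rho^2(\bx)t^{-1/2}$ (Proposition~\ref{non-twist-ub}). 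The minimum of the two gives \eqref{main-ub-eq2}. A single Coulhon-type $L^1\to L^\infty$ bound produces a decay rate that is uniform in $x$, so encoding the $\min\{(1+\bx_3^2)/t,1\}$ in one inequality in the way you describe is not available; the $x$-dependence must come from the weights in the two separate transforms. A further point your sketch glosses over: constructing positive solutions of the \emph{twisted} problem with the stated growth requires the Green-function comparison of Theorem~\ref{greenf-equiv} between $H_\theta$ and the separable reference operator $A=-\Delta^D_{\Omega_0}+\dot\theta^2-E_1$ (whose positive solutions are explicit), together with \cite[Lemma~2.4]{p88}. This Green-function equivalence is a substantial ingredient of the proof and does not follow from subcriticality alone.
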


\smallskip

\begin{remark}
Note that while $k(t,\bx,\bx) \asymp t^{-3/2}$ as $t\to\infty$ holds pointwise for any $\bx\in\Omega$,
we have $\sup_{\bx}\, k(t,\bx,\bx) \asymp
t^{-1/2}$ as $t\to\infty$. This is caused by the fact that the faster decay of $k(t,\bx,\bx)$ in time is
balanced by the spacial growth of the factor $1+\bx_3^2$.
Similar discrepancy between the behavior of
$k(t,\bx,\bx)$ and $\sup_{\bx}\, k(t,\bx,\bx)$ has been observed also in \cite{da97} for heat kernels on suitable complete manifolds with a finite number of ends with different asymptotic dimensions. Our case shows a similar behavior in a situation in which the two ends of the manifold considered are diffeomorphic (in fact, related by a rigid motion).

\end{remark}

\begin{remark} \label{rem-appl}
It was pointed out in \cite{ekk} that a local twisting has consequences for the transport in quantum waveguides in the
sense that it decreases the probability of a particle being trapped. Theorem \ref{main-result} shows that it has consequence also for the heat transport. Indeed, equation \eqref{two-sided-eq} implies that with a given initial data (satisfying a natural growth condition), the solution to the heat equation in twisted tubes converges faster to the equilibrium state (zero) with respect to the corresponding solution in straight tubes. Hence twisting improves the heat transport, which is in agreement with the numerical results obtained in \cite{B}, \cite{MB}. The presence of the spacial factor $1+\bf x_3^2 $ then shows that the effect of a local twisting decreases  with the distance to the twisted part of the tube.
\end{remark}

\begin{remark}
The behavior of $k(t,\bx,\by)$ for small times is known and, as
expected, is independent of twisting. The following two-sided
estimate is due to \cite{zhang}:
\begin{equation} \label{zhang}
\forall\, t \leq 1:\qquad c^{-1}\, \min\Big\{ \frac{\rho^2(\bx)}{t}\, ,\, \,
1\Big\}\  t^{-\frac 32} \, \leq \,  k(t,\bx,\bx) \, \leq \, c\,
\min\Big\{ \frac{\rho^2(\bx)}{t}\, ,\, \, 1\Big\}\
t^{-\frac 32},
\end{equation}
see also \cite{da87}. This is in accordance with the well-known \it principle of not feeling the boundary \rm of M. Kac (see \cite{K}, \cite{c}, \cite{v}) which can be  loosely stated as:  The Brownian loop joining $x$ to itself in time $t$ does not touch the boundary with a probability tending to one as $t\downarrow0$. Hence our Dirichlet heat kernel is close to the free, $3$-dimensional one for short time.
\end{remark}

\noindent Theorem \ref{main-result} will be proven in several steps in the    following two    subsections.


\subsection{Heat kernel upper bounds}
\label{sec-upperb}

\noindent We shall prove here our main upper bounds for the heat kernel. Since the proof is a bit subtle, let us first outline the proof. The key steps of the proof are on the one hand, Theorem \ref{greenf-equiv} which proves the equivalence of the Green functions for general subcritical operators that coincide far away the twisting region, and on the other hand, suitable weighted Nash-type inequalities (see Lemmas \ref{nash-gen} and \ref{nash-gen2}), which are known to be related to ultracontractive estimates for the heat kernel. More precisely, the equivalence of such Green functions implies the equivalence of the cone of the positive solutions of the shifted  Laplacian that vanish on the boundary of the twisted tube and the corresponding cone of a reference skew-product equation for which such positive solutions are given explicitly. These positive solutions are used as the weights in the corresponding Nash-type inequalities. In view of \cite{cou}, these functional inequalities are equivalent to  our diagonal heat kernel upper bounds.

We introduce the transformation
$$
(U_\theta \, \varphi)(x) := \varphi \left( r_\theta(x_3)\, x \right), \quad x \in \Omega_0, \quad
\varphi \in L^2( \Omega),
$$
which maps $L^2( \Omega)$ unitarily onto $L^2(\Omega_0)$. A
straightforward calculation shows that $H = U_\theta
(-\Delta^D_\Omega)\, U_\theta^{-1}$ is the self-adjoint operator in
$L^2(\Omega_0)$ which acts on its domain as
\begin{equation}
H := -\Delta^D_\omega -(\partial_3+\dot\theta(x_3)\,
\partial_\tau)^2,
\end{equation}
where $\pd_\tau := x_1\pd_2-x_2\pd_1$. The shifted Laplacian
$-\Delta^D_\Omega-E_1$ transforms accordingly into the operator
$$
H_\theta: = H -E_1 = U_\theta (-\Delta^D_{\Omega}-E_1)\,
U_\theta^{-1}, \quad \text{in\, \,} L^2(\Omega_0)
$$
which is generated by the quadratic form
\begin{equation}
\mathcal{Q}[u] := \int_{\Omega_0} \left(|\nabla_\intercal u|^2+|\pd_3 u+
\dot\theta \pd_\tau u|^2- E_1 |u|^2\right){\rm d}x, \quad u\in
D(\mathcal{Q}) = H^1_0(\Omega_0),
\end{equation}  where $\nabla_\intercal:=(\pd_1,\pd_2)$.

We will also consider the reference operator
\begin{equation} \label{model}
A := -\Delta^D_{\Omega_0} +\dot\theta^2(x_3) -E_1 \qquad \text{in\, \, }
L^2(\Omega_0)
\end{equation}
with Dirichlet boundary conditions at $\partial\Omega_0$. Recall the   Hardy-type   inequality
\begin{align} \label{hardy}
\int_{\Omega_0} \left(|\nabla_\intercal u|^2+|\pd_3 u+ \dot\theta \pd_\tau
u|^2- E_1 |u|^2\right){\rm d}x & \geq c_h \int_{\Omega_0} \dot\theta^2\, |u|^2
\, {\rm d}x, \quad \forall\, u\in H^1_0(\Omega_0),
\end{align}
where the constant $c_h>0$ depends on $\dot\theta$ and $\omega$ but
not on $u$, see \cite{ekk}. In the language of criticality theory
this inequality says that $H_\theta$ is a subcritical operator  in $\Omega_0$ (see for example \cite{pt}).   On the other hand, since $-\Delta^D_{\Omega_0} \geq E_1$, it follows from the definition of the operator $A$ that
$$
A \, \geq\,  \dot\theta^2.
$$
Hence $A$ itself is a subcritical operator in $\Omega_0$. We denote the minimal positive Green functions of $H_\theta$ and $A$ in $\Omega_0$ by $G_\theta(x,y)$ and $G_A(x,y)$, respectively.   The following  theorem  plays a crucial role in the proof of our heat kernel upper bounds.

\begin{theorem} \label{greenf-equiv}
Let  $H_1$ and $H_2$ be two subcritical operators in the tube $\Omega_0$ such that  $H_1=H_2$ in $\{(x',x_3)\in \Gw_0 \mid |x_3|>R \}$ for some $R>0$. Let $G_k(x,y)$ be the positive minimal Green function of  $H_k$ in  $\Gw_0$, $k=1,2$.
Assume that the coefficients of $H_1$ and $H_2$ are H\"older continuous in $\{(x',x_3)\in \overline{\Gw_0} \mid |x_3|<R+6 \}$. Then
\begin{equation}\label{eq:green}
G_1\asymp G_2 \qquad \mbox{in } \Gw_0\times \Gw_0 \setminus \{(x,x)\mid x\in \Gw_0\}.
\end{equation}
In particular, there exists a positive constant $C$ such that
\begin{equation}
 C^{-1}\, G_\theta(x,y) \, \leq \, G_A(x,y) \, \leq \, C\, G_\theta(x,y)
\end{equation}
for all $x,y \in \Omega_0$.
\end{theorem}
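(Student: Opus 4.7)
The second conclusion follows from the first by applying \eqref{eq:green} with $(H_1, H_2) = (H_\theta, A)$: both operators are subcritical on $\Omega_0$ (the former via the Hardy inequality \eqref{hardy}, the latter as noted just above), and both coincide with $-\Delta - E_1$ outside the compact support of $\dot\theta$. So I focus on proving the abstract equivalence \eqref{eq:green}.

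Write $L := H_1 = H_2$ on the two tails $\Omega_\pm := \{x \in \Omega_0 : \pm x_3 > R + 3\}$, and let $\Sigma_\pm := \omega \times \{\pm(R+3)\}$ denote the corresponding inner cross-sectional faces. The plan proceeds in two stages. \emph{Stage 1 (interior comparison).} On any compact $K \Subset (\Omega_0 \times \Omega_0) \setminus \{x = y\}$, both $G_1(x, y)$ and $G_2(x, y)$ are positive, continuous, and share the same Newtonian singularity on the diagonal. The Harnack inequality, the assumed H\"older regularity of the coefficients near the core, and a compactness argument yield $G_1 \asymp G_2$ on $K$; in particular this applies to the compact set $\Sigma_\pm \times \{|y_3| \leq R+3\}$. \emph{Stage 2 (comparison at infinity).} Fix $y$ in the core. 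Then $G_1(\cdot, y)$ and $G_2(\cdot, y)$ are positive solutions of $L u = 0$ in $\Omega_+$ vanishing on the lateral boundary $\partial\omega \times (R+3, \infty)$. An Ancona-type boundary Harnack principle for the subcritical operator $L$ on the half-cylinder $\Omega_+$ (verifiable via separation of variables with the cross-sectional basis $\{\psi_k\}$) asserts that any two such positive $L$-harmonic functions comparable on $\Sigma_+$ are comparable throughout $\Omega_+$. Combining Stages 1 and 2 produces $G_1(x, y) \asymp G_2(x, y)$ uniformly for $x \in \Omega_+$ and $y$ in the core; the case $x \in \Omega_-$ is symmetric.

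The remaining configurations ($y$ in a tail, or both $x, y$ in a tail) reduce to the previous one via the symmetry $G_i(x, y) = G_i(y, x)$, which holds in our application since $H_\theta$ and $A$ are self-adjoint; in general one invokes the adjoint Green function relation $G_i(x, y) = G_{H_i^*}(y, x)$. When both points are deep in the same tail, one additionally decomposes $G_i(\cdot, y) = G_L^{\Omega_\pm}(\cdot, y) + h_i(\cdot, y)$, where $G_L^{\Omega_\pm}$ is the Green function of $L$ on $\Omega_\pm$ (identical for $i = 1, 2$), and the positive $L$-harmonic correction $h_i$ is controlled by Stage 2. The main obstacle is obtaining the boundary Harnack comparison of Stage 2 with a constant independent of $y$; this succeeds because the underlying operator $L$ and the geometry of $\Omega_\pm$ are $y$-independent, so the only $y$-dependence enters through the boundary values on $\Sigma_\pm$, which are precisely the data controlled uniformly by Stage 1.
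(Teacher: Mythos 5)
Your two-stage scheme captures the right intuition and is broadly aligned with the paper's strategy of combining boundary comparisons near $\pd\Omega_0$ with minimal-growth arguments along the tails, but as written there are two genuine gaps.

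The more serious gap is in Stage~1. Your compactness argument is valid for a set $K\Subset(\Omega_0\times\Omega_0)\setminus\{x=y\}$, i.e.\ a set kept away from $\pd\Omega_0$; there a continuous positive function is automatically bounded above and below. But the set you then want to use, $\Sigma_\pm\times\{|y_3|\le R+3\}$, is \emph{not} of this type: $\Sigma_\pm=\gw\times\{\pm(R+3)\}$ reaches $\pd\gw\times\{\pm(R+3)\}\subset\pd\Omega_0$, where both $G_1$ and $G_2$ vanish. Near that boundary the ratio $G_1/G_2$ is not controlled by positivity and continuity alone, and this is exactly where the H\"older-continuity hypothesis and boundary Harnack do genuine work. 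The paper handles this by first comparing $G_k$ with the Green function $G_k^{\Omega_5}$ of $H_k$ on a bounded tube $\Omega_5=\gw\times(-5,5)$ via the boundary Harnack principle (separately in $x$ and in $y$), and then invoking the known equivalence of Green functions of uniformly elliptic operators with H\"older coefficients on bounded domains (Ancona, Hueber--Sieveking, cited as \cite{a97,hs}). Some such bounded-domain Green-function comparison, or an equivalent boundary argument, is indispensable; the compactness step alone cannot supply it.

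A second, smaller gap is in Stage~2. The assertion that ``any two positive $L$-harmonic functions on $\Omega_+$ vanishing on the lateral boundary and comparable on $\Sigma_+$ are comparable throughout $\Omega_+$'' is not a boundary Harnack principle and is in fact false as stated: one may add to $G_1(\cdot,y)$ a positive multiple of $\psi_1(x')(x_3-R-3)$, which vanishes on $\Sigma_+$ and on the lateral boundary yet destroys comparability at infinity. What actually propagates the estimate down the tail is that $G_k(\cdot,y)$ has \emph{minimal growth at infinity} in $\Omega_+$ (equivalently, its Martin representing measure charges only the finite part of the boundary), which is how the paper phrases its extension steps. Replacing your appeal to a ``BHP on the half-cylinder'' by the minimal-growth property of the Green function, and supplying the bounded-domain comparison for Stage~1, would bring your argument in line with a complete proof.
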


\begin{proof}
Without loss of generality, we may assume that $H_1=H_2$ in $\{(x',x_3)\in \Gw_0 \mid |x_3|>1 \}$.
By the interior Harnack inequality for $H_k^*$, the formal adjoint of $H_k$ and the behavior of  the Green functions near the singular point we have that
$ G_1(0,(0,0,\pm 2))\asymp G_2(0,(0,0,\pm 2)) $.
Hence, the Harnack boundary principle for $H_1^*=H_2^*$   \cite{a78,CFMS}
implies that
\begin{equation}\label{eq1}
G_1(0,y)\asymp G_2(0,y) \qquad \forall y\in \gw_{\pm 2}.
\end{equation}
Since $G_k(0,y)$ has minimal growth at infinity of $\{y=(y',\gh)\in \partial \Gw_0 \mid \gh >2 \}$ and $\{y=(y',\gh)\in \partial\Gw_0 \mid \gh<-2 \}$ it follows from \eqref{eq1}  \begin{equation}\label{eq2}
G_1(0,y)\asymp G_2(0,y) \qquad \forall y\in \gw_{\gh}, \;|\gh|> 2.
\end{equation}

Now, fix $y\in \gw_{\gh}, \;|\gh|> 3$. Without loss of generality, we may assume $\gh>3$. Then by the Harnack boundary principle for $H_1=H_2$ we have
\begin{equation}\label{eq0}
 \frac{G_1(x,y)}{G_1((0,0,\pm 2),y)}\asymp \frac{G_2(x,y)}{G_2((0,0,\pm 2),y)} \qquad \forall  x\in \gw_{\pm2}\,, \forall y\in \gw_{\gh}, \;|\gh|> 3 .
\end{equation}

Recall that by  the interior Harnack inequality for $H_k$ we have $$G_k((0,0,\pm 2),y)\asymp G_k(0,y).$$ Hence, it follows from \eqref{eq0} that
\begin{equation}\label{eq4}
 \frac{G_1(x,y)}{G_1(0,y)}\asymp \frac{G_2(x,y)}{G_2(0,y)} \qquad \forall  x\in \gw_{\pm2}\,, \forall y\in \gw_{\gh}, \;|\gh|> 3 .
\end{equation}
Combining \eqref{eq2} and \eqref{eq4} we obtain
\begin{equation}\label{eq3}
 G_1(x,y)\asymp G_2(x,y) \quad \forall x\in \gw_{\pm2}, \quad  \forall y\in \gw_{\gh}, \;|\gh|> 3   .
\end{equation}
The minimality of $G_k(\cdot,y)$ in $\{(x',\gx)\in \partial \Gw_0 \mid \gx<-2 \}$ and \eqref{eq3} imply
$$
G_1(x,y)\asymp G_2(x,y) \quad \forall x\in \gw_{\gx}, \;\gx<-2  .
$$
On the other hand, since $G_1(x,y)\asymp G_2(x,y)$ in a small punctured neighborhood of $y$ (the size of the neighborhood depends on $\mathrm{dist}(y,\partial \Gw_0)$), and in light of \eqref{eq3} and the minimality of $G_k$, we obtain  that
$$ G_1(x,y)\asymp G_2(x,y) \quad \forall x\in \gw_{\gx}, \;\gx>2  .$$
So, we obtained
\begin{equation}\label{eq5}
 G_1(x,y)\asymp G_2(x,y) \quad \forall x\in \gw_{\gx}, \;|\gx|\geq 2  \mbox{ and } \forall y\in \gw_{\gh}, \;|\gh|\geq 3 .
\end{equation}
Denote by $G_k^{\Gw_N}(x,y)$ the positive minimal Green function of $H_k$ in $\Gw_N:=\gw\times (-N,N)$, $k=1,2$, $N\leq 6$.
It is known (see for example \cite{a97,hs}) that for a fixed $N$ we have
\begin{equation}\label{eq9}
G_1^{\Gw_N}\asymp G_2^{\Gw_N} \qquad \mbox{in } \Gw_N\times \Gw_N \setminus \{(x,x)\mid x\in \Gw_N\}.
\end{equation}
Fix $N=5$. It follows from the boundary Harnack principle (in $x$)  that for $k=1,2$ we have
\begin{equation}\label{eq10}
 \frac{G_k(x,y)}{G_k((0,0,\gx),y)}\asymp \frac{G_k^{\Gw_5}(x,y)}{G_k^{\Gw_5}((0,0,\gx),y)} \qquad \forall  x\in \gw_{\gx}, \gx=\pm 4\,, \forall y\in \gw_{\gh} , \;|\gh| \leq 3.
\end{equation}
On the other hand,
\begin{equation*}
 G_k((0,0,\gx),0)\asymp 1, \quad G_k^{\Gw_5}((0,0,\gx),0)\asymp 1 \qquad    \gx=\pm 4,\; k=1,2.
\end{equation*}
Hence, the boundary Harnack principle (in $y$) implies that
\begin{equation}\label{eq11}
 G_k((0,0,\gx),y)\asymp G_k^{\Gw_5}((0,0,\gx),y) \qquad \forall  \gx=\pm 4\,, \forall y\in \gw_{\gh} , \;|\gh|\leq 3.
\end{equation}
Consequently, \eqref{eq10}, \eqref{eq11}, the behavior of Green functions near the singularity, and the comparison principle imply that
\begin{equation}\label{eq12}
 G_k(x,y)\asymp G_k^{\Gw_5}(x,y) \qquad \forall  x\in \gw_{\gx}, |\gx|\leq 4\,, \forall y\in \gw_{\gh} , \;|\gh|\leq 3.
\end{equation}
In light of \eqref{eq12} and \eqref{eq9} with $N=5$ we obtain
\begin{equation}\label{eq14}
 G_1(x,y)\asymp G_2(x,y) \quad \forall x\in \gw_{\gx}, \;|\gx|\leq 4,    \mbox{ and } \forall y\in \gw_{\gh}, \;|\gh|\leq 3 .
\end{equation}
Since $G_k$ has minimal growth at infinity, it follows from \eqref{eq14} that
\begin{equation}\label{eq15}\begin{split}
 &G_1(x,y)\!\asymp \!G_2(x,y)\ \ {\rm if} \\ &(x,y)\!\in\! \{ x\in \gw_{\gx}, |\gx|\geq 4;  y\in \gw_{\gh}, |\gh|\leq 3\}\cup \{ x\in \gw_{\gx}, |\gx|\leq 2;  y\in \gw_{\gh}, |\gh|\geq 3\}.\end{split}
\end{equation}
Thus, \eqref{eq5}, \eqref{eq14}, and \eqref{eq15} imply \eqref{eq:green}.
\end{proof}

\begin{remark}\label{rem:Murata}
Let $M$ be a noncompact smooth Riemannian manifold and let $\Gw$ and $\Gw_j$, $j=0,1,\ldots,\ell$ be subdomains of $M$ with Lipschitz boundaries such that
$$ \Gw=\bigcup_{j=0}^\ell\Gw_j, \quad \Gw_i\cap \Gw_j=\emptyset \quad \forall\, 1\leq i<j \leq \ell,$$
and such that $\overline{\Gw_0}$
 is compact in $M$. Let $H_1$ and $H_2$ be two subcritical operators in $\Omega$ such that  $H_1=H_2$ in $\bigcup_{j=1}^\ell\Gw_j$, and let $G_k(x,y)$ be the positive minimal Green function of  $H_k$ in  $\Gw$, $k=1,2$ (cf. \cite[Section~7]{m}).
Assume that the coefficients of $H_1$ and $H_2$ are H\"older continuous in $\overline{\Gw_0}$. By adopting the proof of Theorem~\ref{greenf-equiv} we obtain that
$$G_1\asymp G_2 \qquad \mbox{in } \Gw\times \Gw \setminus \{(x,x)\mid x\in \Gw\}.$$
\end{remark}

The subcriticality of the operator $-\frac{d^2}{dr^2}\, + \dot\theta^2(r)$ on $\R$ implies that there are exactly two positive minimal solutions (in the sense of Martin boundary) $g_j$, $j=1,2$ of the equation $(-\frac{d^2}{dr^2}\, + \dot\theta^2(r))g=0$ in $\mathbb{R}$ satisfying $g_j(0)=1$
\cite[Appendix 1]{m86}. Moreover, we may assume that
\begin{equation} \label{gj-asymp}
\quad g_1(x_3) \, \asymp\, 1 + \Theta(-x_3)\, |x_3|, \quad g_2(x_3) \, \asymp\, 1+  \Theta(x_3)\, |x_3|,
\end{equation}
where $\Theta$ is the Heaviside function. Let
\begin{equation}\label{cauchy}
g_0:=(g_1+g_2)/2\,.
\end{equation}
Clearly, we have
\begin{equation} \label{g0-asymp}
g_0(x_3) \, \asymp\, 1+|x_3|.
\end{equation}
The functions $w_j:\Omega\to\R$ given by
\begin{equation} \label{w-def}
w_j(x) := \psi_1(x_1,x_2)\,  g_j(x_3),    \quad j=0,1,2,
\end{equation}
then satisfy
\begin{equation} \label{w-eq}
A\,  w_j = 0, \quad w_j >0 \quad \text{in\, } \Omega_0, \quad w_j = 0 \quad \text{on \ } \partial\Omega_0.
\end{equation}
   We note that for any positive solution $w$ of the equation $A\,  w = 0$ on $\Gw_0$ that vanishes on $\partial \Gw_0$ there exists a unique pair of nonnegative numbers $\ga$ and $\gb$ such that $w=\ga w_1+\gb w_2$ \cite[Theorem~7.1]{m}.

\smallskip

\noindent
Next, we apply the above crucial results to obtain the following lemma.

\begin{lemma} \label{gr-state}
There exist positive functions $v_j\in C^2(\Omega_0), j=0,1,2,$ such that
\begin{equation} \label{gr-eq}
H_\theta\, v_j = 0, \qquad  v_j(x) \, \asymp \, w_j(x).
\end{equation}
\end{lemma}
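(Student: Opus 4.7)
The plan is to realize $v_1,v_2$ as (appropriate) Martin-type limits of the minimal Green function $G_\theta$, and then read off the comparability $v_j\asymp w_j$ from the Green-function equivalence already proved in Theorem~\ref{greenf-equiv}. Concretely, fix a reference point $x_0\in\Omega_0$ and choose two sequences $(y_n^\pm)\subset\Omega_0$ with $y_{n,3}^-\to-\infty$ and $y_{n,3}^+\to+\infty$ (with transversal coordinates staying in a bounded set). Define
\begin{equation*}
u_n^\pm(x):=\frac{G_\theta(x,y_n^\pm)}{G_\theta(x_0,y_n^\pm)},\qquad x\in\Omega_0\setminus\{y_n^\pm\}.
\end{equation*}
Each $u_n^\pm$ is a positive solution of $H_\theta u_n^\pm=0$ in $\Omega_0\setminus\{y_n^\pm\}$ that vanishes on $\partial\Omega_0$ and is normalised by $u_n^\pm(x_0)=1$.

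Next, I would invoke Theorem~\ref{greenf-equiv}, applied to $H_1=H_\theta$ and $H_2=A$ (both subcritical in $\Omega_0$ and coinciding outside $\mathrm{supp}\,\dot\theta$), to conclude that there is a constant $C>0$ such that, uniformly in $n$ and in $x$ at positive distance from $y_n^\pm$,
\begin{equation*}
C^{-2}\,\frac{G_A(x,y_n^\pm)}{G_A(x_0,y_n^\pm)}\,\leq\, u_n^\pm(x)\,\leq\, C^{2}\,\frac{G_A(x,y_n^\pm)}{G_A(x_0,y_n^\pm)}.
\end{equation*}
For the reference operator $A$ the tensor structure $A=-\Delta^D_\omega-\partial_3^2+\dot\theta^2-E_1$ makes the right-hand side explicit: a spectral expansion of $G_A$ along the orthonormal basis $\{\psi_k\}$ of $-\Delta^D_\omega$ produces a dominant contribution $\psi_1(x')\psi_1(y')\,G^{1\mathrm D}(x_3,y_3)$, where $G^{1\mathrm D}$ is the minimal positive Green function of $-\frac{d^2}{dr^2}+\dot\theta^2$ on $\mathbb{R}$; the remaining modes decay with an extra exponential factor $\mathrm{e}^{-\sqrt{E_k-E_1}\,|y_3|}$. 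Combining this with the known Martin identification for the 1D operator (\cite[Appendix 1]{m86}), the ratios converge:
\begin{equation*}
\frac{G_A(x,y_n^-)}{G_A(x_0,y_n^-)}\longrightarrow\frac{w_1(x)}{w_1(x_0)},\qquad \frac{G_A(x,y_n^+)}{G_A(x_0,y_n^+)}\longrightarrow\frac{w_2(x)}{w_2(x_0)},
\end{equation*}
with the pairing dictated by the asymptotics \eqref{gj-asymp}.

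With the bounds from the previous step, $\{u_n^\pm\}$ is locally uniformly bounded above and below on compact subsets of $\Omega_0$. The interior Harnack inequality together with Schauder estimates (the coefficients of $H_\theta$ are $C^{0,\alpha}$ because $\theta\in C^{2,\alpha}$) give equicontinuity of $\{u_n^\pm\}$ in $C^2_{\mathrm{loc}}$, so by Arzel\`a--Ascoli one can extract subsequences and define
\begin{equation*}
v_1(x):=\lim_n u_n^-(x),\qquad v_2(x):=\lim_n u_n^+(x),\qquad v_0:=\tfrac12(v_1+v_2).
\end{equation*}
Each $v_j$ is a positive $C^2$ solution of $H_\theta v_j=0$ in $\Omega_0$ that vanishes on $\partial\Omega_0$. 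Passing to the limit in the two-sided bound above and using the identification of the 1D Martin limits yields
\begin{equation*}
v_j(x)\,\asymp\,\frac{w_j(x)}{w_j(x_0)}\,\asymp\,w_j(x),\qquad j=0,1,2,
\end{equation*}
which is precisely \eqref{gr-eq}.

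The main obstacle is the identification of the $G_A$-ratio limits at infinity in Step~3: one has to justify that the transversal modes with $k\ge 2$ are negligible compared to the $k=1$ mode, and that the remaining 1D quotient converges to $g_j(x_3)/g_j(x_{0,3})$. Both are standard (spectral decay of the transversal eigenfunctions plus the classical fact that the two Martin points at infinity for a 1D Schr\"odinger operator with compactly supported potential are the two Jost-type solutions $g_1,g_2$), but they constitute the only place in the argument where an actual computation beyond invoking Theorem~\ref{greenf-equiv} is required; alternatively, one can bypass this step entirely by arguing on the Martin boundary of $A$ using \cite[Theorem 7.1]{m} and then transferring through the Green function comparison.
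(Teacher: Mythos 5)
Your proposal is correct and follows essentially the same route as the paper: the paper's proof is a one-line citation of Theorem~\ref{greenf-equiv} together with \cite[Lemma~2.4]{p88}, which is precisely the abstract principle (``equivalent Green functions have comparable positive solutions of minimal growth'') that you reconstruct by hand via Martin-boundary limits. What you add is a self-contained verification in this special setting: you explicitly build $v_1,v_2$ as subsequential limits of $G_\theta(\cdot,y_n^\pm)/G_\theta(x_0,y_n^\pm)$, control them via Theorem~\ref{greenf-equiv}, and identify the $G_A$-ratio limits using the tensor structure $A=(-\Delta^D_\omega-E_1)\otimes 1+1\otimes(-\partial_3^2+\dot\theta^2)$ plus the classical 1D Martin identification from \cite[Appendix~1]{m86}; the pairing $y_n^\mp\to\mp\infty\rightsquigarrow g_1,g_2$ you state is the correct one. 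This buys you independence from the external reference at the cost of having to justify that the transversal modes $k\ge 2$ contribute exponentially smaller terms to $G_A$ (true, since $G^{1\mathrm D}_{E_k-E_1}$ decays like $\mathrm e^{-\sqrt{E_k-E_1}\,|x_3-y_3|}$ while $G^{1\mathrm D}_0$ does not) — a point you correctly flag as the only real computation. The construction $v_0=(v_1+v_2)/2$ is consistent with $w_0=(w_1+w_2)/2$ and $g_0=(g_1+g_2)/2$, so $v_0\asymp w_0$ follows.
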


\begin{proof}
This follows from Theorem~\ref{greenf-equiv} and
\cite[Lemma~2.4]{p88}.
\end{proof}

\noindent With this result at hand, we define the (ground state) transformation
\begin{equation}
\U_0 : L^2(\Omega_0)\to L^2(\Omega_0, v_0^2),\qquad  (\U_0\, u)(x) := v_0^{-1}(x)\, u(x), \quad x\in\Omega_0.
\end{equation}
$\U_0$ maps $L^2(\Omega_0)$ unitarily onto $L^2(\Omega_0,v_0^2)$ and
$\mathcal{Q}[u]$ transforms into the closed quadratic form
\begin{equation}
\begin{aligned}
Q_0[f] := \mathcal{Q}[v_0 f]= \int_{\Omega_0} \left(|\nabla_\intercal
f|^2+|\pd_3 f+ \dot\theta\, \pd_\tau f|^2\right)\, v_0^2\,  \mathrm{d}x,\\  f\in
D(Q_0)= H^1(\Omega_0, w_0^2).
\end{aligned}
\end{equation}
 The fact that the form domain $D(Q_0)$
coincides with $H^1(\Omega_0, w_0^2)$ follows from the regularity of
$\omega$, see \cite{ds}, and from the equivalence
$$
c\, |\nabla f|^2 \, \leq \, |\nabla_\intercal f|^2+|\pd_3 f+ \dot\theta\,
\pd_\tau f|^2 \, \leq \, c^{-1} \, |\nabla f|^2.
$$
The upper bound is immediate. The lower bound will be given in the
proof of Proposition \ref{hk-twisted}.

We denote by $B_0$ the self-adjoint operator in $L^2(\Omega_0, v_0^2)$
associated with the form $Q_0[f]$. By standard arguments, see e.g.
\cite[Section~4.7]{da}, it follows that the semigroup $\exp(-t B_0)$ is symmetric and
submarkovian on $L^2(\Omega_0, v_0^2)$ and since
$$
H_\theta= \U_0^{-1}B_0\, \U_0,
$$
we get
\begin{equation} \label{transf-heat}
\mathrm{e}^{-t H_\theta}(x,y) = v_0(x)\, v_0(y)\,    \mathrm{e}^{-t B_0}(x,y).
\end{equation}

\noindent Let $\lambda>1$ and introduce a $C^1$ decreasing bijection
$m_\lambda$ of $\R_+$ onto itself by
\begin{equation} \label{m}
m_\lambda(t): =   \lambda  \left\{
\begin{array}{l@{\quad}c}
 t^{-5/2}, & 0 < t \leq 1/2 \, , \\
  \chi(t), &  1/2 < t \leq 1, \\
 t^{-3/2} , & 1 <  t <\infty,
\end{array}
\right.
\end{equation}
where $\chi$ is a $C^1$ decreasing convex function chosen such that
$m_\lambda(t)$ is $C^1(\R_+)$.  Next we define
\begin{equation} \label{xi}
\xi_\lambda(r): = -m_\lambda'(m_\lambda^{-1}(r)), \quad r\in\R_+.
\end{equation}

\noindent We have

\begin{lemma} \label{nash-gen}
There exists $\lambda_0>0$ such that the inequality
\begin{equation}\label{nash-gen-eq}
 \xi_\lambda\big(\|f\|_{L^2(\Gw_0,w_0^2)}^2\big) \, \leq \,
\int_{\Omega_0} |\nabla f|^2\, w_0^2\, {\rm d}x
\end{equation}
holds for all $f\in H^1(\Omega_0,w_0^2)\cap L^1(\Omega_0,w_0^2)$ with
$\|f\|_{L^1(\Omega_0,w_0^2)}\leq 1$ and all $\lambda > \lambda_0$.
\end{lemma}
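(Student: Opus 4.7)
My plan is to establish this Nash-type inequality via a ground-state decomposition in the cross-section, which reduces the ``spread-out'' regime to a one-dimensional Nash inequality for the Schr\"odinger operator $L_1:=-d^2/dx_3^2+\dot\theta^2(x_3)$ on $L^2(\R)$, while the ``concentrated'' regime is covered by the short-time Dirichlet heat-kernel estimate of Zhang~\cite{zhang}. Both reductions pass through Coulhon's equivalence \cite{cou} between $L^1\!\to\! L^\infty$ heat-kernel bounds and generalized Nash-type inequalities; the two resulting branches match the two branches of $\xi_\lambda$ coming from $m_\lambda(t)\sim\lambda t^{-5/2}$ for $t\le 1$ and $m_\lambda(t)\sim\lambda t^{-3/2}$ for $t\ge 1$.

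Let $\psi_1$ be the normalized principal eigenfunction of $-\Delta^D_\omega$ with $\|\psi_1\|_{L^2(\omega)}=1$. For $f\in H^1(\Omega_0,w_0^2)$, write $f(x',x_3)=a(x_3)+b(x',x_3)$ where $a(x_3):=\int_\omega f(x',x_3)\psi_1^2(x')\,dx'$, so that $\int_\omega b(\cdot,x_3)\psi_1^2\,dx'=0$ for each $x_3$. A direct computation gives the orthogonal decoupling
\begin{equation*}
\|f\|_{L^2(w_0^2)}^2=\|a\|_{L^2(\R,g_0^2)}^2+\|b\|_{L^2(w_0^2)}^2,\quad \int_{\Omega_0}|\nabla f|^2 w_0^2\,dx=\int_{\Omega_0}|\nabla b|^2 w_0^2\,dx+\int_{\R}(a')^2 g_0^2\,dx_3,
\end{equation*}
and the spectral gap $E_2-E_1>0$ applied in $L^2(\omega,\psi_1^2)$ (whose conjugated cross-sectional operator has constant ground state) yields $\int_{\Omega_0}|\nabla b|^2 w_0^2\,dx\ge(E_2-E_1)\|b\|_{L^2(w_0^2)}^2$, together with $\|a\|_{L^1(\R,g_0^2)}\le\|f\|_{L^1(w_0^2)}\le 1$. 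For the one-dimensional component $a$, the unitary substitution $h:=g_0 a$ sends $L^2(\R,g_0^2)$ onto $L^2(\R)$; using $-g_0''+\dot\theta^2 g_0=0$, integration by parts identifies $\int(a')^2 g_0^2\,dx_3=\int[(h')^2+\dot\theta^2 h^2]\,dx_3$ as the Dirichlet form of $L_1$, and $\|h\|_{L^1(\R)}\le c^{-1}\|a\|_{L^1(\R,g_0^2)}$ since $g_0\ge c>0$. Because $L_1$ is a subcritical one-dimensional Schr\"odinger operator with nonnegative compactly supported potential, one has $e^{-tL_1}(x,y)\le C\min(t^{-1/2},t^{-3/2})$ for all $t>0$ (short time by domination against the free Laplacian, long time by subcriticality), and by Coulhon this is equivalent to a one-dimensional Nash-type inequality $\xi_\lambda(\|h\|_{L^2}^2)\le C\int[(h')^2+\dot\theta^2 h^2]\,dx_3$ for $\|h\|_{L^1(\R)}\le 1$ and $\lambda$ large; back-substituting yields the corresponding 1D inequality for $a$.

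The concentrated regime is handled separately: Zhang's short-time estimate \cite{zhang} on $k_\theta$, combined with $k_\theta(x,y)=v_0(x)v_0(y)\exp(-tB_0)(x,y)$, $v_0\asymp\psi_1 g_0$ and $g_0\ge 1$, yields the uniform bound $\exp(-tB_0)(x,x)\le C t^{-5/2}$ for $t\le 1$, which by Coulhon's equivalence is precisely the $r^{7/5}$ branch of the required Nash inequality, valid in the regime $r:=\|f\|_{L^2(w_0^2)}^2\gtrsim\lambda$. A short case analysis on the relative sizes of $\|a\|$, $\|b\|$ and $r$ then assembles the ingredients into the composite inequality, using the near sub-additivity $\xi_\lambda(r_1+r_2)\le C(\xi_\lambda(r_1)+\xi_\lambda(r_2))$ of $\xi_\lambda$ on its power-function branches. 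I expect the main obstacle to be the careful gluing of the two heterogeneous decay rates --- the five-dimensional short-time $t^{-5/2}$ (reflecting the quadratic vanishing of $w_0^2$ at $\partial\Omega_0$) and the one-dimensional long-time $t^{-3/2}$ (reflecting the subcriticality of $L_1$) --- into the single composite Nash functional $\xi_\lambda$, with the specific $C^1$ interpolation $\chi$ built into $m_\lambda$ needed to make this matching work; a secondary point is that, when $\|b\|^2$ dominates, the spectral-gap lower bound $(E_2-E_1)\|b\|^2$ must absorb $\xi_\lambda(\|f\|_{L^2(w_0^2)}^2)$ in the intermediate range of $r$, which forces $\lambda$ to be chosen larger than some threshold $\lambda_0>0$.
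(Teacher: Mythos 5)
Your overall structure (ground-state decomposition $f=a+b$, spectral gap for $b$, a one-dimensional Nash inequality for $a$, Zhang's estimate for the short-time branch, then a case analysis to glue) is different from the paper's, which does not decompose $f$ at all: the paper instead exploits the tensor-product structure of the reference operator $A=(-\Delta^D_\omega-E_1)\otimes 1+1\otimes(-\partial_3^2+\dot\theta^2)$, multiplies the 1D bound on $q(t,r,r)$ (Proposition~\ref{prop-app}) with the cross-sectional ultracontractivity bounds of Davies to obtain the pointwise estimate $e^{-tA}(x,x)\le c\,w_0^2(x)\,\gamma(t)$, passes to $\widetilde A_0=\mathcal V_0A\mathcal V_0^{-1}$ which has the form $\widetilde Q_0[f]=\int|\nabla f|^2w_0^2$ on $L^2(\Omega_0,w_0^2)$, and applies Coulhon [Prop.~II.4] once. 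Your decomposition and the spectral-gap estimate for $b$, as well as the algebraic identities relating $\int(a')^2g_0^2$ to the form of $L_1$ via $h=g_0 a$, are all correct computations; but the key one-dimensional ingredient is wrong.

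The gap is the claim that $e^{-tL_1}(x,y)\le C\min(t^{-1/2},t^{-3/2})$ \emph{uniformly} in $x,y$ for all $t>0$, justified by ``long time by subcriticality.'' Subcriticality of $L_1=-d^2/dr^2+\dot\theta^2$ does not give a uniform $t^{-3/2}$ bound; it gives the \emph{weighted} bound $q(t,r,s)\le c\,g_0(r)\,g_0(s)\,t^{-3/2}$ for $t\ge1$ (this is precisely Proposition~\ref{prop-app}, and the factor $g_0^2(r)\asymp 1+r^2$ cannot be removed). The $L^1\to L^\infty$ norm of $e^{-tL_1}$ on Lebesgue-$\R$ decays only like $t^{-1/2}$ even for large $t$: for $|x|$ far outside $\mathrm{supp}\,\dot\theta$, the Brownian path does not see the potential before time $\sim x^2$, so $\sup_{r}q(t,r,r)\gtrsim t^{-1/2}$. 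Consequently your one-dimensional Nash inequality $\xi_\lambda(\|h\|_{L^2(\R)}^2)\le C\int[(h')^2+\dot\theta^2h^2]\,dx_3$ for $\|h\|_{L^1(\R)}\le1$ is false: taking a bump $h_R$ of unit $L^1$-mass with width $w$ and height $1/w$, supported far from $\mathrm{supp}\,\dot\theta$, gives $\|h_R\|_2^2\sim w^{-1}$ and $\int(h_R')^2\sim w^{-3}$, so for $w$ large (small $r=w^{-1}$) the left side is $\xi_\lambda(r)\sim\lambda^{-2/3}r^{5/3}=\lambda^{-2/3}w^{-5/3}$ while the right side is $\sim w^{-3}$, and the inequality fails for $w$ beyond a $\lambda$-dependent threshold. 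The true sharp constant is $\xi^{(1)}(r)\sim r^3$ (the ordinary 1D Nash inequality), not $\xi_\lambda$. To salvage your route you must not unitarily untwist to $L^2(\R,dx)$; you should stay in $L^2(\R,g_0^2\,dr)$, where the Doob-transformed kernel $g_0(r)^{-1}g_0(s)^{-1}q(t,r,s)$ \emph{is} uniformly $O(t^{-3/2})$ for $t\ge1$, and apply Coulhon to that weighted semigroup directly with the weighted $L^1$-constraint $\|a\|_{L^1(\R,g_0^2)}\le1$ — this is precisely the passage from $q$ to the $h$-transform that the paper's use of $\widetilde A_0$ encapsulates in one step. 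A secondary issue: in the ``concentrated regime'' you invoke Zhang's bound on the twisted tube and the conjugation by $v_0$, but the operator you need to estimate there is $\widetilde A_0$ (built from $A$ on the straight tube with weight $w_0$), not $B_0$ (built from $H_\theta$ with weight $v_0$); these forms and weights are only equivalent up to constants, and the more natural route is to use Zhang on the straight tube and $e^{-tA}\le e^{t(\Delta^D_{\Omega_0}+E_1)}$ by Trotter, as the paper implicitly does through \eqref{2dim-b2} and the 1D short-time bound.
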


\begin{proof}
Consider the heat kernel $\mathrm{e}^{-t A}(x,y)$ of the operator $A$. Since
$$
A = (-\Delta^D_\omega-E_1) \otimes 1 + 1\otimes (-\pd_3^2
+\dot\theta^2),
$$
where $\otimes$ denotes the tensor product.  We have
\begin{equation} \label{hk-decomp}
\exp(-t A)(x,y) = \sum_{j=1}^\infty\, \mathrm{e}^{t(E_1-E_j)}\,
\psi_j(x_1,x_2)\, \psi_j (y_1,y_2)\, q(t,x_3,y_3),
\end{equation}
where $q(t,r,s)$ is the heat kernel of the one-dimensional
Schr\"odinger operator
\begin{equation}
-\frac{d^2}{dr^2}\, + \dot\theta^2(r) \quad \text{in\, } L^2(\R),
\end{equation}
and $E_j$ and $\psi_j$ are the eigenvalues and (normalized)
eigenfunctions of $-\Delta^D_\omega$. By Proposition \ref{prop-app}, see Appendix~\ref{sect-1dim}    (cf. \cite[Theorem~4.2]{m84}),   there
exists a    positive    constant $c$ such that
\begin{equation} \label{q-bound}
q(t,r,r) \leq \frac{c\, g_0^2(r)}{t^{3/2}} \quad \text{if\, } t\geq 1,
\qquad q(t,r,r) \leq \frac{c}{\sqrt{t}} \quad \text{if\, } 0<t< 1,
\end{equation}
where $g_0$ is the function defined by \eqref{cauchy}. On the other
hand, by the ultracontractivity of $\mathrm{e}^{t \Delta^D_\omega}$ we have \cite[Theorem~4.2.5]{da}
\begin{equation} \label{2dim-b1}
\sum_{j=1}^\infty\, \mathrm{e}^{t(E_1-E_j)}\, \psi_j^2(x_1,x_2) \ \leq  \ c\,
\psi_1^2(x_1,x_2) \qquad  t\geq 1.
\end{equation}
\noindent Finally, by \cite[Theorem~4.6.2]{da} we have
\begin{equation} \label{2dim-b2}
\mathrm{e}^{t \Delta^D_\omega}  ((x_1,x_2),(x_1,x_2))   \, \mathrm{e}^{t E_1} =
\sum_{j=1}^\infty\, \mathrm{e}^{t(E_1-E_j)}\, \psi_j^2(x_1,x_2) \leq \frac
{c}{t^2}\, \, \psi_1^2(x_1,x_2) \quad  0<t< 1.
\end{equation}
Combining all these estimates gives
\begin{equation}\label{A-ub}
\mathrm{e}^{-t\, A}(x,x) \leq \, c\, \psi_1^2(x_1,x_2)\, g_0^2(x_3)\, \gamma(t)   =c\, w_0^2(x)\, \gamma(t)
\end{equation}
Next, mimicking the construction of the operator $B_0$ above we notice
that the operator $\widetilde{A}_0:= \V_0 \, A\,  \V_0^{-1}$, where $\V_0$ is the unitary
(ground state) transformation $\V_0 : L^2(\Omega_0)\to L^2(\Omega_0, w_0^2)$ acting
as
\begin{equation}
(\V_0\, u)(x) := w_0^{-1}(x)\, u(x), \quad x\in\Omega_0,
\end{equation}
is associated with the closed quadratic form
\begin{equation} \label{q-tilde}
\widetilde{Q}_0[f]: = \int_{\Omega_0} |\nabla f|^2\, w_0^2\, {\rm d}x, \qquad f \in
D(\widetilde{Q}_0) = H^1(\Omega_0, w_0^2).
\end{equation}
By \eqref{A-ub} we then get
\begin{equation}
\sup_{x}\, \mathrm{e}^{-t\, \widetilde{A}_0}(x,x) = \sup_{x}\, \frac{\mathrm{e}^{-t\,
A}(x,x)}{w_0^2(x)}\, \leq c\, \gamma(t)
\end{equation}
for all $t>0$. Hence,    in view of \eqref{semigroup},    we get
\begin{equation}\label{mlambda}
\|\mathrm{e}^{-t \widetilde A_0} \|_{L^1(\Omega_0, w_0^2) \to L^\infty(\Omega_0,w_0^2)} \ \leq \
m_\lambda(t)
\end{equation}
if $\lambda$ in \eqref{m} is chosen large enough. Note that $(-\log m_\lambda(t))'$ has a polynomial growth.  Therefore, \eqref{mlambda} and \cite[Proposition~II.4]{cou}  yield
\begin{equation} \label{nash-unit}
\xi_\lambda\big(\|f\|_{L^2(\Gw_0,w_0^2)}^2\big) \, \leq \,
\int_{\Omega_0} |\nabla f|^2\, w_0^2\, {\rm d}x \qquad \forall\, f\in
C^\infty_0(\Omega_0)\, : \, \|f\|_{L^1(\Omega_0,w_0^2)} \leq 1.
\end{equation}
Hence \eqref{nash-gen-eq} follows by density.
\end{proof}

\noindent In a similar way as we introduced the functions $m_\lambda$ and $\xi_\lambda$ we define
\begin{equation} \label{mu}
\mu_\lambda(t) := \lambda \left\{
\begin{array}{l@{\quad}c}
 t^{-5/2}, & 0 < t \leq 1/2 \, , \\
 \tilde\chi(t), &  1/2 < t \leq 1, \\
  t^{-1/2} , & 1 <  t <\infty,
\end{array}
\right.
\end{equation}
where $\tilde\chi$ is a $C^1$ decreasing convex function chosen such that
$\mu_\lambda(t)$ is $C^1(\R_+)$. Accordingly, we define
\begin{equation} \label{theta}
\vartheta_\lambda(r) := -\mu_\lambda'(\mu_\lambda^{-1}(r)), \quad r\in\R_+.
\end{equation}

\begin{lemma} \label{nash-gen2}
There exist $\lambda_j>0,\, j=1,2,$ such that the inequality
\begin{equation}\label{nash-gen-eq-2}
\vartheta_\lambda\big(\|f\|_{L^2(\Gw_0,w_j^2)}^2\big) \, \leq \,
\int_{\Omega_0} |\nabla f|^2\, w_j^2\, {\rm d}x , \quad j=1,2
\end{equation}
holds for all $f\in H^1(\Omega_0,w_j^2)\cap L^1(\Omega_0,w_j^2)$ with
$\|f\|_{L^1(\Omega_0,w_j^2)}\leq 1$ and all $\lambda > \lambda_j$.
\end{lemma}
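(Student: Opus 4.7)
The plan is to parallel the proof of Lemma \ref{nash-gen}, replacing the Doob-type transform based on $w_0$ by the one based on $w_j$, and replacing the sharp one-dimensional heat kernel bound $q(t,r,r)\leq c\, g_0^2(r)\, t^{-3/2}$ valid for $t\geq 1$ by the coarser but uniform estimate $q(t,r,r)\leq c\, t^{-1/2}$ valid for all $t>0$. The latter is a direct consequence of the Feynman--Kac representation (or, equivalently, of the one-dimensional free heat kernel upper bound), since $\dot\theta^2\geq 0$ forces $q$ to be pointwise dominated by the one-dimensional Gaussian. What makes this weaker longitudinal bound sufficient here, in contrast with Lemma \ref{nash-gen}, is that by \eqref{gj-asymp} each $g_j$ is bounded below by a strictly positive constant on $\R$; hence one no longer needs to track any $x_3$-growth factor when dividing by $w_j^2$.

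Concretely, I would first introduce the unitary transformation $\V_j:L^2(\Omega_0)\to L^2(\Omega_0,w_j^2)$ defined by $(\V_j u)(x):=u(x)/w_j(x)$, and consider the operator $\widetilde{A}_j:=\V_j\, A\,\V_j^{-1}$ on $L^2(\Omega_0,w_j^2)$. Since $A w_j=0$ in $\Omega_0$ and $w_j=0$ on $\partial\Omega_0$ by \eqref{w-eq}, the same integration by parts carried out at \eqref{q-tilde} for $\widetilde{A}_0$ shows that the quadratic form of $\widetilde{A}_j$ on $H^1(\Omega_0,w_j^2)$ is $\int_{\Omega_0}|\nabla f|^2\, w_j^2\,{\rm d}x$, which matches the right-hand side of \eqref{nash-gen-eq-2}. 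By \cite[Proposition~II.4]{cou}, applicable since $-\log\mu_\lambda$ has polynomially growing derivative, proving \eqref{nash-gen-eq-2} reduces to verifying the ultracontractivity bound
\begin{equation*}
\|\mathrm{e}^{-t\widetilde{A}_j}\|_{L^1(\Omega_0,w_j^2)\to L^\infty(\Omega_0,w_j^2)}=\sup_{x\in\Omega_0}\frac{\mathrm{e}^{-tA}(x,x)}{w_j^2(x)}\leq \mu_\lambda(t),\qquad t>0,
\end{equation*}
for $\lambda$ sufficiently large.

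To establish this bound I would combine the spectral decomposition \eqref{hk-decomp} with the transverse estimates \eqref{2dim-b1} (for $t\geq 1$) and \eqref{2dim-b2} (for $0<t<1$) and with the uniform longitudinal bound $q(t,r,r)\leq c\, t^{-1/2}$. This yields $\mathrm{e}^{-tA}(x,x)\leq c\,\psi_1^2(x_1,x_2)\,\Gamma(t)$ for all $x\in\Omega_0$ and $t>0$, with $\Gamma$ as defined in Section \ref{sec-prelim}. Dividing by $w_j^2(x)=\psi_1^2(x_1,x_2)\, g_j^2(x_3)$ and using $\inf_{\R} g_j>0$, which follows from \eqref{gj-asymp} and the continuity and positivity of $g_j$, gives $\mathrm{e}^{-t\widetilde{A}_j}(x,x)\leq c\,\Gamma(t)\leq \mu_\lambda(t)$ for $\lambda$ large enough. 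The inequality \eqref{nash-gen-eq-2} for $f\in C_0^\infty(\Omega_0)$ with $\|f\|_{L^1(\Omega_0,w_j^2)}\leq 1$ then follows from Coulhon's equivalence, and the general case is obtained by density as at the end of the proof of Lemma \ref{nash-gen}.

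I do not expect a genuine obstacle in this scheme. The only subtle point is the justification that the weaker longitudinal bound $q(t,r,r)\leq c\, t^{-1/2}$ suffices to control $\mathrm{e}^{-tA}(x,x)/w_j^2(x)$ uniformly in $x$, which is precisely what accounts for the slower large-time decay $t^{-1/2}$ appearing in $\mu_\lambda$ in place of the $t^{-3/2}$ decay of $m_\lambda$.
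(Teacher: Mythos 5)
Your proposal is correct and is essentially the paper's own argument: the same Doob transform via $w_j$, the same quadratic form identification, the same replacement of the sharp $t^{-3/2}$ longitudinal bound by the uniform free-kernel bound $q(t,r,r)\le (4\pi t)^{-1/2}\le c\,g_j^2(r)\,t^{-1/2}$ (which uses precisely $\inf_\R g_j>0$, as you note), leading to $\sup_x \mathrm{e}^{-t\widetilde A_j}(x,x)\le c\,\Gamma(t)$ and then to \eqref{nash-gen-eq-2} via \cite[Proposition~II.4]{cou} and density.
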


\begin{proof}
We introduce operators
$\widetilde{A}_j := \V_j \, A\, \V_j^{-1}$, where $\V_j,\, j=1,2$ are unitary
transformations $\V_j : L^2(\Omega_0)\to L^2(\Omega_0, w_j^2)$ which act
as
\begin{equation}
(\V_j\, u)(x) := w_j^{-1}(x)\, u(x), \quad x\in\Omega_0.
\end{equation}
These operators are associated with closed quadratic forms
\begin{equation} \label{q-tilde-2}
\widetilde{Q}_j[f]: = \int_{\Omega_0} |\nabla f|^2\, w_j^2\, {\rm d}x, \qquad f \in
D(\widetilde{Q}_j) = H^1(\Omega_0, w_j^2).
\end{equation}
We follow the arguments of the proof of Lemma \ref{nash-gen} replacing \eqref{q-bound} by
$$
q(t,r,r) \, \leq \, \frac{1}{\sqrt{4\pi\, t}}\, \leq \, \frac{c\, g_j^2(r)}{\sqrt{t}} \qquad \forall \ t\geq 0,
$$
which follows from Proposition \ref{prop-app} given in Appendix~\ref{sect-1dim}. This leads to
\begin{equation}\label{eq:aj}
\sup_{x}\, \mathrm{e}^{-t\, \widetilde{A}_j}(x,x) = \sup_{x}\, \frac{\mathrm{e}^{-t\,
A}(x,x)}{w_j^2(x)}\, \leq c\, \Gamma(t), \quad j=1,2
\end{equation}
for all $t>0$, and therefore, if $\lambda$ in \eqref{mu} is chosen large enough, then
$$
\|\mathrm{e}^{-t \widetilde A_j} \|_{L^1(\Omega_0, w_j^2) \to L^\infty(\Omega_0,w_j^2)} \ \leq \
\mu_\lambda(t).
$$
The statement then follows as in the proof of Lemma \ref{nash-gen}.
\end{proof}

\subsubsection{On-diagonal upper bounds}
The functional inequalities proven in the previous Lemmata enable us to prove  the  following on-diagonal heat kernel estimates.

\begin{proposition} \label{hk-twisted}
There exists a constant $C$ such that for any $x\in\Omega_0$ and any
$t>0$ the following inequality holds:
\begin{align} \label{twisted-ub}
\mathrm{e}^{-t H_\theta}(x,x) \leq C\, \psi_1^2(x_1,x_2)\, (1+x_3^2)\,
\gamma(t)\, .
\end{align}
\end{proposition}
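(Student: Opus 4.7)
The plan is to pass to the unitarily equivalent operator $B_0$. By \eqref{transf-heat} together with Lemma~\ref{gr-state} and \eqref{g0-asymp}, one has $\mathrm{e}^{-t H_\theta}(x,x) = v_0^2(x)\, \mathrm{e}^{-t B_0}(x,x)$ with $v_0^2(x)\asymp \psi_1^2(x_1,x_2)(1+x_3^2)$, so \eqref{twisted-ub} reduces to the uniform on-diagonal bound $\mathrm{e}^{-t B_0}(x,x) \leq C\gamma(t)$ for all $x\in\Omega_0$ and all $t>0$.

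To obtain this, I would transport the Nash-type inequality of Lemma~\ref{nash-gen}, which is formulated for the form $\widetilde{Q}_0[f]=\int_{\Omega_0}|\nabla f|^2\, w_0^2\, \mathrm{d}x$ associated with $\widetilde{A}_0$, to the form $Q_0$ generating $B_0$ (in which $|\pd_3 f|^2$ is replaced by the twisted derivative $|\pd_3 f+\dot\theta\pd_\tau f|^2$). The key step, which is promised but still pending in the text, is the pointwise equivalence
\begin{equation*}
c\, |\nabla f|^2 \leq |\nabla_\intercal f|^2 + |\pd_3 f + \dot\theta \pd_\tau f|^2 \leq c^{-1}|\nabla f|^2.
\end{equation*}
The upper bound follows from expanding the square. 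For the lower bound I would write $\pd_3 f=(\pd_3 f+\dot\theta\pd_\tau f)-\dot\theta\pd_\tau f$ and apply $|a+b|^2\leq 2|a|^2+2|b|^2$, combined with the pointwise bound $|\pd_\tau f|\leq R|\nabla_\intercal f|$ where $R:=\sup_{\gw}|(x_1,x_2)|<\infty$, together with $\|\dot\theta\|_\infty<\infty$ (which follows from the compactness of $\mathrm{supp}\,\dot\theta$ and the $C^{2,\alpha}$ regularity assumed on $\theta$). This yields $|\pd_3 f|^2\leq 2|\pd_3 f+\dot\theta\pd_\tau f|^2 + 2R^2\|\dot\theta\|_\infty^2|\nabla_\intercal f|^2$, giving the required lower bound.

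With the form equivalence at hand, $\widetilde{Q}_0$ and $Q_0$ are comparable on the common domain $H^1(\Omega_0,w_0^2)$, while Lemma~\ref{gr-state} gives $v_0\asymp w_0$ and hence equivalence of the weighted spaces $L^p(\Omega_0,v_0^2)$ and $L^p(\Omega_0,w_0^2)$ for every $p$. Feeding these equivalences into Lemma~\ref{nash-gen}, and possibly enlarging the parameter $\lambda$ (using the flexibility in that lemma), yields a Nash-type inequality of the same shape $\xi_\lambda(\|f\|^2)\leq Q_0[f]$, where $\|\cdot\|$ denotes the $L^2(\Omega_0,v_0^2)$ norm and $\|f\|_{L^1(\Omega_0,v_0^2)}\leq 1$. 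Then \cite[Proposition~II.4]{cou}, applied to the symmetric submarkovian semigroup generated by $B_0$ on $L^2(\Omega_0,v_0^2)$, converts this into the ultracontractivity estimate $\|\mathrm{e}^{-t B_0}\|_{L^1\to L^\infty}\leq C\, m_\lambda(t)$, so that $\mathrm{e}^{-t B_0}(x,x)\leq C\gamma(t)$ pointwise, and \eqref{twisted-ub} follows from the first paragraph.

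The main obstacle I expect is the lower bound of the form equivalence and, more subtly, checking that the transfer of Lemma~\ref{nash-gen} from $\widetilde{Q}_0$ to $Q_0$ really produces a Nash inequality of the same form $\xi_\lambda$ (rather than a distorted one); because $\xi_\lambda$ is not homogeneous, multiplicative constants on the right-hand side have to be absorbed into the parameter $\lambda$ by exploiting the freedom to take $\lambda$ large. Once that bookkeeping is done, the rest is a routine application of Coulhon's equivalence between generalized Nash inequalities and ultracontractive on-diagonal estimates.
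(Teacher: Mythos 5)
Your proposal follows essentially the same route as the paper's proof: reduce via the ground-state transform \eqref{transf-heat} to a uniform on-diagonal bound for $\mathrm{e}^{-tB_0}$, establish the pointwise form equivalence \eqref{aux-new}, transfer the generalized Nash inequality of Lemma~\ref{nash-gen} from $\widetilde Q_0$ to $Q_0$ using $v_0\asymp w_0$, and convert back to an ultracontractivity bound via Coulhon's equivalence. The only notable deviation is in the bookkeeping for the multiplicative constants: rather than trying to enlarge $\lambda$ (which is delicate because $\xi_\lambda$ depends on $\lambda$ nontrivially), the paper applies Lemma~\ref{xi-lem} to absorb the factor coming from the change of weight in the argument of $\xi_\lambda$, and then records the remaining prefactor as a time rescaling $m_\lambda(t)\mapsto m_\lambda(a_0 t)$, which gives the cleaner statement \eqref{aux}.
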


\begin{proof}
We note that $|\pd_\tau f|^2 \leq C_\omega\, |\nabla_\intercal f|^2$ for
some constant $C_\omega$. Using the inequality
$$
2 |\dot\theta|\, |\pd_3 f|\, |\pd_\tau f| \leq \eps\, |\pd_3 f|^2
+\eps^{-1} |\dot\theta|^2\, |\pd_\tau f|^2, \quad 0<\eps<1,
$$
and taking $\vge$ close to $1$, it is then easy to see that
\begin{equation} \label{aux-new}
|\nabla_\intercal f|^2+|\pd_3 f+ \dot\theta \pd_\tau f|^2 \geq  c_0 \, |\nabla
f|^2
\end{equation}
for some  $c_0>0$.
 Let $\|f\|_{L^1(\Omega_0,v_0^2)}
\leq 1$. By Lemma \ref{gr-state} we have $\kappa_0^{-1} \leq v_0^2/w_0^2 \leq \kappa_0$ for some $\kappa_0>1$. We apply Lemma \ref{nash-gen} to the function  $\tilde f:=\kappa_0^{-1} f$.  Using the fact that $\xi_\lambda$ is increasing, in view of Lemma \ref{xi-lem}, see Appendix~\ref{sect-xi},  and \eqref{aux-new} we obtain
\begin{align*}
\xi_\lambda\big(\, \|f\|_{  L^2(\Gw_0,v_0^2)  }^2\big) & \leq \xi_\lambda\big(\, \kappa_0^3\,
\|\tilde f\|_{L^2(\Gw_0,w_0^2)}^2\big) \leq C_{\kappa_0^3}\ \xi_\lambda\big(
\|\tilde f\|_{L^2(\Gw_0,w_0^2)}^2\big) \\
& \leq C_{\kappa_0^3}\ \kappa_0^{-2}  \int_{\Omega_0} |\nabla f|^2\, w_0^2\, {\rm d}x \, \leq \, a_0^{-1} \, Q_0[f], \qquad a_0 := c_0\, \kappa_0\, C_{\kappa_0^3}^{-1},
\end{align*}
where $c_0$ is the constant in \eqref{aux-new}.
Hence
\begin{equation} \label{nash-final}
 a_0\,  \, \xi_\lambda\big(\|f\|_{L^2(\Gw_0,v_0^2)}^2\big) \, \leq \, Q_0[f]
\qquad \forall\, f\in C^\infty_0(\Omega_0)\, : \, \|f\|_{L^1(\Omega_0,v_0^2)}
\leq 1.
\end{equation}
This inequality extends by
density to all functions $f\in H^1(\Omega_0, v_0^2)$ satisfying the condition
$\|f\|_{L^1(\Omega_0,v_0^2)} \leq 1$. By the standard Beurling-Deny criteria and the result of \cite[Thm.1.3.3]{da} it follows
that the operator $B_0$ associated with the form $Q_0$ generates  a positivity preserving semigroup $\mathrm{e}^{-t B_0}$ which is contractive in $L^p(\Omega_0, v_0^2)$ for all $p\in [1,\infty]$ and all $t\geq 0$. These facts and the integrability at infinity of $1/\xi_\lambda$ (see Appendix~\ref{sect-xi}) allow us to apply \cite[Proposition~II.1]{cou} which, in view of \eqref{nash-final}, gives
\begin{equation} \label{aux}
\|\mathrm{e}^{-t B_0}\|_{L^1(\Omega_0, v_0^2) \to L^\infty(\Omega_0,v_0^2)}\, \leq
\, m_\lambda(a_0 \, t).
\end{equation}
Equation \eqref{twisted-ub} thus follows by applying \eqref{transf-heat}.
\end{proof}

\begin{remark} As expected, the twisting influences the decay rate of $\mathrm{e}^{-t
H_\theta}(x,x)$ for large times. On the other hand, the faster decay
in time is compensated by the additional weight factor $(1+x_3^2)$.
>From our heat kernel lower bounds, see Theorem \ref{thm-lowerb-1},
it follows that the growth of this weight cannot be
improved.
\end{remark}

\noindent The next result holds for twisted as well as for
straight tubes, i.e.\! for $\dot\theta\equiv 0$.

\begin{proposition} \label{non-twist-ub}
There exists a constant $C$ such that for any $x\in\Omega_0$ and any
$t>0$
\begin{align} \label{non-twist-eq}
\mathrm{e}^{-t H_\theta}(x,x) \leq C\, \psi_1^2(x_1,x_2)\, \Gamma(t).
\end{align}
\end{proposition}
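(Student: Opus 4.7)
The plan is to mirror the proof of Proposition \ref{hk-twisted}, replacing the single ground state $v_0$ by the two positive minimal solutions $v_1, v_2$ from Lemma \ref{gr-state}. The crucial observation is that, by \eqref{gj-asymp}, $g_1(x_3)$ is bounded of order one on $\{x_3\ge 0\}$ while $g_2(x_3)$ is bounded of order one on $\{x_3\le 0\}$; consequently, running the ground-state substitution argument once with $j=1$ and once with $j=2$ and keeping whichever bound is better at each point eliminates the longitudinal weight entirely, at the cost of the slower temporal decay $t^{-1/2}$ that is already encoded in $\Gamma$ and $\mu_\lambda$.

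Concretely, for $j=1,2$ I would introduce the unitary map $\U_j:L^2(\Omega_0)\to L^2(\Omega_0, v_j^2)$ by $(\U_j u)(x):=v_j^{-1}(x)u(x)$ and the intertwined operator $B_j:=\U_j H_\theta \U_j^{-1}$, associated with the closed form
\[
Q_j[f] := \int_{\Omega_0} \big( |\nabla_\intercal f|^2 + |\pd_3 f + \dot\theta\, \pd_\tau f|^2 \big)\, v_j^2 \, \mathrm{d}x, \qquad f \in H^1(\Omega_0, w_j^2).
\]
Lemma \ref{nash-gen2}, together with the equivalence $v_j\asymp w_j$ from Lemma \ref{gr-state}, the analog of the scaling Lemma \ref{xi-lem} applied to $\vartheta_\lambda$, and the form lower bound \eqref{aux-new}, would yield, in exact parallel with \eqref{nash-final}, a Nash-type inequality
\[
a_j\,\vartheta_\lambda\!\big(\|f\|_{L^2(\Omega_0, v_j^2)}^2\big) \,\le\, Q_j[f]
\]
for every $f\in H^1(\Omega_0, v_j^2)$ with $\|f\|_{L^1(\Omega_0, v_j^2)}\le 1$. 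Applying \cite[Proposition~II.1]{cou} to the submarkovian semigroup $\mathrm{e}^{-tB_j}$, using integrability at infinity of $1/\vartheta_\lambda$, would then give the ultracontractivity bound $\|\mathrm{e}^{-tB_j}\|_{L^1(\Omega_0, v_j^2)\to L^\infty(\Omega_0, v_j^2)}\le \mu_\lambda(a_j t)$. The intertwining identity $\mathrm{e}^{-tH_\theta}(x,x) = v_j^2(x)\,\mathrm{e}^{-tB_j}(x,x)$ combined with $v_j^2\asymp w_j^2=\psi_1^2 g_j^2$ would produce
\[
\mathrm{e}^{-tH_\theta}(x,x) \,\le\, C\,\psi_1^2(x_1,x_2)\,g_j^2(x_3)\,\Gamma(t), \qquad j=1,2.
\]

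To conclude \eqref{non-twist-eq} I would, for each point $x=(x_1,x_2,x_3)$, select the bound with $j=1$ when $x_3\ge 0$ and with $j=2$ when $x_3\le 0$; by \eqref{gj-asymp} the factor $g_j^2(x_3)$ is then uniformly $O(1)$ and can be absorbed into the constant. The main effort, as I see it, is bookkeeping rather than a new analytical difficulty: one must verify that every functional-analytic input of the $v_0$ argument (the Beurling--Deny criterion ensuring the submarkovian property of $\mathrm{e}^{-tB_j}$, the rescaling step that transfers the Nash inequality from the weight $w_j^2$ to $v_j^2$, and Coulhon's criterion) transfers verbatim to $j=1,2$. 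This is the case because each input uses only $v_j\asymp w_j$ and the structural gradient bound \eqref{aux-new}, both of which hold identically in all three cases $j=0,1,2$.
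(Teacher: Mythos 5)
Your argument is essentially the paper's own: you conjugate $H_\theta$ by the ground-state transforms $\U_j$ built from $v_1,v_2$, use Lemma \ref{nash-gen2} together with $v_j\asymp w_j$ and the gradient bound \eqref{aux-new} to obtain the generalized Nash inequality with $\vartheta_\lambda$, invoke Coulhon's criterion to get $\|\mathrm{e}^{-tB_j}\|_{L^1\to L^\infty}\le\mu_\lambda(a_j t)$, and intertwine back to obtain $\mathrm{e}^{-tH_\theta}(x,x)\le C\psi_1^2 g_j^2\,\Gamma(t)$ for $j=1,2$. The only thing you add is spelling out the (implicit in the paper) final step that $\min\{g_1^2(x_3),g_2^2(x_3)\}\asymp 1$ by \eqref{gj-asymp}, which is correct and completes the proof exactly as intended.
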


\begin{proof}
We define the (ground state) transformations $\U_j : L^2(\Omega_0)\to L^2(\Omega_0, v_j^2)$ by
\begin{equation}
(\U_j\, u)(x) := v_j^{-1}(x)\, u(x), \quad x\in\Omega_0, \quad j=1,2.
\end{equation}
Hence $\U_j$ map $L^2(\Omega_0)$ unitarily onto $L^2(\Omega_0,v_j^2)$ and
$\mathcal{Q}[u]$ transforms into
\begin{equation}
Q_j[f] := \mathcal{Q}[v_j f]= \int_{\Omega_0} \left(|\nabla_\intercal
f|^2+|\pd_3 f+ \dot\theta\, \pd_\tau f|^2\right)\, v_j^2\,  \mathrm{d}x, \, f\in
D(Q_j)= H^1(\Omega_0, w_j^2),
\end{equation}
Accordingly, we introduce operators $B_j :=  \U_j\, H_\theta\, \U_j^{-1}$ generated by the quadratic forms $Q_j$. As above, we get
\begin{equation} \label{transf-2}
\mathrm{e}^{-t H_\theta}(x,y) = v_j(x)\, v_j(y)\, \mathrm{e}^{-t B_j}(x,y), \quad j=1,2.
\end{equation}
In the same way as in the proof of Proposition \ref{hk-twisted}   (using Lemma~\ref{nash-gen2})   we thus arrive at
$$
\|\mathrm{e}^{-t B_j}\|_{L^1(\Omega_0, v_j^2) \to L^\infty(\Omega_j,v_j^2)}\, \leq
\,  \mu_\lambda( a_j \, t), \quad j=1,2.
$$
where $a_j>0$. Hence by \eqref{transf-2}
$$
\mathrm{e}^{-t H_\theta}(x,x) \, \leq c\, \psi_1^2(x_1,x_2)\, g_1^2(x_3)\, \Gamma(t), \quad \mathrm{e}^{-t H_\theta}(x,x) \, \leq c\, \psi_1^2(x_1,x_2)\, g_2^2(x_3)\, \Gamma(t)
$$
for all $x\in\Omega_0$ and $t>0$.
This concludes the proof.
\end{proof}

\begin{theorem} \label{main-ub}
There exists a constant $C>0$ such that for any $\bx\in\Omega$ and any
$t\geq 1$ the following inequalities hold true
\begin{align}
k(t,\bx,\bx)  & \, \leq  \, C\, \rho^2(\bx)\,
\min\big\{(1+\bx_3^2)  \, t^{-\frac 32}, \, \, t^{-\frac 12}\big\}.
\label{main-ub-eq2}
\end{align}
\end{theorem}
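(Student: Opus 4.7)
The plan is to obtain Theorem~\ref{main-ub} by combining the two on-diagonal estimates already established in Propositions~\ref{hk-twisted} and~\ref{non-twist-ub}, and then transporting the resulting inequality from the straight tube $\Omega_0$ to the twisted tube $\Omega$ via the unitary $U_\theta$.

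First I would specialize to $t\geq 1$, where by definition $\gamma(t)=t^{-3/2}$ and $\Gamma(t)=t^{-1/2}$. In this range Proposition~\ref{hk-twisted} reads
$$\mathrm{e}^{-t H_\theta}(x,x)\,\leq\, C\,\psi_1^2(x_1,x_2)\,(1+x_3^2)\, t^{-3/2},$$
whereas Proposition~\ref{non-twist-ub} reads
$$\mathrm{e}^{-t H_\theta}(x,x)\,\leq\, C\,\psi_1^2(x_1,x_2)\, t^{-1/2}.$$
Taking the pointwise minimum of these two upper bounds yields
$$\mathrm{e}^{-t H_\theta}(x,x)\,\leq\, C\,\psi_1^2(x_1,x_2)\,\min\bigl\{(1+x_3^2)\,t^{-3/2},\, t^{-1/2}\bigr\}$$
for all $x\in\Omega_0$ and $t\geq 1$. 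This is already the desired estimate on the straight tube.

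Next I would push this bound through the unitary equivalence $H_\theta=U_\theta(-\Delta^D_\Omega-E_1)U_\theta^{-1}$. Since the rotation $r_\theta(x_3)$ preserves the $x_3$-coordinate and the two-dimensional Euclidean norm of $(x_1,x_2)$, writing $\bx=r_\theta(x_3)\,x$ with $x\in\Omega_0$ gives $\bx_3=x_3$ and $(x_1,x_2)=T_\theta(\bx)$. The integral kernels of the two semigroups are therefore identified by
$$k(t,\bx,\bx)\,=\,\mathrm{e}^{-t H_\theta}(x,x).$$
Finally, Lemma~\ref{dist} gives $\psi_1(T_\theta(\bx))\asymp\rho(\bx)$, hence $\psi_1^2(x_1,x_2)\asymp\rho^2(\bx)$. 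Substituting this together with $\bx_3=x_3$ in the bound above produces the claimed inequality.

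The only technical content of the theorem, namely the sharp control of $\mathrm{e}^{-t H_\theta}(x,x)$ by a product of a spatial weight and a time factor, has already been absorbed into the two preceding propositions (ultimately into the weighted Nash-type inequalities of Lemmas~\ref{nash-gen} and~\ref{nash-gen2} combined with the Green-function equivalence of Theorem~\ref{greenf-equiv}). Consequently no genuine obstacle remains: the proof reduces to taking the pointwise minimum of two existing bounds and tracking the unitary identification together with Lemma~\ref{dist}.
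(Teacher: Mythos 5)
Your proposal is correct and follows exactly the paper's own argument: the paper likewise uses the unitary relation $k(t,\bx,\by)=\mathrm{e}^{-tH_\theta}(r_\theta^{-1}\bx,r_\theta^{-1}\by)$, invokes Propositions~\ref{hk-twisted} and~\ref{non-twist-ub} (whose minimum for $t\geq 1$ gives $\min\{(1+x_3^2)t^{-3/2},\,t^{-1/2}\}$), and finishes with Lemma~\ref{dist}. You have merely spelled out the details the paper leaves implicit.
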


\begin{proof}
Let $\bx, \by \in \Omega$. From
$$
U^{-1}_\theta\, \mathrm{e}^{-t H_\theta} \, U_\theta = \mathrm{e}^{t\,
(\Delta^D_\Omega+E_1)}
$$
we get
$$
k(t,\bx,\by) =  \mathrm{e}^{-t H_\theta} (r_\theta^{-1} \bx ,\, r_\theta^{-1}
\by).
$$
The statement thus follows directly from Propositions \ref{hk-twisted},  \ref{non-twist-ub} and Lemma \ref{dist}.
\end{proof}

\subsubsection{Off-diagonal upper bounds}

\noindent A combination of \eqref{zhang} with Theorem~\ref{main-ub} gives

\begin{corollary} \label{upperb-thm}
For any $C>4$ there exists a constant $K_C>0$ such that for any $\bx\in\Omega$ and any
$t\geq 1$ it holds
\begin{align}
k(t,\bx,\by)  & \, \leq  \, K_C\, \rho(\bx)\, \rho(\by)\,
\min\Big\{\sqrt{(1+\bx_3^2) (1+\by_3^2)} \, \, t^{-\frac 32}, \, \,
t^{-\frac 12}\Big\}\, \, \mathrm{e}^{-\frac{|\bx-\by|^2}{Ct}},
\end{align}
\end{corollary}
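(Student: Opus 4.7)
The goal is to insert a Gaussian factor $\mathrm{e}^{-|\bx-\by|^2/(Ct)}$ into the on-diagonal bound of Theorem~\ref{main-ub}. The natural tool is Grigor'yan's off-diagonal conversion result \cite{grig97}, which upgrades any sufficiently regular pointwise on-diagonal heat-kernel estimate into a matching off-diagonal Gaussian one, at the price of degrading the Gaussian constant from the optimal value $4$ to an arbitrary $C>4$.

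To apply \cite{grig97}, one first needs a single on-diagonal bound valid for all $t>0$ and continuous, non-increasing in $t$. For $0<t\leq 1$ this is provided by Zhang's estimate \eqref{zhang}; for $t\geq 1$ it is Theorem~\ref{main-ub}. Set
\begin{equation*}
\varphi(t,\bx):=\rho(\bx)^2\,\min\!\left\{(1+\bx_3^2)\,t^{-3/2},\ t^{-1/2}\right\},\qquad t\geq 1.
\end{equation*}
For fixed $\bx$, the two branches of this minimum coincide at $t=1+\bx_3^2$ and each is itself decreasing in $t$, so $t\mapsto\varphi(t,\bx)$ is continuous and non-increasing on $[1,\infty)$. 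Glued with Zhang's bound on $(0,1]$, it yields an on-diagonal control with the required monotonicity. Feeding this into \cite{grig97} and fixing any $C>4$ delivers
\begin{equation*}
k(t,\bx,\by)\,\leq\,\widetilde{K}_C\,\sqrt{\varphi(t,\bx)\,\varphi(t,\by)}\,\exp\!\left(-\frac{|\bx-\by|^2}{Ct}\right)\qquad\forall\,t\geq 1,\ \bx,\by\in\Omega.
\end{equation*}

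The pre-Gaussian factor now simplifies by a purely algebraic inequality: for positive $a_1,a_2,b$ one has $\sqrt{\min\{a_1,b\}\min\{a_2,b\}}\leq\min\{\sqrt{a_1a_2},b\}$, since both $\sqrt{\min\{a_1,b\}\min\{a_2,b\}}\leq\sqrt{a_1a_2}$ and $\sqrt{\min\{a_1,b\}\min\{a_2,b\}}\leq b$ are immediate. Applying this with $a_1=(1+\bx_3^2)t^{-3/2}$, $a_2=(1+\by_3^2)t^{-3/2}$, and $b=t^{-1/2}$ gives
\begin{equation*}
\sqrt{\varphi(t,\bx)\varphi(t,\by)}\,\leq\,\rho(\bx)\rho(\by)\,\min\!\left\{\sqrt{(1+\bx_3^2)(1+\by_3^2)}\,t^{-3/2},\ t^{-1/2}\right\},
\end{equation*}
which is exactly the pre-Gaussian factor appearing in the corollary, so setting $K_C:=\widetilde K_C$ closes the argument.

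The main (mild) obstacle is verifying that the combined on-diagonal bound meets Grigor'yan's hypotheses. Monotonicity in $t$ was checked above via the coincidence of the two branches at $t=1+\bx_3^2$. The loss from the sharp Gaussian constant $4$ to any $C>4$ is the standard Davies-perturbation artifact: conjugating $\mathrm{e}^{-tH_\theta}$ by $\mathrm{e}^{\alpha\phi}$ with $\phi$ Lipschitz of slope $\alpha$ in the direction $\bx-\by$ produces both a factor $\mathrm{e}^{-\alpha|\bx-\by|}$ from the twist and a factor $\mathrm{e}^{\eta\alpha^2 t}$ from the perturbed form, and the latter is not quite absorbed upon optimizing $\alpha$, forcing $K_C\to\infty$ as $C\downarrow 4$. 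Nothing in this final step is specific to the twisted tube, which is why the passage from the sharp on-diagonal bound of Theorem~\ref{main-ub} to the off-diagonal one is a routine application of \cite{grig97}.
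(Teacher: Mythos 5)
Your proposal is essentially correct and reaches the same result, but it departs from the paper's route in one organizational detail worth noting. The paper applies Grigor'yan's Theorem~3.1 \emph{twice}: once with the on-diagonal bound $k(t,\bx,\bx)\leq C\rho^2(\bx)(1+\bx_3^2)\gamma(t)$ and once with $k(t,\bx,\bx)\leq C\rho^2(\bx)\Gamma(t)$, where $\gamma$ and $\Gamma$ are the globally defined piecewise powers from Section~\ref{sec-prelim}. Each application produces a clean Gaussian off-diagonal estimate, and the final bound is the minimum of the two. You instead apply Grigor'yan once, feeding it the fused on-diagonal profile $\varphi(t,\bx)=\rho^2(\bx)\min\{(1+\bx_3^2)t^{-3/2},t^{-1/2}\}$ (spliced with Zhang's estimate for $t\leq 1$), and then recover the stated pre-Gaussian factor from the elementary inequality $\sqrt{\min\{a_1,b\}\min\{a_2,b\}}\leq\min\{\sqrt{a_1a_2},b\}$, which you verify correctly. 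Both routes work; the paper's version has the advantage that the functions fed to \cite[Theorem~3.1]{grig97} are uniform in $\bx$ (only the constant prefactor depends on $\bx$), whereas your fused profile has an $\bx$-dependent crossover at $t=1+\bx_3^2$. You address monotonicity but not the doubling-type regularity condition that Grigor'yan's theorem also requires; this does hold uniformly for $\varphi(\cdot,\bx)$ (the doubling ratio is bounded by $2^{5/2}$ independently of $\bx$), but it should be stated since the constant in the conclusion would otherwise be allowed to depend on $\bx$. Two further cosmetic points: Grigor'yan's theorem produces the geodesic distance $r(\bx,\by)$ in the Gaussian, and one still needs $r(\bx,\by)\geq|\bx-\by|$ as in the paper's last line; and your concluding paragraph about the Davies perturbation mechanism is helpful exposition but not part of the argument.
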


\begin{proof}
>From \eqref{zhang} and Theorem~\ref{main-ub} it follows that
$$
k(t,\bx,\bx)  \leq  C\, \rho^2(\bx)\,  (1+\bx_3^2) \, \gamma(t) \qquad \forall\, \bx\in\Omega, \,
\, \forall\, t>0.
$$
A direct inspection shows that  \cite[Theorem~3.1]{grig97} is
applicable to $k(t,\bx, \by)$ with the respective functions $f$ and
$g$ which parametrically depending on $\bx$ and $\by$    (see the example in \cite[p.~37]{grig97}).
Hence for any $C>4$ and all $t>0$ it holds
$$
k(t,\bx,\by)   \leq   \delta(C)\, \rho(\bx) \rho(\by)
\sqrt{(1+\bx_3^2) (1+\by_3^2)} \, \, \gamma(t) \,
\mathrm{e}^{-\frac{r(\bx,\by)^2}{Ct}}.
$$
where $r(\bx,\by)$ is the geodesic distance between $\bx$ and $\by$
and $\delta(C)$ is a positive constant which depends on $C$ and
$\gamma(\cdot)$. Repeating the same procedure with the bound
$$
k(t,\bx,\bx)  \leq  C\, \rho^2(\bx)\,  \Gamma(t) \qquad \forall\, \bx\in\Omega, \, \, \forall\, t>0,
$$
which again follows from \eqref{zhang} and
Theorem~\ref{main-ub},  we obtain
$$
k(t,\bx,\by)  \leq  \tilde\delta(C)\, \rho(\bx) \rho(\by) \,
\Gamma(t) \, \mathrm{e}^{-\frac{r(\bx,\by)^2}{Ct}}.
$$
The fact that $r(\bx,\by) \geq |\bx-\by|$ completes the proof.
\end{proof}
\begin{remark}
By \cite{zhang}, there exist positive constants $c$, $C$ and $T$ such that for any $\bx, \by\in\Omega$ and any
$0<t\leq T$ the following off-diagonal estimates holds true
\begin{align}\label{zhao_off}
\min\Big\{\frac{\rho(\bx)\, \rho(\by)}{t}, 1\Big\}  \frac{c\,\mathrm{e}^{-\frac{C|\bx-\by|^2}{t}}}{t^{3/2}} \leq k(t,\bx,\by)   \leq
\min\Big\{\frac{\rho(\bx)\, \rho(\by)}{t}, 1\Big\} \frac{\mathrm{e}^{-\frac{|\bx-\by|^2}{Ct}}}{ct^{3/2}}\,.
\end{align}
\end{remark}

\vspace{0.2cm}


\subsection{Heat kernel lower bounds and the Brownian bridge reformulation}
\label{lowerb-sect}

The aim of this section is to show that the long time decay rate
$t^{-3/2}$ of the upper bound \eqref{twisted-ub} is sharp. In fact, we show that a matching lower bound holds. The proof of such a lower bound is easier than the previous ones. It uses monotonicity of the Dirichlet heat kernel with respect to the domain to compare the heat kernel $k(t,\bx,\bx)$ of the twisted tube with the Dirichlet heat kernel of a straight half-tube. In fact, we have

\begin{theorem} \label{thm-lowerb-1}
There exists a positive constant $c$ such that for any $\bx\in\Omega$ and any
$t\geq 1$  it holds
\begin{align}
k(t,\bx,\bx)  & \, \geq  \, c\, \rho(\bx)^2 \,
\min\Big\{(1+\bx_3^2) \, t^{-\frac 32}, \, \, t^{-\frac 12}\Big\}
\label{main-ub-eq}.
\end{align}
\end{theorem}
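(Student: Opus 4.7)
\medskip
\noindent\textbf{Proof plan.} The argument is by Dirichlet bracketing, with a case distinction based on whether $1 + \bx_3^2 \geq t$ or $1 + \bx_3^2 < t$. Write $A := \sup \mathrm{supp}\,\dot\theta$ and let $\Omega^+ := r_{\theta_0}(\omega) \times (A, \infty) \subset \Omega$ be the straight half-tube at the right end of $\Omega$ (and $\Omega^-$ the mirror on the left). Throughout, the tool is the monotonicity $k(t, \bx, \by) \geq \mathrm{e}^{tE_1}\, \mathrm{e}^{t \Delta^D_{\Omega'}}(\bx, \by)$ valid for any subdomain $\Omega' \subset \Omega$ containing $\bx, \by$.

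\emph{Regime I: $1 + \bx_3^2 \geq t$.} Bounded $t$ is handled by the short-time estimate \eqref{zhang}, so assume $t$ large. Then $|\bx_3| \geq \sqrt{t-1} > A$, so $\bx \in \Omega^+$ (or $\Omega^-$). Separation of variables on $\Omega^+$ factors the Dirichlet heat kernel as the cross-sectional kernel on $r_{\theta_0}(\omega)$ times the one-dimensional Dirichlet kernel on $(A, \infty)$. By ultracontractivity, Hopf's lemma, and Lemma~\ref{dist}, the cross-sectional factor multiplied by $\mathrm{e}^{tE_1}$ is $\asymp \rho(\bx)^2$ for $t \geq 1$. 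The half-line factor equals $(4\pi t)^{-1/2}(1 - \mathrm{e}^{-(\bx_3 - A)^2/t})$, which is $\asymp 1/\sqrt{t}$ since $(\bx_3 - A)^2/t \geq c > 0$. Combining gives $k(t, \bx, \bx) \geq c\, \rho(\bx)^2/\sqrt{t}$.

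\emph{Regime II: $1 + \bx_3^2 < t$.} Now $\bx$ may sit in or near the twist, so no half-tube $\subset \Omega$ contains $\bx$ directly. I would chain a short-time step with a long-time half-tube step via the Chapman-Kolmogorov identity $k(t, \bx, \bx) = \int k(t/2, \bx, \by)^2 \mathrm{d}\by$. Set $h := \max(|\bx_3|, A) + 1$ and fix: a bounded-volume ball $B$ centered at a reference point $\bz_0 \in \Omega^+$ with $(\bz_0)_3 = h$ and $(\bz_0)_1, (\bz_0)_2$ in the bulk of $r_{\theta_0}(\omega)$; and a target region $T \subset \Omega^+$ with $\by_3 \in (h + c\sqrt t, h + 2c\sqrt t)$ and $(\by_1, \by_2)$ in the bulk. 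First, \eqref{zhao_off} at time $1$ and bounded spatial separation gives $k(1, \bx, \bz) \geq c\, \rho(\bx)\rho(\bz)$ uniformly for $\bz \in B$. Second, for $\bz \in B$ and $\by \in T$ both points lie in $\Omega^+$, so Dirichlet bracketing against $\Omega^+$ and separation of variables give
\[
k(t/2 - 1, \bz, \by) \geq c\, \rho(\bz)\rho(\by)\, p(t/2 - 1, h - A, \by_3 - A),
\]
where $p(s, r, r') = (4\pi s)^{-1/2}\,\mathrm{e}^{-(r-r')^2/(4s)}[1 - \mathrm{e}^{-rr'/s}]$ is the half-line Dirichlet kernel on $(0, \infty)$. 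For $r \asymp 1 + |\bx_3|$, $r' \asymp \sqrt t$, $s \asymp t$ the argument $rr'/s \asymp (1+|\bx_3|)/\sqrt t \leq 1$ is bounded, so $p \asymp (1+|\bx_3|)/t$. Chaining via the semigroup property gives $k(t/2, \bx, \by) \geq c\, \rho(\bx)\rho(\by)(1 + |\bx_3|)/t$ for $\by \in T$.

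Plugging this into Chapman-Kolmogorov and using $|T| \asymp \sqrt t$ with $\rho(\by) \asymp 1$ on $T$ yields
\[
  k(t, \bx, \bx) \geq \int_T c\, \rho(\bx)^2\,(1+|\bx_3|)^2/t^2\, \mathrm{d}\by \geq c\, \rho(\bx)^2\,(1 + \bx_3^2)/t^{3/2},
\]
which is the desired bound. The main obstacle is the explicit half-line computation of step (b) in Regime II: one must identify the correct regime of $p(s, r, r')$ (small argument of $1 - \mathrm{e}^{-rr'/s}$) and pin down the placements of $B$ and $T$ inside $\Omega^+$ consistently, so that the chaining captures the correct $(1+|\bx_3|)$-weight rather than a constant one. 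Once these placements are fixed, the bracketing and Chapman-Kolmogorov are routine.
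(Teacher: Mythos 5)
Your proposal is correct in spirit and uses the same core tools as the paper's proof: Dirichlet bracketing against a straight half-tube, separation of variables with the explicit half-line reflection kernel, and Chapman--Kolmogorov chaining to reach points near the twist. The main difference is the case split. The paper splits by longitudinal position: for $|\bx_3| > R+1$ (always in a half-tube) a single direct bracketing, combined with $1-\mathrm{e}^{-z}\geq(1-\mathrm{e}^{-1})\min\{1,z\}$, produces \emph{both} branches of the $\min$ at once, while for $|\bx_3|\leq R+1$ a Chapman--Kolmogorov chain through a \emph{fixed} ball in the half-tube finishes the job. You split instead by which branch of the $\min$ is active. This is workable but less economical: in your Regime~II, $|\bx_3|$ may still be as large as $\sqrt{t-1}$, which is why your waypoint $\bz_0$ must depend on $\bx$ through $h=\max(|\bx_3|,A)+1$, whereas the paper's reference ball is completely fixed. (Also, when $\bx_3<-A$ the waypoint must be placed in $\Omega^-$ rather than $\Omega^+$; you hint at this via the ``mirror on the left'' remark, but without it the claimed ``bounded spatial separation'' for $k(1,\bx,\bz)$ fails.)

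There is one genuine gap: the claim that ``bounded $t$ is handled by the short-time estimate \eqref{zhang}'' does not close. That estimate applies only for $t\leq 1$, while your Regime~I falls back on it precisely when $t\in[1,A^2+1]$, and your Regime~II chain needs $t/2-1>0$, i.e.\ $t>2$. For $t$ in a bounded interval $[1,T_0]$ the required bound $k(t,\bx,\bx)\geq c\,\rho^2(\bx)$ still needs a short Chapman--Kolmogorov argument of its own (for instance, route through a fixed ball at a fixed intermediate time, exactly as in the paper's treatment of $|\bx_3|\leq R+1$); citing \eqref{zhang} at $t=1$ and invoking monotonicity of $t\mapsto k(t,\bx,\bx)$ goes the wrong way. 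Once that is patched, the remaining Regime~II steps --- identifying $p(s,r,r')\asymp(1+|\bx_3|)/t$ in the small-argument range $rr'/s\leq 1$ and the volume count $|T|\asymp\sqrt t$ --- are correct and yield the $(1+\bx_3^2)\,t^{-3/2}$ rate.
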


\begin{proof}
We start by proving that for any $\bx\in\Omega$ with $|x_3| >R+1$
and any $t\geq 1$ we have
\begin{equation} \label{first-lowerb}
k(t,\bx,\bx) \geq \, C\, \rho(\bx)^2 \, t^{-\frac 12}\,
\min\Big\{1,\, \frac{\bx_3^2}{t}\, \Big\}, \qquad C>0.
\end{equation}
Suppose that $\bx_3<-(R+1)$. To get a lower bound on $k(t,\bx,\bx)$
we impose additional Dirichlet boundary conditions at    $\omega_{-R}$,    and denote by $\widetilde k(t,\bx,\by)$ the heat kernel of
the    Laplacian  on $\gw \times (-\infty,-R))$.    In view of the reflection principle,
see e.g. \cite[Section~4.1]{da} and the ultracontractivity of $\mathrm{e}^{t \Delta^D_\omega}$, we get
\begin{align} \label{refl}
 k(t,\bx,\by) & \geq \widetilde
k(t,\bx,\by) \nonumber\\
&= \frac{1}{\sqrt{4\pi t}}\, \,
\Big(\mathrm{e}^{-\frac{(\bx_3-\by_3)^2}{4t}}-\mathrm{e}^{-\frac{(\bx_3+\by_3+2R)^2}{4t}}
\Big)\, \sum_{j\geq 1}\, \mathrm{e}^{(E_1-E_j)t}\, \psi_j(\bx_1,\bx_2)
\psi_j(\by_1,\by_2)\nonumber\\ &\geq C\frac{1}{\sqrt{4\pi t}}\, \,
\Big(\mathrm{e}^{-\frac{(\bx_3-\by_3)^2}{4t}}-\mathrm{e}^{-\frac{(\bx_3+\by_3+2R)^2}{4t}}\Big)
\psi_1(\bx_1,\bx_2)\psi_1(\by_1,\by_2)
\end{align}
for all $\by\in\Omega$ with $\by_3< -R-1$. Using the inequality
\begin{equation} \label{aux-eq}
1-\mathrm{e}^{-z}\, \geq (1-\mathrm{e}^{-1})\, \min\{ 1,\, z\}, \quad z\geq 0
\end{equation}
we thus get
\[
k(t,\bx,\bx) \geq   c   \frac{1-\mathrm{e}^{-1}}{\sqrt{4\pi}}\, \,
\psi_1^2(\bx_1,\bx_2)\, \, t^{-\frac 12}\, \min\Big\{1,\,
\frac{(\bx_3+R)^2}{  t  }\, \Big\}.
\]
Taking into account Lemma \ref{dist} and the elementary inequality
$$
(\bx_3+R)^2\geq \left(\frac{\bx_3}{R+1}\right)^2 \qquad \forall\, \bx_3 <-R-1,
$$
we obtain \eqref{first-lowerb} for $\bx_3 < -R-1$.
The proof of the corresponding lower bound for $\bx_3 >R+1$ is
completely analogous.

In order to treat the case $|x_3| \leq R+1$, we fix a $\by_0$ such
that $(\by_0)_3<-(R+1)$, and a number $\varepsilon <
\min\{\rho^2(\by_0), 1\}/4$. We then use the semigroup property to
get, for any $t>1$:
\[
\begin{aligned}
k(t,\bx,\bx)&=\int_{\Omega\times\Omega} k\Big(\frac13,\bx,\by\Big)k
\Big(t-\frac23,\by,\bz\Big)k\Big(\frac13,\bz,\bx\Big)\, {\rm d}\by\,{\rm
d}\bz \\
&\geq\int_{B(\by_0,\varepsilon)\times B(\by_0,\varepsilon)}
k\Big(\frac13,\bx,\by\Big) \widetilde k  \Big(t-\frac23,\by,\bz\Big)k\Big(\frac13,\bz,\bx\Big)\, {\rm d}\by\,{\rm
d}\bz .
\end{aligned}
\]
To bound from below the terms involving the time $s=1/3$, we use
Zhang's off--diagonal lower bound \eqref{zhao_off}. From the choice of $\by_0$ and $\varepsilon$ and
it follows that
$$
k\Big(\frac13,\bx,\by\Big) \geq C_1(\by_0,\varepsilon)\,
\rho(\bx),\quad k\Big(\frac13,\bz,\bx\Big) \geq
C_2(\by_0,\varepsilon)\, \rho(\bx), \quad \forall\, \by,\bz \in
B(\by_0,\varepsilon).
$$
Here we used the fact that $|\bx-\by|^2 +|\bx-\bz|^2$ is bounded
from above since $|x_3| \leq R+1$. Hence
\[
k(t,\bx,\bx) \geq \, C\rho^2(\bx)\int_{B(\by_0,\varepsilon)\times
B(\by_0,\varepsilon)} \widetilde
k\big(t-\frac23,\by,\bz\big)\, {\rm d}\by\,{\rm
d}\bz\
\]
>From \eqref{refl} and \eqref{aux-eq} we get
$$
\widetilde k\big(t-\frac23,\by,\bz\big) \geq C\, t^{-\frac 12}\,
\min\Big\{1,\, \frac{1+\bx_3^2}{t}\, \Big\} \quad \forall\,
\by,\bz \in B(\by_0,\varepsilon),
$$
which concludes the proof.
\end{proof}

\begin{remark}
In view of \eqref{first-lowerb} it follows that the quadratic growth
of the weight $(1+\bx_3^2)$ in \eqref{main-ub-eq} is sharp.    The lower
bound \eqref{first-lowerb} holds also for $t\leq 1$. However, for
small times the bounds \eqref{zhang} proved in \cite{zhang} are
sharper.
\end{remark}

It is also worth noticing that, because of a well--known
probabilistic interpretation of the Dirichlet heat kernel, the above
results can be reformulated in terms of the survival probability of
the Brownian bridge killed upon exiting $\Omega$. In fact we have
the following nonstandard asymptotic result.

\begin{corollary}
Let $\{X_s\}_{s\geq 0}$ be the Brownian loop process joining $x$ to
itself in time $t$ and let $P^{t,x,x}$ be the conditional Wiener
measure, normalized so that its total mass coincides with the free
heat kernel on ${\mathbb R}^3$. Then there exist strictly positive
constants $c_1,c_2$ such that, for all $\bx\in \Omega$ :
\begin{align*}
c_1\, (1+|\bx_3|^2)\, \rho^2(\bx)\,
& \, \leq \, \liminf_{t\to+\infty}
\Big[t^{\frac32}\, \mathrm{e}^{E_1t}\, P^{t,\bx,\bx}\left(X_s\in\Omega ~~\forall
s\in[0,t]\right)\big] \\
&\, \leq\, \limsup_{t\to+\infty} \Big[t^{\frac32}\, \mathrm{e}^{E_1t}\,
P^{t,\bx,\bx}\left(X_s\in\Omega ~~\forall s\in[0,t]\right)\Big]\\
&\leq\,  c_2\, (1+|\bx_3|^2)\, \rho^2(\bx).
\end{align*}
\end{corollary}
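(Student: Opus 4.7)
The plan is to translate the probabilistic statement into a statement about the Dirichlet heat kernel on $\Omega$ via Feynman--Kac, and then invoke Theorem~\ref{main-result} directly.

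First, recall the classical Feynman--Kac representation of the Dirichlet heat kernel: with the chosen normalization of $P^{t,\bx,\bx}$ (total mass equal to the free heat kernel $p(t,\bx,\bx)=(4\pi t)^{-3/2}$, so that $P^{t,\bx,\bx}=p(t,\bx,\bx)\,\bar P^{t,\bx,\bx}$ for $\bar P^{t,\bx,\bx}$ the usual probability measure on Brownian loops at $\bx$), one has
\begin{equation*}
P^{t,\bx,\bx}\bigl(X_s\in\Omega\ \forall\,s\in[0,t]\bigr) \;=\; \mathrm{e}^{t\Delta^D_\Omega}(\bx,\bx) \;=\; \mathrm{e}^{-E_1 t}\, k(t,\bx,\bx),
\end{equation*}
where in the last equality I use the definition \eqref{heat-kernel} of $k$. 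Consequently,
\begin{equation*}
t^{3/2}\,\mathrm{e}^{E_1 t}\, P^{t,\bx,\bx}\bigl(X_s\in\Omega\ \forall\,s\in[0,t]\bigr) \;=\; t^{3/2}\, k(t,\bx,\bx).
\end{equation*}

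Second, I would apply Theorem~\ref{main-result} to the right-hand side. Fix $\bx\in\Omega$. For $t\geq 1+\bx_3^2$ the minimum in \eqref{two-sided-eq} is attained by the first argument, so the two-sided bound reads
\begin{equation*}
c^{-1}\,\frac{\rho^2(\bx)(1+\bx_3^2)}{t^{3/2}} \;\leq\; k(t,\bx,\bx) \;\leq\; c\,\frac{\rho^2(\bx)(1+\bx_3^2)}{t^{3/2}}.
\end{equation*}
Multiplying by $t^{3/2}$ gives uniform-in-$t$ bounds (for all $t$ large enough, depending on $\bx$) on $t^{3/2}k(t,\bx,\bx)$ by $c^{-1}\rho^2(\bx)(1+\bx_3^2)$ from below and $c\,\rho^2(\bx)(1+\bx_3^2)$ from above. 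Taking $\liminf$ and $\limsup$ as $t\to\infty$ yields the claimed inequalities with $c_1:=c^{-1}$ and $c_2:=c$.

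There is no real obstacle: once one accepts the Feynman--Kac identity (which is standard and merely requires being careful about the chosen normalization of the bridge measure), the corollary is just a restatement of Theorem~\ref{main-result} in the asymptotic regime $t\gg 1+\bx_3^2$. The reason the statement is in terms of $\liminf$/$\limsup$ rather than a true limit is precisely that Theorem~\ref{main-result} provides only a two-sided comparison with possibly different constants, not an exact asymptotic equivalence.
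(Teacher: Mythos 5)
Your proof is correct and follows the same route the paper implicitly takes: the paper states this corollary without an explicit proof, precisely because it is, as you say, a direct translation of the on-diagonal two-sided bound of Theorem~\ref{main-result} via the Feynman--Kac representation $P^{t,\bx,\bx}\bigl(X_s\in\Omega\ \forall s\in[0,t]\bigr)=\mathrm{e}^{t\Delta^D_\Omega}(\bx,\bx)=\mathrm{e}^{-E_1t}k(t,\bx,\bx)$. Your observation that for fixed $\bx$ the minimum in \eqref{two-sided-eq} is eventually $(1+\bx_3^2)/t$ once $t\ge 1+\bx_3^2$, and that the $\liminf$/$\limsup$ form is needed only because the comparison constants may differ, is exactly the right reading of the statement.
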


\smallskip

\subsection{Generalization}\label{sec-general}

As already mentioned in the Introduction, the method that we use to prove Theorem \ref{main-result} is applicable to a wide class of operators in $L^2(\Omega_0)$. To be more specific, let us consider nonnegative uniformly elliptic operators of the form
\begin{equation} \label{eq-L}
L\,f=  -\sum_{i,j=1}^3  \partial_{x_i} (a_{ij} (x) \partial_{x_j}\, f) + V(x) f,
\end{equation}
where $a_{ij}$ and $V$ are real valued functions, and $L$ is understood as the Friedrichs extension of the differential operator on the right hand side defined originally on $C_0^\infty(\Omega_0)$. We suppose that $a:=\big(a_{i j}\big)$  and $V$ are sufficiently smooth in $\overline{\Gw_0}$ and that $L = -\Delta -E_1$ for $|x_3|$ large enough. The arguments in the proof of Theorem  \ref{thm-lowerb-1} then immediately give a lower bound on $\mathrm{e}^{-t L}(x,x)$ given by the right hand side of \eqref{main-ub-eq} with $\bx$ replaced by $x$ and $\rho(\bx)$ replaced by dist$(x,\partial\Omega_0)$.

On the other hand, if we also suppose that $L$ is subcritical, then by Theorem \ref{greenf-equiv}  and \cite[Lemma 2.4]{p88} it follows that there exist smooth positive functions $u_j,\, j=0,1,2,$ such that $L\, u_j = 0$ in $\Gw_0$, and $u_j \asymp w_j$. Moreover, by the uniform ellipticity of $L$ we have
$$
( u_j\, \varphi, L (u_j\, \varphi) )_{L^2(\Omega_0)} \, =  \int_{\Omega_0} ( \nabla\varphi \cdot( a \nabla\varphi) )\, u_j^2\,  \mathrm{d}x \, \geq \, c \int_{\Omega_0} |\nabla\varphi|^2\, w_j^2\, \mathrm{d}x,
$$
for $j=0,1,2$. Hence a straightforward modification of Propositions \ref{hk-twisted} and \ref{non-twist-ub} gives

\begin{theorem} \label{thm-generalization}
Let $L$  be a uniformly elliptic operator of the form \eqref{eq-L}, and assume that
$a_{ij}$ and $V$ are H\"older continuous in $\overline{\Omega_0}$, and that $L=-\Delta-E_1$ in $\{(x',x_3)\in \Gw_0 \mid |x_3|>R \}$ for some $R>0$. Assume  further that $L$ is subcritical in $\Omega_0$. Then
\begin{equation}
\exp(-t L)(x,x)  \ \asymp  \ \frac{\mathrm{dist}(x,\partial\Omega_0)^2}{\sqrt t}\, \,
\min\Big\{\frac{1+x_3^2}{t}, \, \, 1\Big\}, \qquad \forall\ t \geq 1, \quad \forall\ x\in \Omega_0.
\end{equation}
\end{theorem}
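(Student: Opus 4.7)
The plan is to adapt the proof of Theorem \ref{main-result} nearly verbatim, replacing $H_\theta$ by $L$ and invoking the subcriticality of $L$ at precisely the two points where subcriticality of $H_\theta$ was used in Sections \ref{sec-upperb} and \ref{lowerb-sect}. Both the upper and the lower bounds split into the same cases as before.

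For the upper bound, I would first apply Theorem \ref{greenf-equiv} to the pair of subcritical operators $L$ and the reference operator $A$ of \eqref{model}, which coincide with $-\Delta-E_1$ for $|x_3|$ larger than a common constant, obtaining $G_L \asymp G_A$ off the diagonal of $\Omega_0 \times \Omega_0$. Combined with \cite[Lemma~2.4]{p88}, exactly as in the proof of Lemma \ref{gr-state}, this furnishes three positive $C^2$ solutions $u_0,u_1,u_2$ of $Lu=0$ in $\Omega_0$ that vanish on $\partial\Omega_0$ and satisfy $u_j(x) \asymp w_j(x)$ pointwise, where the $w_j$ are the separated harmonics defined in \eqref{w-def}. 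I would then mimic the constructions of Propositions \ref{hk-twisted} and \ref{non-twist-ub} by introducing the unitary maps $\U_j^L : L^2(\Omega_0) \to L^2(\Omega_0,u_j^2)$ given by $(\U_j^L\varphi)(x)=u_j^{-1}(x)\varphi(x)$ and setting $B_j^L := \U_j^L\, L\, (\U_j^L)^{-1}$. The associated quadratic form is $\int_{\Omega_0}(\nabla\varphi\cdot a\,\nabla\varphi)\,u_j^2\,\mathrm{d}x$, which by uniform ellipticity of $a$ and $u_j \asymp w_j$ is bounded below by $c\int_{\Omega_0}|\nabla\varphi|^2 w_j^2\,\mathrm{d}x$. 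This is precisely the ingredient required so that the Nash-type estimates \eqref{nash-gen-eq} and \eqref{nash-gen-eq-2} — which are purely functional inequalities involving only the weights $w_j$ and not the operator — apply to $B_j^L$. Coulhon's equivalence \cite[Proposition~II.1]{cou} then yields ultracontractive bounds $\|\mathrm{e}^{-tB_0^L}\|_{L^1(\Omega_0,u_0^2)\to L^\infty(\Omega_0,u_0^2)} \leq m_\lambda(a_0 t)$ and $\|\mathrm{e}^{-tB_j^L}\|_{L^1(\Omega_0,u_j^2)\to L^\infty(\Omega_0,u_j^2)} \leq \mu_\lambda(a_j t)$ for $j=1,2$. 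Transferring back through $\mathrm{e}^{-tL}(x,x)=u_j(x)^2\,\mathrm{e}^{-tB_j^L}(x,x)$ and taking the minimum of the three resulting estimates produces the upper half of the claimed $\asymp$, with $w_j(x)\asymp\mbox{dist}(x,\partial\Omega_0)\,g_j(x_3)$ supplying the correct geometric prefactor via the Hopf argument of Lemma \ref{dist}.

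For the lower bound I would follow the proof of Theorem \ref{thm-lowerb-1} line by line. In the region $|x_3|>R+1$ the hypothesis $L=-\Delta-E_1$ permits the imposition of additional Dirichlet conditions on the cross-section $\omega_{-R}$ (respectively $\omega_R$), confining the kernel to a straight half-cylinder on which it factors into $\mathrm{e}^{t\Delta^D_\omega}\mathrm{e}^{tE_1}$ and the reflected one-dimensional heat kernel on a half-line; the reflection principle together with \eqref{aux-eq} and the eigenfunction expansion in $\omega$ reproduce \eqref{first-lowerb} verbatim, with $\rho(\bx)$ replaced by $\mbox{dist}(x,\partial\Omega_0)$. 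For $|x_3|\leq R+1$, the semigroup decomposition of Theorem \ref{thm-lowerb-1}, combined with the short-time two-sided estimate \eqref{zhao_off} applied to $L$ — which holds because $a_{ij},V$ are H\"older continuous up to $\overline{\Omega_0}$ and $\partial\omega$ is $C^2$ — propagates the lower bound from a fixed point $y_0$ with $(y_0)_3<-(R+1)$ back to $x$.

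The principal obstacle is the existence of the positive harmonics $u_j$ with the correct two-sided asymptotics $u_j \asymp w_j$: this is where Theorem \ref{greenf-equiv} does genuine work, since one can no longer separate variables in $L$ to write the Martin boundary representatives explicitly. Once the Green-function equivalence $G_L \asymp G_A$ has been transferred from $A$, whose two minimal positive harmonics are explicit via \eqref{gj-asymp}–\eqref{w-def}, every subsequent step — conjugation, weighted Nash inequality, Coulhon ultracontractivity, reflection-principle lower bound — is a routine modification of the arguments already carried out for $H_\theta$.
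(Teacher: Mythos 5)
Your proposal is correct and follows essentially the same route as the paper's own (quite terse) proof: Theorem~\ref{greenf-equiv} together with \cite[Lemma~2.4]{p88} to produce positive $L$-harmonics $u_j\asymp w_j$, uniform ellipticity to bring the conjugated quadratic form over the weighted Dirichlet form $\int|\nabla\varphi|^2 w_j^2$, the unchanged weighted Nash inequalities and Coulhon's equivalence for the upper bound, and the reflection-principle plus semigroup decomposition of Theorem~\ref{thm-lowerb-1} for the lower bound. Your elaboration of the details the paper leaves implicit (in particular that the Nash inequalities \eqref{nash-gen-eq}--\eqref{nash-gen-eq-2} depend only on the weights $w_j$, and that the short-time two-sided estimate for $\mathrm{e}^{-tL}$ requires the H\"older regularity hypothesis) is accurate.
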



\section{Integral estimates}
\label{sec-l2}

In this section we will prove certain integral estimates for the
semigroup $\mathrm{e}^{t\, (\Delta^D_\Omega+E_1)}$. We start with a
simple consequence of Theorem~\ref{main-ub}.

\begin{corollary} \label{interpolate}
There exists a constant $C$ such that for any $\mu,\, \nu\in [0,1]$
and any $t\geq 1$ it holds
\begin{equation} \label{interpolate-eq}
k(t,\bx,\by) \, \leq  \, C\, \rho(\bx)\,
\rho(\by)\, (1+\bx_3^2)^{\frac{\mu}{2}}\,
(1+\by_3^2)^{\frac{\nu}{2}} \, \, t^{-\frac{1+\mu+\nu}{2}} .
\end{equation}
\end{corollary}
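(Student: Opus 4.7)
The plan is to obtain the result by interpolating between the two on-diagonal bounds proved in Theorem~\ref{main-ub} (which correspond to Propositions~\ref{hk-twisted} and~\ref{non-twist-ub}), and then upgrading to an off-diagonal estimate via Cauchy--Schwarz, separately in each variable. Concretely, Theorem~\ref{main-ub} gives, for $t\geq 1$, the two on-diagonal inequalities
\begin{equation*}
k(t,\bx,\bx)\,\leq\, C\,\rho^2(\bx)\,(1+\bx_3^2)\,t^{-3/2}\qquad\text{and}\qquad k(t,\bx,\bx)\,\leq\, C\,\rho^2(\bx)\,t^{-1/2}.
\end{equation*}

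The first step is to raise the first bound to the power $\mu$ and the second to the power $1-\mu$ and multiply; since both are genuine upper bounds on the same positive quantity $k(t,\bx,\bx)$, the resulting inequality
\begin{equation*}
k(t,\bx,\bx)\,\leq\, C\,\rho^2(\bx)\,(1+\bx_3^2)^{\mu}\,t^{-(1+2\mu)/2},\qquad \mu\in[0,1],
\end{equation*}
holds for all $t\geq 1$, with a constant independent of $\mu\in[0,1]$ (the $C^\mu\cdot C^{1-\mu}$ factor is bounded). The same interpolation applied at $\by$ with parameter $\nu\in[0,1]$ gives the analogous bound for $k(t,\by,\by)$.

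The second step uses the semigroup property and symmetry of $k$. Writing $k(t,\bx,\by)=\int_{\Omega}k(t/2,\bx,\bz)\,k(t/2,\bz,\by)\,\mathrm d\bz$ and applying Cauchy--Schwarz together with $\int k(t/2,\bx,\bz)^2\mathrm d\bz=k(t,\bx,\bx)$ yields the standard pointwise bound
\begin{equation*}
k(t,\bx,\by)\,\leq\,\sqrt{k(t,\bx,\bx)\,k(t,\by,\by)}.
\end{equation*}
Plugging in the two interpolated on-diagonal estimates and taking the square root turns the product of the spatial weights into $\rho(\bx)\rho(\by)(1+\bx_3^2)^{\mu/2}(1+\by_3^2)^{\nu/2}$ and combines the time exponents into $-(1+\mu+\nu)/2$, which is exactly the right-hand side of \eqref{interpolate-eq}.

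There is no real obstacle here: the only point that requires a moment of care is checking that the constant does not blow up as $(\mu,\nu)$ ranges over $[0,1]^2$, which is immediate since $\mu\mapsto C^\mu$ is bounded on a compact interval. Note that we do not need the Gaussian off-diagonal factor from Corollary~\ref{upperb-thm}; if one wanted to retain it, the same argument applied with Corollary~\ref{upperb-thm} in place of the bare on-diagonal bounds, together with the elementary inequality $|\bx-\bz|^2+|\bz-\by|^2\geq\tfrac12|\bx-\by|^2$, would produce an off-diagonal version of \eqref{interpolate-eq} with an extra factor $\mathrm e^{-|\bx-\by|^2/(C't)}$.
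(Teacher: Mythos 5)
Your proof is correct and follows essentially the same route as the paper: establish the standard Cauchy--Schwarz bound $k(t,\bx,\by)\leq\sqrt{k(t,\bx,\bx)\,k(t,\by,\by)}$ via the semigroup property, then feed in the on-diagonal upper bounds from Theorem~\ref{main-ub}. The only difference is cosmetic: the paper leaves the geometric-interpolation step ($\min\{a,b\}\leq a^{\mu}b^{1-\mu}$) implicit in the phrase ``apply Proposition~\ref{non-twist-ub} and Theorem~\ref{main-ub},'' whereas you spell it out, which is a reasonable clarification.
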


\begin{proof}
We recall the following well--known inequality:
\begin{equation} \label{semigroup}
  k(t,\bx,\by)   \leq \sqrt{k(t,\bx,\bx)}\, \, \sqrt{k(t,\by,\by)}\, .
\end{equation}
For the convenience of the reader we briefly recall the proof of
this fact. By the semigroup property and the symmetry of the heat
kernel
\begin{equation} \label{k^2}
k(2t,\bx,\bx)=\int_\Omega k(t,\bx,\by)^2\,{\rm d}y.
\end{equation}
Hence, again by the symmetry, and the semigroup property, and in light of the Cauchy--Schwarz inequality, we
get
\begin{align}
k(2t,\bx,\by)&=\int_\Omega k(t,\bx,\bz)k(t,\by,\bz)\,{\rm d}z\le
\Big(\int_\Omega k(t,\bx,\bz)^2\,{\rm d}z\Big)^{1/2}
\Big(\int_\Omega k(t,\by,\bz)^2\,{\rm d}z\Big)^{1/2} \nonumber\\
&=    k(2t,\bx,\bx)^{1/2}k(2t,\by,\by)^{1/2}    \label{double-t}
\end{align}
as claimed. It now remains to apply Proposition \ref{non-twist-ub}
and Theorem~\ref{main-ub}.
\end{proof}

\noindent Now let us introduce the following family of weighted
$L^p$ spaces:
$$
L^p_\beta(\Omega) := \Big\{ f\, :  \|f\|_{L^p_\beta(\Omega)} <
\infty \Big\}, \quad \|f\|_{L^p_\beta(\Omega)} :=
\Big(\int_\Omega | f | ^p\, (1+\bx_3^2)^{\beta}\, \mathrm{d}\bx \Big)^{\frac
1p},\quad \beta\in\R.
$$
With this notation we have

\begin{proposition} \label{prop-l2}
For any $\kappa \in [0,2]$ and any $\beta >(1+\kappa)/2$ there
exists $C= C(\beta, \kappa)$ such that
\begin{equation} \label{l2-estim}
\| \mathrm{e}^{t\, (\Delta^D_\Omega+E_1)}\|_{L^2_\beta(\Omega)\to
L^2(\Omega)} \, \leq \, C\, (1+t)^{-\frac{1+\kappa}{4}}\qquad
\forall\, t\geq 0.
\end{equation}
\end{proposition}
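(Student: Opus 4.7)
The strategy is to bound the operator norm via a Cauchy--Schwarz argument that reduces the problem to an integral of the on-diagonal heat kernel, and then to invoke the sharp estimate of Theorem \ref{main-ub}. For $t\geq 1$ and $f\in L^2_\beta(\Omega)$, I would split the kernel against the weight and apply Cauchy--Schwarz:
\begin{equation*}
\bigl|\mathrm{e}^{t(\Delta^D_\Omega+E_1)}f(\bx)\bigr|^2\leq \Bigl(\int_\Omega k(t,\bx,\by)^2\,(1+\by_3^2)^{-\beta}\,\mathrm{d}\by\Bigr)\,\|f\|^2_{L^2_\beta(\Omega)}.
\end{equation*}
Integrating in $\bx$ and exchanging the order of integration, together with the identity $\int_\Omega k(t,\bx,\by)^2\,\mathrm{d}\bx=k(2t,\by,\by)$ (a direct consequence of symmetry and the semigroup property of $k$, cf.\ the computation inside the proof of Corollary~\ref{interpolate}), the whole question reduces to estimating
\begin{equation*}
I(t) \,:=\, \int_\Omega (1+\by_3^2)^{-\beta}\,k(2t,\by,\by)\,\mathrm{d}\by.
\end{equation*}

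Next, I would apply Theorem~\ref{main-ub} to bound, for $t\geq 1/2$,
\begin{equation*}
k(2t,\by,\by)\leq C\,\rho^2(\by)\,\min\bigl\{(1+\by_3^2)\,t^{-3/2},\,t^{-1/2}\bigr\},
\end{equation*}
and integrate first in the transverse variables: since $\rho$ is bounded and $\omega$ is bounded, $\int_\omega \rho^2\,\mathrm{d}\by'$ is a finite constant, so $I(t)$ reduces to a one-dimensional integral in $s=\by_3$. Splitting the integration at $|s|=\sqrt t$ produces the two pieces $t^{-3/2}\int_{|s|\leq\sqrt t}(1+s^2)^{1-\beta}\,\mathrm{d}s$ and $t^{-1/2}\int_{|s|\geq\sqrt t}(1+s^2)^{-\beta}\,\mathrm{d}s$. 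A case analysis on $\beta$ (larger than, equal to, or smaller than $3/2$) then yields $I(t)\leq C\,t^{-\min(\beta,3/2)}$ for $t\geq 1$, with an additional $\log t$ only in the borderline case $\beta=3/2$.

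Since the hypothesis is $\beta>(1+\kappa)/2$ with $(1+\kappa)/2\in[1/2,3/2]$, this estimate is always at least as strong as $Ct^{-(1+\kappa)/2}$: when $\kappa=2$ the hypothesis forces $\beta>3/2$ and the log is avoided, while for $\kappa<2$ the strict inequality leaves margin to absorb any $\log t$ into a small positive power of $t$. For $t\in[0,1]$ the semigroup $\mathrm{e}^{t(\Delta^D_\Omega+E_1)}$ is a contraction on $L^2(\Omega)$ and, because $\beta\geq 0$, the embedding $L^2_\beta(\Omega)\hookrightarrow L^2(\Omega)$ has norm one, giving $\|\mathrm{e}^{t(\Delta^D_\Omega+E_1)}\|_{L^2_\beta\to L^2}\leq 1$ on that range. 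Gluing the two regimes yields the claimed bound $C(1+t)^{-(1+\kappa)/4}$.

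The hard part will be the critical case $\kappa=2$: at the borderline weight $\beta=3/2$ a naive estimate of the split integral develops a logarithmic divergence, which is precisely why the statement demands the strict inequality $\beta>(1+\kappa)/2$. Careful bookkeeping of the threshold $|s|=\sqrt t$ in the split integral is what ensures the exponent $(1+\kappa)/2$ is reached and that no logarithmic loss remains in the final bound.
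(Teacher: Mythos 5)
Your proof is correct and follows essentially the same route as the paper's: Cauchy--Schwarz plus the semigroup identity $\int_\Omega k(t,\bx,\by)^2\,\mathrm{d}\bx = k(2t,\by,\by)$ reduces the problem to $\int_\Omega k(2t,\by,\by)(1+\by_3^2)^{-\beta}\,\mathrm{d}\by$, which is then bounded via the on-diagonal estimate of Theorem~\ref{main-ub}. The only (cosmetic) difference is that the paper interpolates at the kernel level via Corollary~\ref{interpolate} with $\mu=\nu=\kappa/2$, replacing $\min\{(1+s^2)t^{-3/2},t^{-1/2}\}$ by $(1+s^2)^{\kappa/2}t^{-(1+\kappa)/2}$ so that a single convergent integral gives the exponent directly, whereas you keep the $\min$ and split the $s$-integral at $|s|=\sqrt{t}$ and do the case analysis by hand; both yield the same power, and your handling of the borderline logarithm at $\beta=3/2$ is correct.
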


\begin{proof}
Let $f\in L^2(\Omega)$. In view of \eqref{k^2}, Cauchy-Schwarz inequality and \eqref{interpolate-eq} applied with $\mu=\nu =\kappa/2$ we get
\begin{align}
\| \mathrm{e}^{t\, (\Delta^D_\Omega+E_1)}\, f \|^2_{L^2(\Omega)} &
 \leq \ \|f\|^2_{L^2_\beta(\Omega)}\!  \int_{\Omega\times\Omega}\! k(t, \bx, \by)^2 \, (1+\by_3^2)^{-\beta}
\, \mathrm{d}\bx\, \mathrm{d}\by \nonumber \\
& = \|f\|^2_{L^2_\beta(\Omega)}\!  \int_{\Omega}\! k(2t, \by, \by) \, (1+\by_3^2)^{-\beta}
\, \mathrm{d}\by\, \leq \tilde{C}\,
t^{-\frac{1+\kappa}{2}}\, \, \|f\|^2_{L^2_\beta(\Omega)} \nonumber
\end{align}
for all $t\geq 1$. This shows that
\begin{equation} \label{norm}
\| \mathrm{e}^{t\, (\Delta^D_\Omega+E_1)}\|_{L^2_{\beta}(\Omega)\to
L^2(\Omega)}  \, \leq \, C\, t^{-\frac{1+\kappa}{4}} \qquad \forall \ t \geq 1.
\end{equation}
Equation \eqref{l2-estim} then follows from
\eqref{norm} and from the fact that  $\mathrm{e}^{t\,
(\Delta^D_\Omega+E_1)}$ is, for all $t\geq 0$, a contraction from
$L^2(\Omega)$ to $L^2(\Omega)$.
\end{proof}

\begin{remark}
Proposition~\ref{prop-l2} with $\kappa=2$ extends inequality \eqref{kz} to the case $a=3/4$ and
at the same time allows for a much slower growth of the integral weight than the one used in \eqref{kz},
namely $(1+\bx_3^2)^{\beta}$ with any $\beta>3/2$.
On the other hand, the corresponding estimate in \cite{kz}
was obtained under weaker regularity assumptions on $\theta$.
\end{remark}

\noindent The following estimate is a version of Proposition \ref{prop-l2} in suitable $L^1$ and $L^\infty$ spaces. In order to state it we  introduce   for $\beta\geq 0$ the spaces
$$
 L^\infty_{-\beta}(\Omega) = \big\{ f\, :  \|f\|_{L^\infty_{-\beta}(\Omega)} := \| (1+\bx_3^2)^{-\beta}\, f\|_{L^\infty(\Omega)}  < \infty \big\} .
$$

\noindent We then have

\begin{theorem} \label{thm-lp}
For any  $\beta \in [0,1/2]$  we have
\begin{equation} \label{lp-estim}
\| \mathrm{e}^{t\, (\Delta^D_\Omega+E_1)}\|_{L^2(\Omega)\to L^\infty_{-\beta}(\Omega)} = \| \mathrm{e}^{t\, (\Delta^D_\Omega+E_1)}\|_{L^1_\beta(\Omega)\to
L^2(\Omega)}  \,\asymp \, t^{-\frac 14-\beta} \qquad \forall \ t \geq 1.
\end{equation}
\end{theorem}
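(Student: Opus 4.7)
The plan is to reduce the two operator norms in \eqref{lp-estim} to a single one via duality and then to invoke the two-sided heat kernel estimates already at our disposal. First, since $L^\infty_{-\beta}(\Omega)$ is the dual of $L^1_\beta(\Omega)$ under the pairing $\langle f,g\rangle = \int_\Omega fg\,\mathrm{d}\bx$ (the inequality $|\langle f,g\rangle|\leq \|f\|_{L^1_\beta}\|g\|_{L^\infty_{-\beta}}$ is immediate), and since the semigroup $T_t:=\mathrm{e}^{t(\Delta^D_\Omega+E_1)}$ is self-adjoint on $L^2(\Omega)$, the identity $\|T\|_{X\to Y}=\|T^*\|_{Y^*\to X^*}$ gives
\[
\|T_t\|_{L^1_\beta(\Omega)\to L^2(\Omega)}=\|T_t\|_{L^2(\Omega)\to L^\infty_{-\beta}(\Omega)},
\]
so it suffices to estimate the second quantity.

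For the upper bound I would combine Cauchy--Schwarz with the semigroup identity $\int_\Omega k(t,\bx,\by)^2\,\mathrm{d}\by = k(2t,\bx,\bx)$ to obtain the pointwise inequality $|(T_tf)(\bx)|\leq \|f\|_{L^2(\Omega)}\sqrt{k(2t,\bx,\bx)}$. Plugging in Theorem~\ref{main-ub} and using the elementary interpolation $\min\{a,b\}\leq a^{2\beta}b^{1-2\beta}$, valid for $\beta\in[0,1/2]$, with $a=(1+\bx_3^2)t^{-3/2}$ and $b=t^{-1/2}$, yields
\[
\min\{(1+\bx_3^2)t^{-3/2},\,t^{-1/2}\}\leq (1+\bx_3^2)^{2\beta}\,t^{-1/2-2\beta}.
\]
Taking square roots, multiplying by the weight $(1+\bx_3^2)^{-\beta}$, and using that $\rho$ is bounded on $\Omega$, I arrive at $(1+\bx_3^2)^{-\beta}|(T_tf)(\bx)|\leq C\,t^{-1/4-\beta}\|f\|_{L^2(\Omega)}$; a supremum over $\bx$ then delivers the desired upper bound on the operator norm.

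For the matching lower bound I would exhibit a concrete test function localized at the correct height. Fix $\bx_0:=(0,0,\sqrt{t})$, which lies in $\Omega$ because $0\in\omega$, and which satisfies $\rho(\bx_0)\asymp 1$ uniformly in $t$ by Lemma~\ref{dist}. Set
\[
f_t(\by):=\frac{k(t,\by,\bx_0)}{\sqrt{k(2t,\bx_0,\bx_0)}},
\]
so that $\|f_t\|_{L^2(\Omega)}=1$ by the semigroup identity. Then $(T_tf_t)(\bx_0)=\sqrt{k(2t,\bx_0,\bx_0)}$, and the choice $(\bx_0)_3^2=t$ makes both branches of the minimum in Theorem~\ref{thm-lowerb-1} comparable, giving $k(2t,\bx_0,\bx_0)\gtrsim t^{-1/2}$; consequently
\[
\|T_tf_t\|_{L^\infty_{-\beta}(\Omega)}\geq (1+t)^{-\beta}\sqrt{k(2t,\bx_0,\bx_0)}\gtrsim t^{-1/4-\beta}.
\]

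The only genuine technical point is the interpolation inequality $\min\{a,b\}\leq a^{2\beta}b^{1-2\beta}$: the range $\beta\in[0,1/2]$ is precisely the one that forces the exponent $2\beta$ on $(1+\bx_3^2)$ arising from the upper bound on $k(2t,\bx,\bx)$ to be absorbed exactly by the weight $(1+\bx_3^2)^{-\beta}$ defining $L^\infty_{-\beta}$, producing a clean power of $t$. Everything else is a direct assembly of Theorems~\ref{main-ub} and~\ref{thm-lowerb-1} together with the self-adjointness of the semigroup.
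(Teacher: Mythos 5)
Your argument is correct and mirrors the paper's proof step by step: duality for the equality of the two norms, Cauchy--Schwarz together with the interpolated on-diagonal bound (your inequality $\min\{a,b\}\le a^{2\beta}b^{1-2\beta}$ reproduces the $\mu=\nu=2\beta$ case of Corollary~\ref{interpolate}) for the upper bound, and a test function concentrated where $\bx_3^2\asymp t$ with $\rho\asymp 1$ for the lower bound. The only (cosmetic) difference is that you test against the genuine $L^2$ function $k(t,\cdot,\bx_0)/\sqrt{k(2t,\bx_0,\bx_0)}$ and estimate $\|T_t\|_{L^2\to L^\infty_{-\beta}}$ directly, whereas the paper plugs a Dirac delta into $\|T_t\|_{L^1_\beta\to L^2}$; by duality these are the same computation, and your variant is marginally cleaner since it avoids the distributional test object.
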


\begin{proof}
The equality in \eqref{lp-estim} follows by duality using the scalar product $(u,v)= \int_{\Omega} \bar u v\, \mathrm{d}\bx$ in $L^2(\Omega)$.
Let $f\in L^2(\Omega)$. By \eqref{k^2}, Cauchy-Schwarz inequality and estimate
\eqref{interpolate-eq} with $\mu=\nu =2\beta$ we obtain
\begin{align*}
\| \mathrm{e}^{t\, (\Delta^D_\Omega+E_1)}\, f \|_{L^\infty_{-\beta}(\Omega)} & \leq    \|f\|_{L^2(\Omega)}\, \|(1+\bx_3^2)^{-\beta}\, \sqrt{k(2t,\bx,\bx)} \, \|_{L^\infty(\Omega)}    \\
& \leq \, C_\beta\ t^{-\frac 14-\beta} \ \|f\|_{L^2(\Omega)}.
\end{align*}
This proves the upper bound in \eqref{lp-estim}. To prove
the lower bound let us consider  a generalized function $f_t$ given by a Dirac delta distribution placed in a point $\bz(t)\in\Omega$ such that
$1+\bz_3^2(t) = 2 t$ and $\rho(\bz (t))>\varepsilon>0$ for all t. From \eqref{main-ub-eq} and  \eqref{k^2} it then follows that
$$
 \frac{\| \mathrm{e}^{t\, (\Delta^D_\Omega+E_1)}\, f_t \|_{L^2(\Omega)} }{\|f_t\|_{L^1_\beta(\Omega)}} \, =\,  \frac{\sqrt{k(2t, \bz(t), \bz(t))}}{(1+\bz_3^2(t))^\beta}
 \, \geq \, C\, t^{-\frac 14-\beta}.
$$
\end{proof}

\begin{remark}
In the absence of twisting we have
\[\begin{split}
 \| \mathrm{e}^{t\, (\Delta^D_{\Omega_0}+E_1)}\|_{L^2(\Omega_0)\to L^\infty_{-\beta}(\Omega_0)} =  \| \mathrm{e}^{t\, (\Delta^D_{   \Omega_0   }+E_1)} \|_{L^1_\beta(\Omega_0)\to
L^2(\Omega_0)} \ \asymp \ t^{-\frac 14}\\
\forall \ t \geq 1, \; \forall\ \beta \geq 0,
\end{split}
\]
which can be easily derived from the explicit expression for the integral kernel of  $\mathrm{e}^{t\, (\Delta^D_{   \Omega_0   }+E_1)}$ in $\Omega_0$. Notice also that proceeding as in the proof of Theorem \ref{thm-lp} but choosing $\bz(t)=\mathrm{constant}$ shows that no matter how large $\beta$ is, the left hand side of \eqref{lp-estim} will never decay faster than $t^{-\frac 34}$.
\end{remark}

\section{Spectral estimates and Sobolev inequality}
\label{sec-clr-sobol}

Let $V: \Omega\to \R$ be a real valued measurable function and
consider the Schr\"odinger operator
$$
-\Delta_\Omega^D - V \qquad \text{in\, } L^2(\Omega)
$$
associated with the quadratic form
\begin{equation}
\int_\Omega\, \left(|\nabla u|^2 -V |u|^2\right)\, \mathrm{d}\bx, \quad u\in
H^1_0(\Omega).
\end{equation}
Let us denote by $N(-\Delta_\Omega^D - V, s)$ the number of discrete
eigenvalues of $-\Delta_\Omega^D - V$ less than $s$ (counted with
multiplicity). If $V=0$, then of course $N(-\Delta_\Omega^D,
E_1)=0$. In the problems concerning spectral estimates one usually tries to control
$N(-\Delta_\Omega^D - V, E_1)$ in terms of $V$.

Without loss of generality we may assume that $V\geq 0$ (otherwise
we replace $V$ by $V_+$). By the Lieb's inequality, see \cite{lieb}, \cite{fls}, \cite{roso2}, we have
\begin{align}
N(-\Delta_\Omega^D - V, E_1) & =N(-\Delta_\Omega^D -E_1-V, 0) \, \nonumber \\
& \leq \, M_{b}\, \int_\Omega\, \int_{0}^\infty  k(t,\bx,\bx)\,
t^{-1}\, (t\, V(\bx)-b)_+\, \mathrm{d}t\, \mathrm{d}\bx, \label{lieb}
\end{align}
where $b>0$ is arbitrary and
\begin{equation*}
M_b= \, \big( \mathrm{e}^{-b}-b\, \int_b^\infty\, s^{-1}\, \mathrm{e}^{-s}\,
\mathrm{d}s\big)^{-1}.
\end{equation*}

\noindent From inequality \eqref{zhang} and Theorem~\ref{main-ub}
follows that there exists a constant $C$ such that for all $t>0$
and all $\bx\in\Omega$ it holds
 \begin{equation} \label{global}
k(t,\bx,\bx) \, \leq  \, C\, (1+\bx_3^2)\, \, t^{-\frac 32} .
\end{equation}
A direct application of \eqref{global} and \eqref{lieb} then gives

\begin{theorem} \label{CLR}
There exists a positive constant $L$ such that
\begin{equation} \label{CLR-eq}
N(-\Delta_\Omega^D - V, E_1) \, \leq \, L \int_\Omega\, V^{\frac
32}(\bx)\, (1+\bx_3^2)\, \mathrm{d}\bx
\end{equation}
holds for all $0\leq V\in L^{3/2}(\Omega, (1+\bx_3^2))$.
\end{theorem}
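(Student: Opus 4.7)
The proof is essentially a direct computation: plug the global heat kernel bound \eqref{global} into the Lieb bound \eqref{lieb}, carry out the $t$-integration, and optimize in the free parameter $b>0$. Since the assumption $V\geq 0$ has already been justified (replacing $V$ by $V_+$ only enlarges the counting function), no preliminary reduction is needed.

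The plan is the following. First I would substitute \eqref{global} directly into \eqref{lieb} to get
\begin{equation*}
N(-\Delta_\Omega^D - V, E_1) \, \leq \, C\, M_b \int_\Omega (1+\bx_3^2)\, \Big(\int_0^\infty t^{-5/2}\, (t V(\bx) - b)_+\, \mathrm{d}t\Big)\, \mathrm{d}\bx.
\end{equation*}
The inner integral is finite: although $t^{-5/2}$ is non-integrable at $0$, the factor $(tV-b)_+$ vanishes for $t<b/V$, cutting off the singularity. For $V(\bx)>0$ I would change variables via $s=tV(\bx)/b$ to obtain
\begin{equation*}
\int_0^\infty t^{-5/2}(tV-b)_+\, \mathrm{d}t \, = \, \frac{V^{3/2}}{\sqrt{b}}\int_1^\infty \big(s^{-3/2}-s^{-5/2}\big)\,\mathrm{d}s \, = \, \frac{4}{3}\, \frac{V^{3/2}(\bx)}{\sqrt{b}},
\end{equation*}
while on $\{V=0\}$ the integrand is identically zero.

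Combining, one gets
\begin{equation*}
N(-\Delta_\Omega^D - V, E_1) \, \leq \, \frac{4\, C\, M_b}{3\, \sqrt{b}}\, \int_\Omega V^{3/2}(\bx)\,(1+\bx_3^2)\,\mathrm{d}\bx,
\end{equation*}
and it suffices to choose any $b>0$ (e.g.\ the minimizer of $M_b/\sqrt{b}$) to obtain a finite constant $L$. This yields \eqref{CLR-eq}.

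I do not foresee any genuine obstacle: the two key inputs (Lieb's inequality and the global on-diagonal bound \eqref{global}) are already in place, and the only delicate point is the integrability of the time integral near $t=0$, which is handled automatically by the cutoff in $(tV-b)_+$. The weight $(1+\bx_3^2)$ on the right-hand side is precisely the one produced by \eqref{global}, so the estimate is consistent with the weighted $L^{3/2}$-class for $V$ appearing in the statement.
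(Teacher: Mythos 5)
Your proposal is correct and is exactly the argument the paper has in mind: the theorem is presented as a ``direct application'' of the global bound \eqref{global} and Lieb's inequality \eqref{lieb}, and your explicit computation of the inner $t$-integral (with the change of variables $s=tV/b$ yielding the factor $\tfrac{4}{3}V^{3/2}/\sqrt{b}$) fills in precisely what that phrase leaves to the reader.
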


\begin{remark}
   Due to the criticality of  $-\Delta_{\Omega_0}^D - E_1$,    inequality \eqref{CLR-eq} fails in the absence of twisting since
$$
N(-\Delta_{\Omega_0}^D - \alpha\, V, E_1) \geq 1 \qquad \forall\,
\alpha >0
$$
provided $V\gneqq 0$ satisfies    the    assumptions of Theorem \ref{CLR}, see
  \cite{EW02,pt,roso}.    Note also that the bound \eqref{CLR-eq} has the
right semiclassical behavior since it is well-known (see e.g.
\cite{roso}) that
$$
N(-\Delta_\Omega^D - \alpha\, V, \, E_1) \ \asymp \
\alpha^{3/2}  \qquad \alpha\to\infty.
$$
\end{remark}

\begin{remark}
It also easy to see that the weight $(1+\bx_3^2)$ in \eqref{CLR-eq}
cannot be improved in the power-like scale. For if $V(\bx) \asymp
|\bx_3|^{-2+\varepsilon}$ as $|\bx| \to \infty$ with some $\varepsilon>0$, then a
standard test function argument, cf.\! \cite[Theorem~13.6]{rs4},  shows that
$$
N(-\Delta_{\Omega_0}^D - \alpha\, V, E_1) =\infty \qquad \forall\ \alpha >0.
$$
\end{remark}

\noindent Estimate \eqref{CLR-eq} in combination with Hardy inequality \eqref{hardy-operator} yield the following family of
weighted Sobolev inequalities,  which have no analogue in the straight tube $\Omega_0$.

\begin{theorem} \label{thm-sobolev}
For any $p\in [2,6]$ there exists a constant $C_p>0$ such that
\begin{equation} \label{sobolev-eq}
\int_{\Omega} \left(|\nabla u|^2-E_1\, |u|^2\right)\, \mathrm{d}\bx \ \geq \ C_p\
\Big( \int_{\Omega}  |u|^p \, (1+\bx_3^2)^{-\frac{p+2}{4}}\, \mathrm{d}\bx\Big)^{2/p}
\end{equation}
holds for all $u\in H^1_0(\Omega)$.
\end{theorem}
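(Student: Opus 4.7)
The plan is to prove the two endpoints $p=2$ and $p=6$ of \eqref{sobolev-eq} separately and then obtain the intermediate values by H\"older interpolation. Throughout, let $I[u] := \int_\Omega (|\nabla u|^2 - E_1 |u|^2)\, \mathrm{d}\bx$ denote the quadratic form of $-\Delta_\Omega^D - E_1$. The case $p=2$ — whose weight is exactly $(1+\bx_3^2)^{-1}$ — is nothing but the Hardy inequality \eqref{hardy-operator}.

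The case $p=6$ is obtained by dualizing the CLR bound \eqref{CLR-eq}. If $V\geq 0$ satisfies $L\int_\Omega V^{3/2}(1+\bx_3^2)\,\mathrm{d}\bx \leq 1$, then by \eqref{CLR-eq} the operator $-\Delta_\Omega^D - V$ has no discrete eigenvalue below $E_1$; together with the fact that its essential spectrum remains in $[E_1,\infty)$ (via a standard compactly-supported truncation of $V$), this yields $I[u]\geq \int_\Omega V|u|^2\,\mathrm{d}\bx$ for every $u\in H^1_0(\Omega)$. Homogenizing and optimizing over admissible $V$ — the extremizer of the underlying H\"older problem is $V = \lambda\,|u|^4(1+\bx_3^2)^{-2}$ with $\lambda$ fixed by the constraint — gives
\begin{equation*}
I[u]\ \geq\ L^{-2/3}\left(\int_\Omega |u|^6\,(1+\bx_3^2)^{-2}\,\mathrm{d}\bx\right)^{1/3}.
\end{equation*}

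For $p\in(2,6)$, set $\alpha := (p-2)/4\in(0,1)$ and note the algebraic identity $(p+2)/4 = (1-\alpha) + 2\alpha$. H\"older's inequality with conjugate exponents $1/(1-\alpha)$ and $1/\alpha$ then gives
\begin{equation*}
\int_\Omega |u|^p (1+\bx_3^2)^{-(p+2)/4}\,\mathrm{d}\bx\ \leq\ \left(\int_\Omega |u|^2 (1+\bx_3^2)^{-1}\right)^{1-\alpha} \left(\int_\Omega |u|^6 (1+\bx_3^2)^{-2}\right)^{\alpha}.
\end{equation*}
Inserting the two endpoint bounds controls the right-hand side by $C\,I[u]^{(1-\alpha)+3\alpha} = C\,I[u]^{p/2}$; taking the $2/p$-th power yields \eqref{sobolev-eq}.

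The one delicate point is the dualization used for the $p=6$ endpoint: one must know that CLR, which a priori bounds only the number of \emph{discrete} eigenvalues strictly below $E_1$, in fact implies the form comparison $-\Delta_\Omega^D - E_1 \geq V$. This requires that the essential spectrum of $-\Delta_\Omega^D - V$ remain in $[E_1,\infty)$ for every $V\geq 0$ with $\int V^{3/2}(1+\bx_3^2)<\infty$, which is most easily obtained by approximating $V$ by compactly supported truncations $V_n \nearrow V$ (for which $\sigma_{\mathrm{ess}}$ is preserved by a Weyl-type argument), applying the argument above to each $V_n$, and passing to the monotone limit in the resulting form inequality.
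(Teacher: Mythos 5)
Your proof is correct and follows essentially the same route as the paper: dualize the CLR bound \eqref{CLR-eq} (via the Frank--Lieb--Seiringer mechanism) to obtain the $p=6$ endpoint, use the Hardy inequality \eqref{hardy-operator} for $p=2$, and interpolate by H\"older. The only variation is that you spell out the step from ``no discrete eigenvalue below $E_1$'' to the form inequality $-\Delta^D_\Omega-E_1\geq V$ by a truncation/Weyl argument, whereas the paper simply asserts the implication \eqref{cond}$\Rightarrow$\eqref{impl} citing \cite{fls}; that extra care is welcome but does not change the argument.
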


\begin{proof}
First we mimic the argument used in \cite{fls} and note that by
\eqref{CLR-eq}
\begin{align}
& L \int_\Omega\, V^{3/2}(\bx)\, (1+\bx_3^2)\, \mathrm{d}\bx  <1 \, \,
\label{cond}  \\
& \qquad \qquad\qquad\qquad \Big\Downarrow  \nonumber \\
&\int_\Omega\, \left(|\nabla u|^2 -E_1\, |u|^2-V |u|^2\right) \mathrm{d}\bx \geq 0 \qquad  \forall\,
u\in H^1_0(\Omega). \label{impl}
\end{align}
Let $u\in H^1_0(\Omega)$. Choosing
$$
V(\bx)= \eta\, |u|^4\, (1+\bx_3^2)^{-2} \, \Big(L \int_\Omega\, |u|^6\,
(1+\bx_3^2)^{-2}\, \mathrm{d}\bx\Big)^{-\frac 23}
$$
with $\eta<1$ we see that \eqref{cond} is satisfied and \eqref{impl}
gives
$$
\int_{\Omega} \left(|\nabla u|^2-E_1\, |u|^2\right)\, \mathrm{d}\bx\,  \geq\,  C\,
\Big( \int_{\Omega}  |u|^6 \, (1+\bx_3^2)^{-2}\, \mathrm{d}\bx\Big)^{1/3}
$$
for some $C>0$. This together with H\"older inequality and \eqref{hardy-operator} implies
\begin{align*}
\int_{\Omega}  |u|^p \, (1+\bx_3^2)^{-\frac{p+2}{4}}\, \mathrm{d}\bx\, & \leq\,  \Big( \int_{\Omega}  |u|^6 \, (1+\bx_3^2)^{-2}\, \mathrm{d}\bx\Big)^{\frac{p-2}{4}}\, \Big( \int_{\Omega}  |u|^2 \, (1+\bx_3^2)^{-1}\, \mathrm{d}\bx\Big)^{\frac{6-p} {4}} \\
& \leq \, c\  \Big( \int_{\Omega} \left(|\nabla u|^2-E_1\, |u|^2\right)\, \mathrm{d}\bx \Big )^{\frac p2},
\end{align*}
as claimed.
\end{proof}

\noindent Let us define the sequence of functions
$g_n:\R\to\R$ by
$$
g_n(s) := \left \{
\begin {array}{lr}
1-\frac 1n\,  (s+R),  &    \quad -R-n \leq s < -R,  \\[2mm]
1, & \quad -R \leq s \leq R, \\[2mm]
1 -\frac 1n\,  (s-R), &  \quad  R < s < R+n,
\end {array}
\right.
$$
and $g_n=0$ otherwise, recalling that $\mathrm{supp}\, \dot\theta\subset
(-R,R)$. We make  the  following observations.

\begin{remark}
   Due to the criticality of $-\Delta^D_{\Omega_0}-E_1$,  inequality \eqref{sobolev-eq} fails if $\dot\theta\equiv 0$ \cite{pt09}.    Indeed, the
choice $u_n(x)= \psi_1(x_1,x_2)\, g_n(x_3)$
gives $u_n \in H^1_0(\Omega_0)$ and
$$
\int_{\Omega_0} \left(|\nabla u_n|^2-E_1\,   |u_n|^2  \right)\, {\rm d}x =
\int_\R |g'_n(x_3)|^2\, {\rm d}x_3 = \mathcal{O}(n^{-1})\qquad n\to\infty,
$$
while
$$
\int_{\Omega_0}  |u_n|^p \, (1+x_3^2)^{-\frac{p+2}{4}}\,  {\rm d}x \to \int_\R
(1+x_3^2)^{-\frac{p+2}{4}}\, {\rm d}x_3 \qquad n\to\infty
$$
by the monotone convergence theorem. This will be in contradiction with
\eqref{sobolev-eq} if we  replace  $\Omega$ with $\Omega_0$.
\end{remark}

\begin{remark} \label{optim}
  The decay rate    of the weight in
the integral on the right hand side of \eqref{sobolev-eq} cannot be
improved in the power-like scale. In other words, the inequality
\begin{equation}\label{sobol-modif}
\int_{\Omega} \left(|\nabla u|^2-E_1 |u|^2\right)\, \mathrm{d}\bx\  \geq \ C\, \Big(
\int_{\Omega}  |u|^p \, (1+\bx_3^2)^{-\gamma}\, \mathrm{d}\bx\Big)^{\frac 2p}
\qquad \forall\, u\in H^1_0(\Omega)
\end{equation}
fails whenever $\gamma <(p+2)/4$. To see this we use the sequence of test
functions $u_n(\bx)= v_0(r_\theta^{-1} \bx)\, g_n(\bx_3)$. Then
$u_n\in H^1_0(\Omega)$ and using \eqref{gr-eq} we get
$$
\int_{\Omega} \left(|\nabla u_n|^2-E_1\, |u_n|^2\right)\, \mathrm{d}\bx \, \leq\,
C \int_\R (1+\bx_3^2)\ |g'_n(\bx_3)|^2\, \mathrm{d}\bx_3 = \mathcal{O}(n),
$$
and
$$
\Big( \int_{\Omega}  |u_n|^p \, (1+\bx_3^2)^{-\gamma}\,
\mathrm{d}\bx\Big)^{\frac 2p} \
\geq \ C\, n^{\frac{2(p+1-2\gamma)}{p}} \, +
o\big(n^{\frac{2(p+1-2\gamma)}{p}}\big)
$$
as $n\to\infty$. Hence from  \eqref{sobol-modif} it follows that
$\gamma  \geq (p+2)/4$.
\end{remark}



\medskip

\begin{center}{\bf Appendix} \end{center}

\begin{appendix}
\section{One-dimensional Schr\"odinger operators}
\label{sect-1dim}

\noindent In this section we prove an auxiliary result concerning the semigroup generated by the nonnegative operator
\begin{equation*}
P = -\frac{d^2}{dr^2}\, + \dot\theta^2(r) \quad \text{in\, } L^2(\R).
\end{equation*}

\begin{proposition}[cf. {\cite[Theorem~4.2]{m84}}] \label{prop-app}
There exists a constant $c$ such that
\begin{equation} \label{1dim-appendix}
q(t,r,r):= \mathrm{e}^{-t P}(r,r)  \, \leq \, c\ \min\Big\{\frac{g_0^2(r)}{t^{3/2}}\, , \, \frac{1}{\sqrt{t}}\Big\} \qquad \forall\ t >0,
\end{equation}
where $g_0$ is given by \eqref{cauchy}.
\end{proposition}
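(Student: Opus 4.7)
The estimate splits into two bounds. The bound $q(t,r,r)\le c/\sqrt{t}$ is immediate from semigroup domination: since $\dot\theta^2\ge 0$ one has $P\ge -\partial_r^2$ in the quadratic form sense, hence $q(t,r,s)\le (4\pi t)^{-1/2}\mathrm{e}^{-(r-s)^2/(4t)}$ (equivalently, by a Feynman-Kac argument), and specializing $s=r$ yields the claim.

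For the sharper bound $q(t,r,r)\le c\,g_0^2(r)/t^{3/2}$ I would apply the ground-state transformation with $g_0$. Since $Pg_0=0$ and $g_0>0$, the operator $\tilde P := -g_0^{-2}\partial_r(g_0^2\partial_r\,\cdot\,)$ is nonnegative and self-adjoint on $L^2(\R,g_0^2\,\mathrm{d}r)$ with closed quadratic form $\tilde Q(f)=\int_\R|f'|^2 g_0^2\,\mathrm{d}r$, and its heat kernel is related to the original one by $\tilde q(t,r,s)=q(t,r,s)/(g_0(r)g_0(s))$. Hence the desired bound is equivalent to the ultracontractive estimate $\|\mathrm{e}^{-t\tilde P}\|_{L^1(g_0^2)\to L^\infty}\le c/t^{3/2}$, which I would deduce from the weighted Nash inequality with effective dimension three,
\begin{equation}\label{nash-plan}
\|f\|_{L^2(g_0^2)}^{10/3}\le C\,\tilde Q(f)\,\|f\|_{L^1(g_0^2)}^{4/3},\qquad f\in C_c^\infty(\R),
\end{equation}
by invoking Coulhon's equivalence \cite[Proposition~II.1]{cou} -- the same tool used repeatedly in Section~\ref{sec-upperb}.

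The exponent $10/3$ is dictated by the volume growth of the measure $\mathrm{d}\mu:=g_0^2\,\mathrm{d}r\asymp(1+|r|)^2\,\mathrm{d}r$, which satisfies $\mu(B(0,R))\asymp R^3$ for $R\ge 1$; in this sense the weighted line is three-dimensional at large scales. To prove \eqref{nash-plan} I would treat two regimes. For $f$ supported in $\{|r|\ge 1\}$, the radial extension $F(x):=f(\mathrm{sgn}(x_1)(|x|-1))$ defined on $\R^3\setminus B(0,1)$ converts the weighted norms and Dirichlet form into the Euclidean $L^1$, $L^2$, and $\dot H^1$ norms of the radial function $F$ (up to constants from $(1+|r|)^2\asymp|x|^2$), so the classical 3D Nash inequality applies directly. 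For $f$ supported in $\{|r|\le 2\}$ the measure is comparable with Lebesgue measure, and a local 1D Sobolev/Nash estimate suffices. A smooth partition of unity patches the two regimes, with the commutator cost handled via $\int|\chi'|^2|f|^2 g_0^2\,\mathrm{d}r\lesssim\|f\|_{L^2(g_0^2)}^2$.

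The main obstacle is this gluing step: combining the essentially one-dimensional local behaviour near the origin with the three-dimensional behaviour at infinity into a single global inequality with the correct exponent requires the patching to respect both scales simultaneously. Once \eqref{nash-plan} is established, Coulhon's theorem yields $\tilde q(t,r,r)\le c/t^{3/2}$, and unwrapping the ground-state transformation together with the trivial bound from the first paragraph produces \eqref{1dim-appendix}.
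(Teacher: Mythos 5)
Your overall architecture matches the paper's: the easy bound $q(t,r,r)\le c/\sqrt t$ by comparison with the free line kernel (the paper uses the Trotter product formula, you use domination -- same content), then the ground-state transformation $\tilde P=g_0^{-1}Pg_0$ on $L^2(\R,g_0^2\,\mathrm dr)$ and an ultracontractivity estimate $\|\mathrm e^{-t\tilde P}\|_{1\to\infty}\le ct^{-3/2}$ derived from a ``dimension-three'' functional inequality on the weighted line. The paper phrases that last step as a weighted $L^6$-Sobolev inequality plus Varopoulos' theorem (\cite[Theorem~2.4.2]{da}); you phrase it as the equivalent weighted Nash inequality plus Coulhon -- essentially the same equivalence, just in a different dress.

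Where the two genuinely diverge is in how the weighted inequality itself is proved, and here your plan has real gaps whereas the paper's argument sidesteps them. The paper proves the Sobolev inequality directly on the half-line $(0,\infty)$ with weight $(1+r)^2$: an exact integration-by-parts identity \eqref{hardy-app} produces $\int_0^\infty|u|^2\,\mathrm dr$ out of $\int_0^\infty|u'|^2(1+r)^2\,\mathrm dr$, this plus $\int_0^\infty|u'|^2\,\mathrm dr$ controls $\|u\|_\infty^2$ and hence $\bigl(\int_0^1|u|^6\bigr)^{1/3}$, while $(1+r)^2\ge r^2$ lets one invoke the \emph{full-space} radial Sobolev inequality on $\R^3$ for the large-$r$ contribution; there is no cutoff and no gluing. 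Your partition-of-unity plan, by contrast, faces three concrete problems you would have to resolve. First, the radial extension $F(x):=f(\operatorname{sgn}(x_1)(|x|-1))$ is discontinuous across the hyperplane $\{x_1=0\}$ (for $|x|\ge2$ the two one-sided limits are $f(|x|-1)$ and $f(1-|x|)$) unless $f$ is even, so $F\notin H^1$; you would need to split into even/odd parts or treat the two half-lines separately. Second, the resulting $F$ lives on $\R^3\setminus B(0,1)$ with non-zero boundary trace, and the sharp gradient-only Sobolev inequality is not available on such exterior domains -- the paper avoids this entirely by mapping $r\mapsto|x|=r$ onto all of $\R^3$ and paying only the harmless factor $(1+r)^2\ge r^2$. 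Third, and most seriously, the commutator estimate $\int|\chi'|^2|f|^2g_0^2\lesssim\|f\|_{L^2(g_0^2)}^2$ inserts an additive $\|f\|_2^2$ into the right-hand side of the Nash inequality; a Nash inequality with such a lower-order term only yields $\|\mathrm e^{-t\tilde P}\|_{1\to\infty}\lesssim \mathrm e^{ct}t^{-3/2}$, which is useless for large $t$. To make the gluing work you would have to control the transition-region $L^2$ mass by $\tilde Q(f)^{1/2}\|f\|_1$ (e.g.\ via the 1D Gagliardo--Nirenberg bound $\|f\|_\infty^2\lesssim\|f'\|_2\|f\|_1$) and absorb it, which is exactly the kind of estimate the paper's identity \eqref{hardy-app} delivers for free. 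Finally, your near-origin piece cannot be handled by ``a local 1D Sobolev/Nash estimate'' alone: the 1D Nash exponent is $6$, not $10/3$, so there is a mismatch that requires the same $L^\infty$-interpolation trick. In short: same strategy, equivalent functional-inequality framework, but the paper's direct proof of the weighted Sobolev inequality avoids precisely the obstacles you correctly flag as the hard part of your plan.
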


\begin{proof}
One estimate follows immediately by the Trotter product formula:
$$
\mathrm{e}^{-t P}(r,r)  \, \leq \, \exp \Big (\displaystyle{\,t \frac{d^2}{dr^2}}\Big) (r,r) = (4\pi t)^{-1/2} .
$$
To prove the remaining part of \eqref{1dim-appendix}       we note that
since $P g_0 = 0$, the operator $\widetilde P :=  g_0^{-1}\, P\, g_0$ in $L^2(\R, g_0^2(r) \mathrm{d}r)$ is associated with the quadratic form
$$
\int_\R |f'(r)|^2\, g_0^2(r)\, \mathrm{d}r, \qquad f\in H^1(\R, g_0^2(r)\, \mathrm{d}r),
$$
and the corresponding semigroup $\mathrm{e}^{-t \widetilde P}$ satisfies
$$
\mathrm{e}^{-t P}(r,r') = g_0(r)\, g_0(r')\ \mathrm{e}^{-t \widetilde P}(r,r').
$$
Hence it suffices to show that
\begin{equation} \label{aux-app}
\sup_{r>0}\, \mathrm{e}^{-t \widetilde P}(r,r)\, \leq \, c\,t^{-3/2} \qquad \forall \ t >0.
\end{equation}
By the well-known Theorem of Varopoulos, see e.g. \cite[Theorem~2.4.2]{da}, estimate \eqref{aux-app} will follow from the Sobolev inequality
\begin{equation} \label{sob-app}
\int_\R |f'(r)|^2\, g_0^2(r)\, \mathrm{d}r \, \geq \, c_s\, \Big(\int_\R |f(r)|^6\, g_0^2(r)\, \mathrm{d}r\Big)^{1/3} \qquad \forall\ f\in H^1(\R, g_0^2(r)\, \mathrm{d}r).
\end{equation}
To prove \eqref{sob-app} we consider a function $u\in H^1(\R_+, (1+r)^2\, \mathrm{d}r)$. Integration by parts yields the identity
\begin{equation} \label{hardy-app}
\int_0^\infty \Big(u' +\frac{u}{2(1+r)}\Big)^2\, (1+r)^2\, \mathrm{d}r = \int_0^\infty |u'|^2 (1+r)^2\, \mathrm{d}r -\frac{u^2(0)}{2} - \frac 14\, \int_0^\infty |u|^2\, \mathrm{d}r.
\end{equation}
Moreover,
$$
|u(r)|^2 = -2    \int_r^\infty    u'(s)\, u(s)\, \mathrm{d}s \, \leq \int_0^\infty |u'(r)|^2\, \mathrm{d}r+\int_0^\infty |u(r)|^2\, \mathrm{d}r.
$$
This in combination with \eqref{hardy-app} and the H\"older inequality gives
\begin{align} \label{sob-1}
\int_0^\infty |u'|^2 (1+r)^2\, \mathrm{d}r \, \geq \,    \frac 18  \int_0^\infty |u'|^2\, \mathrm{d}r + \frac 12 \int_0^\infty |u'|^2 (1+r)^2\, \mathrm{d}r    \nonumber \\[3mm]
\geq \frac 18  \int_0^\infty |u'|^2\, \mathrm{d}r + \frac 18 \int_0^\infty |u|^2\, \mathrm{d}r \geq \, \frac 18 \|u\|_\infty^2 \geq\, \frac 18\, \Big(\int_0^1 |u|^6\, \mathrm{d}r\Big)^{1/3}.
\end{align}
On the other hand, restriction of the standard Sobolev inequality in $\R^3$ with the critical exponent $q=6$ onto the subspace of radial functions gives
\begin{equation} \label{sob-2}
\int_0^\infty |u'|^2 \, r^2\, \mathrm{d}r \, \geq \, \tilde c\ \Big(\int_0^\infty |u|^6\, r^2\, \mathrm{d}r\Big)^{1/3}.
\end{equation}
Hence inequality \eqref{sob-app} follows from \eqref{sob-1}, \eqref{sob-2} and from the fact that $g_0^2(r) \asymp (1+|r|)^2 \asymp 1+r^2 $ on $\R$.
\end{proof}


\section{Properties of the functions $\xi_\lambda$ and $\vartheta_\lambda$}
\label{sect-xi}

\begin{lemma} \label{xi-lem}
Let $\xi,\vartheta$ be the functions defined by \eqref{xi}, \eqref{theta}.  For any
$\kappa>0$ there exists a constant $c_\kappa$ such that for all
$r>0$ and all  $\gl\geq 1 $   it holds
\begin{equation} \label{xi-estim}
\xi_\lambda(\kappa\, r) \leq C_\kappa\, \xi_\lambda(r), \qquad \vartheta_\lambda(\kappa\, r) \leq C_\kappa\, \vartheta_\lambda(r).
\end{equation}
\end{lemma}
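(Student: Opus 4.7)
The plan is to reduce both inequalities in \eqref{xi-estim} to a single, $\lambda$-independent doubling property of the functions $\xi_1$ and $\vartheta_1$ by means of an explicit scaling relation. Indeed, inspecting \eqref{m} shows that $m_\lambda(t) = \lambda\, m_1(t)$, where $m_1$ is the fixed $C^1$ decreasing bijection of $\R_+$ onto itself obtained from \eqref{m} with $\lambda=1$ and the same interpolating function $\chi$. Hence $m_\lambda^{-1}(r) = m_1^{-1}(r/\lambda)$ and $m_\lambda'(t) = \lambda m_1'(t)$, which, plugged into \eqref{xi}, yields the homogeneity relation
$$
\xi_\lambda(r) \,=\, -\lambda\, m_1'\!\bigl(m_1^{-1}(r/\lambda)\bigr) \,=\, \lambda\, \xi_1(r/\lambda), \qquad r>0,\ \lambda>0.
$$
The ratio $\xi_\lambda(\kappa r)/\xi_\lambda(r)$ therefore equals $\xi_1(\kappa s)/\xi_1(s)$ with $s=r/\lambda$, so the first inequality in \eqref{xi-estim} reduces to a uniform bound $\xi_1(\kappa s) \leq C_\kappa \xi_1(s)$ for all $s>0$, with a constant $C_\kappa$ independent of $\lambda$.

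Next I would use the explicit form of $m_1$ to pin down the behavior of $\xi_1$ at the two endpoints. For $t\geq 1$ the relation $m_1(t)=t^{-3/2}$ yields $\xi_1(s) = \tfrac{3}{2}\, s^{5/3}$ for all sufficiently small $s$, while for $t\leq 1/2$ the relation $m_1(t)=t^{-5/2}$ yields $\xi_1(s) = \tfrac{5}{2}\, s^{7/5}$ for all sufficiently large $s$. In particular
$$
\lim_{s\to 0^+}\, \frac{\xi_1(\kappa s)}{\xi_1(s)}=\kappa^{5/3}, \qquad \lim_{s\to\infty}\, \frac{\xi_1(\kappa s)}{\xi_1(s)}=\kappa^{7/5}.
$$
Since $m_1$ is a $C^1$ strictly decreasing bijection of $\R_+$, the derivative $m_1'$ is continuous and nonvanishing, so $\xi_1$ is continuous and strictly positive on $(0,\infty)$. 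Consequently the function $s\mapsto \xi_1(\kappa s)/\xi_1(s)$ is continuous, strictly positive, and admits finite limits at both ends of $\R_+$, hence it is bounded from above by some $C_\kappa<\infty$. Rescaling via the homogeneity relation then gives the first estimate in \eqref{xi-estim} for every $\lambda>0$ (so in particular for $\lambda\geq 1$).

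The argument for $\vartheta_\lambda$ is word for word the same: from \eqref{mu} one gets $\mu_\lambda=\lambda\mu_1$, whence $\vartheta_\lambda(r)=\lambda\vartheta_1(r/\lambda)$, and the asymptotics $\vartheta_1(s)\sim \tfrac12 s^3$ as $s\to 0^+$ and $\vartheta_1(s)\sim \tfrac52 s^{7/5}$ as $s\to\infty$ produced by the $t^{-1/2}$ and $t^{-5/2}$ branches of $\mu_1$ reduce the matter to the boundedness of a continuous positive function on $\R_+$ with finite endpoint limits. There is no genuine obstacle here; the only point requiring a bit of care is the verification that the coefficient $\chi$ in the interpolation region may indeed be chosen independently of $\lambda$, which follows at once from the fact that $\chi$ in \eqref{m} and \eqref{mu} is defined only to glue $C^1$ the two pure power branches, an operation that commutes with the overall multiplicative factor $\lambda$.
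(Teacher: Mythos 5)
Your proof is correct and rests on the same explicit piecewise-power structure of $m_\lambda$ that underlies the paper's argument, but it packages the verification differently, and more cleanly. The paper simply writes out the explicit three-piece formula for $\xi_\lambda(r)$ and $\vartheta_\lambda(r)$ (with the $\lambda$-dependence carried through each piece) and then declares the doubling inequality ``directly verified,'' leaving the reader to check the power behaviour on each piece and the regime crossings by hand. You instead extract the scaling identity $\xi_\lambda(r)=\lambda\,\xi_1(r/\lambda)$ once and for all, reduce the uniform-in-$\lambda$ claim to a single $\lambda$-free statement about $\xi_1$, compute the endpoint exponents $5/3$ and $7/5$ (resp.\ $3$ and $7/5$), and invoke continuity plus finite limits at $0$ and $\infty$ to bound the ratio. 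This makes the uniformity in $\lambda\geq 1$ transparent and automatically disposes of the annoying case analysis when $r$ and $\kappa r$ fall into different pieces of the definition, which the paper glosses over.

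One small point worth tightening: the implication ``$m_1$ is $C^1$ and strictly decreasing, hence $m_1'$ is nonvanishing'' is not valid in general (consider $-t^3$ near $0$). Here it does hold, but for a concrete reason: on the two power branches $m_1'$ is manifestly negative, and on $(1/2,1)$ the convexity of $\chi$ forces $\chi'$ to be nondecreasing, so $\chi'(t)\le\chi'(1)=-3/2<0$. Spelling this out would close the only soft spot in an otherwise tight argument.
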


\begin{proof}
Since $\xi_\lambda$ is increasing, we may assume that $\kappa>1$. A
straightforward calculation gives
$$
\xi_\lambda(r)  = \left\{
\begin{array}{l@{\qquad }cl}
\frac 32\, \lambda^{-\frac 23}\, r^{\frac 53} , & 0 < r \leq \lambda
\, ,
\\ & \\
-\lambda \, \chi'(\chi^{-1}(r/\lambda )) , &  \lambda < r < 2^{\frac 52}\, \lambda, \\
&\\
\frac 52\, \lambda^{-\frac 25}\, r^{\frac 75} , &  2^{\frac 52}\,
\lambda \leq r < \infty,
\end{array} \right.
$$
and
$$
 \
\vartheta_\lambda(r)  = \left\{
\begin{array}{l@{\qquad }cl}
\frac 12\, \lambda^{-2}\, r^3 , & 0 < r \leq \lambda
\, ,
\\ & \\
-\lambda \, \tilde\chi'(\tilde\chi^{-1}(r/\lambda )) , &  \lambda < r < 2^{\frac 52}\, \lambda, \\
&\\
\frac 52\, \lambda^{-\frac 25}\, r^{\frac 75} , &  2^{\frac 52}\,
\lambda \leq r < \infty.
\end{array} \right.
$$
\noindent It can be now directly verified that $\xi_\lambda$ and $\vartheta_\lambda$ satisfy
\eqref{xi-estim}.
\end{proof}

\section{Remark on Davies' conjecture}

Large time behaviors of the heat kernel and in particular sharp two-sided heat kernel estimates are closely related to the following conjecture.
\begin{conjecture}[{Davies' Conjecture \cite{da97}}]\label{conjD}
Let $M$ be a noncompact Riemannian manifold, and consider a linear, time independent, second-order parabolic operator of the form $$u_t+P(x, \partial_x)u$$  which is
defined on $M$. Assume that $E_1=E_1(P,M)$,  the generalized principal eigenvalue of the elliptic operator $P$ in $M$, is nonnegative. Let $k_P^M(x,y,t)$ be the corresponding positive minimal heat kernel. Fix reference
points $x_0, y_0\!\in\! M$.

Then
\begin{equation}\label{eqconjD}
\lim_{t\to\infty}\frac{k_P^M(x,y,t)}{k_P^M(x_0,y_0,t)}=a(x,y)
\end{equation}
exists and is positive for all $x,y\in M$ (see also \cite{p06,fkp} and the references therein).

\end{conjecture}
Recall that Davies' conjecture holds if $P-E_1$ is critical in $M$ and the product of the corresponding ground states is in $L^1(M)$. Moreover, it holds true in the {\em symmetric} case
if the cone of all positive solutions of the equation $(P-E_1)u=0$ that vanish on $\partial M$ is one-dimensional. Hence, it holds true for a critical symmetric operator. In particular,
\begin{equation*}\label{eqconjD4}
\lim_{t\to\infty}\frac{\mathrm{e}^{t \Delta^D_{\Omega_0}}(x,y)}{\mathrm{e}^{t \Delta^D_{\Omega_0}}(0,0)}=C\psi_1(x_1,x_2)\psi_1(y_1,y_2)
\end{equation*}

In the following remark we consider Davies' conjecture  in the present situation.
\begin{remark}\label{rem-davies}
It follows from \cite[Theorem~4.2]{m84} that Davies' conjecture holds true for Schr\"odinger operators on $\R$ provided the potential satisfies Murata's assumptions in \cite{m84}.   Clearly,
\begin{equation}\label{limit2}
\lim_{t\to\infty} \Big(\sum_{j=1}^\infty\, \mathrm{e}^{t(E_1-E_j)}\, \psi_j(x_1,x_2)\psi_j(y_1,y_2)\Big)=\psi_1(x_1,x_2)\psi_1(y_1,y_2).
   \end{equation}
Using the heat kernel  decomposition \eqref{hk-decomp}, \eqref{limit2} and \cite[Theorem~4.2]{m84}, it follows that
Davies' conjecture holds true for the operator $A$ on $\Gw_0$, where $A$ is the subcritical operator defined by \eqref{model}. The validity of Davies's conjecture for operators $L$ satisfying the assumptions of Theorem~\ref{thm-generalization} and for the Laplacian on a twisted tube remains open.
\end{remark}


%
%
\end{appendix}

\section*{acknowledgements}
The authors wish to thank the anonymous referee for his/her valuable and
detailed remarks and suggestions.



\end{document}